\documentclass[a4paper,12pt]{amsart}

\usepackage{amsmath,amsthm,amscd,mathrsfs }

\usepackage{amssymb}
\usepackage[all,cmtip]{xy}
\makeatletter
\newcommand\xleftrightarrow[2][]{\ext@arrow 0099{\longleftrightarrowfill@}{#1}{#2}}
\def\longleftrightarrowfill@{\arrowfill@\leftarrow\relbar\rightarrow}
\makeatother

\newtheorem{Theorem}{Theorem}[section]
\newtheorem{Lemma}[Theorem]{Lemma}
\newtheorem{Corollary}[Theorem]{Corollary}
\newtheorem{Proposition}[Theorem]{Proposition}
\newtheorem{Remark}[Theorem]{Remark}
\newtheorem{Notation}[Theorem]{Notation}

\newtheorem{Definition}[Theorem]{Definition}

\newtheorem{Fact}[Theorem]{Fact}

\newcommand{\Hom}{\mathrm{Hom}}

   \DeclareMathOperator{\Assh}{Assh}

\DeclareMathOperator{\depth}{depth}  
 \DeclareMathOperator{\Ker}{Ker} 
\DeclareMathOperator{\Image}{Im} \DeclareMathOperator{\Ass}{Ass} \DeclareMathOperator{\Supp}{Supp} 
   \DeclareMathOperator{\Spec}{Spec}
\DeclareMathOperator{\Gpd}{Gpd}       \DeclareMathOperator{\Gid}{Gid}
\DeclareMathOperator{\amp}{amp}  \DeclareMathOperator{\width}{width}   \DeclareMathOperator{\Ho}{H}   

 \DeclareMathOperator{\id}{id}  
 
 \DeclareMathOperator{\Co}{C}     \DeclareMathOperator{\bas}{B}
\DeclareMathOperator{\Aus}{A}   \DeclareMathOperator{\G}{G}      
  \DeclareMathOperator{\cmd}{cmd}     
        
  \DeclareMathOperator{\Max}{Max}  
\DeclareMathOperator{\E}{E}

\usepackage{amsmath, amsthm}
\usepackage{chemarr}

\setlength{\textwidth}{13.8cm}
\setlength{\textheight}{20cm}

\title
{G-Gorenstein complexes}

\author{Maryam Akhavin}
\author{Eero Hyry}

\address{
Mathematics and Statistics\\
School of Information Sciences\\ 
University of Tampere\\
FIN-33014 Tampereen yliopisto\\ 
Finland}

\email{maryam.akhavin@uta.fi}
\email{eero.hyry@uta.fi}

\begin{document}

\begin{abstract}
We present in the context of Gorenstein homological
algebra the notion of a ``G-Gorenstein complex'' as the counterpart of
the classical notion of a Gorenstein complex. In particular, we investigate equivalences between 
the category of G-Gorenstein complexes of fixed dimension and the $G$-class of modules.  

\end{abstract}

\maketitle

\section{Introduction}

Gorenstein homological algebra is the relative version of homological algebra, where classical injective and projective modules are replaced by Gorenstein injective and Gorenstein projective modules, respectively. The study of Gorenstein homological algebra goes back to Auslander and Bridger. They introduced the notion of a Gorenstein dimension of a finitely generated module over a commutative Noetherian ring 
(see~\cite{AB}). Gorenstein dimension characterizes Gorenstein rings like projective dimension does for regular rings. In order to extend this theory to arbitrary modules, Enochs and Jenda defined the notions of a Gorenstein projective and Gorenstein injective module (see~\cite{enjemz}). 
 
Gorenstein complexes, defined by Grothendieck in~\cite{hart}, play a crucial role in his theory of duality. Sharp initiated in~\cite{sharp} the study of Gorenstein modules from the point of view of commutative algebra. Following the maxim that every result in classical homological algebra has a counterpart in Gorenstein homological algebra, as suggested by Holm in~\cite{hthesis}, the purpose of this article is to introduce an analogue of the notion of a Gorenstein complex in the context of Gorenstein homological algebra. We can extend several properties of Gorenstein modules proved by Sharp to the case of $G$-Gorenstein complexes. Our work generalizes that of Aghajani and Zakeri who introduced in~\cite{zagh} the notion of a G-Gorenstein module (see also~\cite{khashayar}).

Let $R$ be a commutative Noetherian ring. The derived category of bounded complexes of $R$-modules with finitely generated homology is denoted by $D^f_b(R)$. Generalizing the definition of a Gorenstein complex given in~\cite{hart} we define a complex $M\in D^f_b(R)$ to be G-Gorenstein if it is Cohen-Macaulay and the local cohomology modules $\Ho^i_{pR_p}(M_p)$ are Gorenstein injective for all $i\in \mathbb Z$ and $p\in\Spec R$. 

From now on we assume that $(R,m)$ is a local ring admitting a dualizing complex. It  comes out in Proposition~\ref{d=gi} that the $G$-Gorensteiness of $M$ is equivalent to $\dim_R M =\depth_R M=\Gid_R M$. This is further equivalent to $M$ being of finite Gorenstein injective dimension and having $\depth_RM=\depth R-\inf M$. Recall the open question concerning the analogue of Bass's theorem in Gorenstein homological algebra: Does the existence of an $R$-module of finite Gorenstein injective dimension imply that $R$ is Cohen-Macaulay (see~\cite[Question 3.26]{CFH})? Regarding this question we point out in Corollary~\ref{s2cm} that if $R$ satisfies Serre's condition $S_2$, then the existence of a $G$-Gorenstein module always implies that $R$ is Cohen-Macaulay. 

If $M\in D^f_b(R)$ is a complex of finite Gorenstein injective dimension, then the biduality morphism $L\longrightarrow {\bf R}\Hom_R({\bf R}\Hom_R(L,M),M)$ cannot be an isomorphism for $L\in D^f_b(R)$ unless 
$M$ is a dualizing complex. This was observed by Christensen in~\cite[Proposition 8.4]{Auscat}. Nevertheless, it turns out that if $M$ is $G$-Gorenstein, then biduality preserves depth. In fact, we prove
in our first main result Theorem~\ref{2} that among complexes of finite Gorenstein injective dimension $G$-Gorenstein complexes are characterized by the equality $$\depth {\bf R}\Hom_R({\bf R}\Hom_R(L,M),M)= \depth L$$ for all complexes $L\in D^f_b(R)$ of finite projective or injective dimension.

Let $M\in D^f_b(R)$. Our Theorem~\ref{asly1} and Theorem~\ref{asly2} show that the following conditions are equivalent:
\begin {itemize}
\item[(1)] $M$ is a G-Gorenstein complex of dimension $t$;
\item[(2)] $M\simeq \Hom_R(K,\Sigma^{-t}D_R)$ for some $K\in G(R);$
\item[(3)]$M\simeq\Sigma^{-t}D_R\otimes_RN $ for some $N\in G(R);$ 
\item[(4)] ${\bf R}\Hom_R( \Sigma^{-t}D_R,M)\simeq N$ for some $N\in G(R).$
\end{itemize}
Here $D_R$ denotes the dualizing complex normalized with $\sup D_R=\dim R$ and $G(R)$ is the G-class of modules. As usual, the symbol ``$\simeq$'' indicates an isomorphism in $D(R)$.

Let $D_{t-GGor}(R)$ denote the full subcategory of $D^f_b(R)$ of $G$-Gorenstein complexes of dimension $t$. In more abstract terms, we can then say that there is a diagram
\[
\xymatrixcolsep{10pc}
\xymatrixrowsep{4pc}
\xymatrix{ 
D_{t-GGor}(R)
\ar@<1ex>[r]^{H_t({\bf R}\Hom_R(-,D_R))}
\ar[d]_{}
& \G(R)^{\mathrm{opp}}
\ar@<1ex>[l]^{\Sigma^{-t}{\bf R}\Hom_R(-,D_R)} 
\ar[d]^{\Hom_R(-,R)}
\\
D_{t-GGor}(R)
\ar@<1ex>[u]^{\id}
\ar@<1ex>[r]^{H_{-t}({\bf R}\Hom_R(D_R,-))}
& \G(R)
\ar@<1ex>[l]^{\Sigma^{-t}D_R\otimes ^L_R-} 
\ar@<1ex>[u]^{}
} 
\]
of equivalences of categories, where the horizontal arrows are quasi-inverses of each other. The diagram is commutative up to canonical isomorphisms. The upper equivalence is the restriction of an equivalence 
between the full subcategory of $D^f_b(R)$ of Cohen-Macaulay complexes of dimension $t$ and the category of finitely generated $R$-modules. The latter equivalence was first observed by Yekutieli and 
Zhang in~\cite{YZ} and later utilized by Lipman, Nayak and Sastry in~\cite{lnsfunctorial}. The lower equivalence comes from Foxby equivalence
$$\xymatrixcolsep{8pc}\xymatrix{ 
 \Aus(R)
\ar@<1ex>[r]^{D_R\otimes ^L_R-}
& \bas(R) 
\ar@<1ex>[l]^{{\bf R}\Hom_R(D_R,-)} 
}
$$
between the Auslander and the Bass classes. 

Inspired by the theory of Gorenstein objects in triangulated categories developed by Asadollahi and Salarian in~\cite{asad}, we want to consider $G$-Gorenstein complexes as Gorenstein objects. 
Let $t\in \mathbb Z$. Set $D=\Sigma^{-t}D_R$. We look at towers  
$$
\xymatrix{
\cdots\ar[r]&D^{\oplus n_{i+1}}\ar[rd]^{g_{i+1}}\ar[rr]^{d_{i+1}}&&D^{\oplus n_i}\ar[rd]^{g_i}\ar[rr]^{d_i}&&D^{\oplus n_{i-1}}\ar[rd]\ar[r]&\cdots\\
M_{i+1}\ar[ru]&&M_i\ar[ru]^{f_i}\ar@{.>}[ll]&&M_{i-1}\ar@{.>}[ll]\ar[ru]^{f_{i-1}}&&M_{i-2}\ar@{.>}[ll]} 
$$
of exact triangles in $D^f_b(R)$, where $d_i=f_{i-1}g_i$. It then comes out in Theorem~\ref{dcomplete} that a complex $M\in D^f_b(R)$ is a $G$-Gorenstein complex of 
dimension $t$ if and only if $M\simeq M_i$ for some $i$ in a tower of triangles, where the triangles are both $\Hom_{D(R)}(D,-)$-exact and $\Hom_{D(R)}(-,D)$-exact (see 
Definition~\ref{exacttriangle}). In Corollary~\ref{K_R-exact} we look at the special case where
$R$ is Cohen-Macaulay with the canonical module $K_R$. Then a finitely generated $R$-module $M$ is G-Gorenstein if and only if $M$ appears as a kernel in an exact complex 
of $R$-modules $$ \cdots\rightarrow K_R^{\oplus n_{i+1}}\stackrel{d_{i+1}}{\rightarrow}K_R^{\oplus n_i} \stackrel{d_i}{\rightarrow}K_R^{\oplus n_{i-1}}\rightarrow\cdots $$
which is both $\Hom_R(K_R,-)$- and $\Hom_R(-,K_R)$-exact. This means that $G$-Goren\-stein modules are exactly the $K_R$-Gorenstein projective 
modules in the sense of~\cite{enje2004}.

We now describe the contents of this paper. In Section 2 we recall some facts of hyperhomological algebra needed in the sequel. In Section 3 we recall some basic properties of Cohen-Macaulay
complexes. In particular, extending the definition Schenzel gave for modules in~\cite{schenzellec}, we introduce the notion of a module of deficiency of a complex, which is the main 
technical tool of this article. We start the investigation of G-Gorenstein complexes in Section 4. In Section 5 we study their behaviour in the equivalences of categories mentioned above. 
In Section 6, we show that $G$-Gorenstein complexes can be considered as Gorenstein objects with respect to a suitable subcategory of $D(R)$. For notation and terminology, see the section Preliminaries below.

\section{Preliminaries}

The purpose of this section is to fix notation and recall some definitions and results of hyperhomological algebra relevant to this article. As a general reference, we mention~\cite{C.L.N} and references therein. For more details, see also~\cite{f.vanishloho} and~\cite{hart}. 

In the following $R$ is always a commutative Noetherian ring. If $R$ is local, then $m$ denotes the maximal ideal and
$k$ the residue field of $R$.

Throughout this article we work within the derived category $D(R)$ of $R$-modules. We use homological grading so that the objects of $D(R)$ are complexes of $R$-modules
of the form $$M: \quad\ldots\stackrel{d_{i-1}}{\rightarrow} M_{i+1}\stackrel{d_{i+1}}{\rightarrow} M_i\stackrel{d_i}{\rightarrow} M_{i-1} \ldots \quad .$$
The derived category is triangulated, the suspension functor $\Sigma$ being defined by the formulas $(\Sigma M)_n= M_{n-1}$ and $d^{\Sigma M}_n=-d_n$. 
The symbol ``$\simeq$'' is reserved for isomorphisms in $D(R)$. We use the subscript ``$b$'' to denote the homological boundness and the superscript ``$f$'' to denote the homological 
finiteness. So the full subcategory of $D(R)$ consisting of complexes with finitely generated homology modules is denoted by $D^f(R)$. As usual, we identify the 
category of $R$-modules as the full subcategory of $D(R)$ of complexes $M$ satisfying $H_i(M)=0$ for $i\neq 0$. For a complex
$M\in D(R)$, by $\sup M$ and $\inf M$, we mean its homological supremum and infimum. The \textit{amplitude} $\amp M = \sup M - \inf M$. We use the standard notations $\otimes_R^L$ and ${\bf R}\Hom$ for the derived tensor product and the derived $\Hom$ functor.

The \textit{support} of a complex $M\in D(R)$ is the set $$\Supp_RM=\left\{p\in \Spec R\mid M_p\not\simeq 0 \right\}.$$ The \textit{Krull dimension} $$ \dim_RM  =\sup \left\{\dim R/p -\inf M_p 
\mid p\in {\Supp_RM}\right\}. $$ When $(R,m)$ is local, the \textit{width} and \textit{depth} of $M$ are defined by the formulas $\width_R M=\inf(k\otimes_L M)$ and $\depth_RM =-\sup {\bf R}\Hom_R(k,M)$,
respectively. 

If $(R,m)$ is a local ring, the \textit{derived local cohomology} functor with respect to  $m$ is denoted by ${\bf R}\Gamma_m$. As usual, we set $H_m^i(-)=H_{-i}({\bf R}\Gamma_m(-))$ for all $i\in \mathbb Z$. Note that
\begin{equation}\label{lidagger}-\inf {\bf R}\Gamma_m(M)=\dim_RM\end{equation}
and
\begin{equation}\label{lsdagger}-\sup{\bf R}\Gamma_m(M)=\depth_RM \end{equation}
If $R$ admits a dualizing complex, we denote by $D_R$ the dualizing complex normalized with $\sup D_R=\dim R$ and $\inf D_R=\depth R$. The \textit{dagger dual} of a complex $M\in D^f_b(R)$ is $M^\dagger = {\bf R}\Hom_R(M,D_R)$. We obtain a contravariant functor $(-)^\dagger\colon D^f_b(R)\rightarrow D^f_b(R)$.  The canonical morphism $M \rightarrow M^{\dagger\dagger}$ induces the biduality $M\simeq M^{\dagger\dagger}$, which is called the \textit{dagger duality} for $M$. The \textit{local duality} says that 
\begin{equation}\label{LDT}{\bf R}\Gamma_m(M)\simeq\Hom_R(M^\dagger,E_R(k)),\end{equation} where $E_R(k)$ denotes the injective envelope of $k$. We will frequently use the formulas
\begin{equation}\label{sdagger}\sup M^{\dagger}=\dim_RM\end{equation}
and
\begin{equation}\label{idagger}\inf M^{\dagger}=\depth M.\end{equation} 
Also observe that \begin{equation}\label{daggerloc}(M_p)^\dagger\simeq  \textstyle  \Sigma^{-\dim R\slash p}(M^\dagger)_p \end{equation} for all $p\in \Spec R$. 
Here the dagger dual  on the left-hand side is taken with respect to the normalized dualizing complex of the localization $R_p$.

Let $R$ be a ring. Recall that an $R$-module $N$ is called \textit{Gorenstein injective}, if there is an exact complex $I$ of injective $R$-modules such that the complex 
$\Hom_R(J,I)$ is exact for every injective $R$-module $J$, and that $N$ appears as a kernel in $I$. For $M\in D_b(R),$ the Gorenstein injective dimension of $M$, denoted by $\Gid_RM$, is defined as the infimum of all integers $n$ such that there exists a complex $I$ of Gorenstein injective $R$-modules for which $I\simeq M$ in $D(R),$ and $I_i=0$ if 
$i>-n$. Note that $\Gid_R\textstyle  \Sigma^sM=-s+\Gid_RM$. The notions of a \textit{Gorenstein projective} module and a \textit{Gorenstein flat} module are defined similarly.  The 
\textit{G-class} of modules, denoted by $G(R)$, consists of all finitely generated Gorenstein projective, or, equivalently,  Gorenstein flat $R$-modules.

Let $(R,m)$ be a local ring admitting a dualizing complex $D$. The \textit{Auslander class} $\Aus(R)$ and the \textit{Bass class} $\bas(R)$ with respect to $D$ are full subcategories of $D_b(R)$ such that
the functors $D\otimes ^L_R-$ and ${\bf R}\Hom_R(D,-)$ restrict to quasi-inverse equivalences between them. This \textit{Foxby equivalence} induces even an equivalence between their restrictions $\Aus^f(R)$ and $\bas^f(R)$ to the category $D^f_b(R)$. It is an important fact that $\Aus(R)$ and $B(R)$ consist exactly of all bounded complexes of finite Gorenstein projective dimension and of finite Gorenstein injective dimension, respectively (see~\cite[Theorem 4.1 and Theorem 4.4]{c.h.f.functurial}).

\section{Cohen-Macaulay complexes}

This section is partly of preliminary nature. We record here for the convenience of the reader some facts about Cohen-Macaulay complexes, which will be used
in the rest of this article. 

Let $(R,m)$ be a local ring. The \textit{Cohen-Macaulay defect} of a complex $M\in D_b(R)$ is the number
$$\cmd_R M=\dim_RM-\depth_R M.$$ It is known that if $M\in D^f_b(R)$ and $M\not\simeq0$, then $\cmd_RM\ge 0$. 
If $\cmd M=0$, then $M$ is called \textit{Cohen-Macaulay}. This is equivalent to complex $M_p$ 
being Cohen-Macaulay for every $p\in\Supp_RM$. Moreover, we then have
\begin{equation}\label{ineq}\dim_RM=\dim_{R_p}M_p+\dim R\slash p.\end{equation}
When $R$ is a non-local ring, a complex $M\in D^f_b(R)$ is defined to be Cohen-Macaulay if the complex $M_m$ 
is Cohen-Macaulay for all $m\in\Max(R)\cap \Supp_RM$.

If $R$ is a ring and $N$ is an $R$-module we use the notation 
$$\Assh_R N=\{p\in \Supp_R N\mid \dim(R\slash p)=\dim N\}.$$

\begin{Proposition}\label{hsk}Let $(R,m)$ be a local ring and let $M\in D^f_b(R)$ be a Cohen-Macaulay complex.  Then 
$$\Ass_R\Ho_s(M)=\left\{p\in\Supp_RM\mid \dim R\slash p=\dim_RM+s\right\},$$
where $s=\sup M$. In particular, $\Ass_R\Ho_s(M)=\Assh_R\Ho_s(M)$. 
\end{Proposition}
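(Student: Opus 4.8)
The plan is to reduce the assertion to the classical statement for a single module by using that dagger duality turns a Cohen-Macaulay complex into a shifted module. Since $M$ is Cohen-Macaulay, $\cmd_R M=0$, so $\dim_R M=\depth_R M=:d$. By~\eqref{sdagger} and~\eqref{idagger} this gives $\sup M^\dagger=\inf M^\dagger=d$, whence $M^\dagger$ has homology in the single degree $d$ and $M^\dagger\simeq\Sigma^d K$ with $K:=\Ho_d(M^\dagger)$ finitely generated. Dagger biduality then yields $M\simeq M^{\dagger\dagger}\simeq\Sigma^{-d}K^\dagger$, so that $\Ho_s(M)\cong\Ho_{s+d}(K^\dagger)$ as $R$-modules. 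Comparing suprema through~\eqref{sdagger} gives $s=\dim_R K-d$, i.e. $s+d=\dim_R K$; thus $\Ho_s(M)$ is precisely the top homology $\Ho_{\dim_R K}(K^\dagger)$. Finally $\Supp_R K=\Supp_R M^\dagger=\Supp_R M$, the second equality holding because dagger duality is an equivalence over every localization.

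It therefore suffices to show $\Ass_R\Ho_{\dim_R K}(K^\dagger)=\Assh_R K$, since then the set in question becomes $\{p\in\Supp_R M\mid\dim R\slash p=\dim_R K\}=\{p\in\Supp_R M\mid\dim R\slash p=\dim_R M+s\}$. To analyse this I would localize: for $p\in\Supp_R K$, formula~\eqref{daggerloc} gives $(K^\dagger)_p\simeq\Sigma^{\dim R\slash p}(K_p)^\dagger$, the dagger on the right taken over $R_p$, so that $\Ho_{\dim_R K}(K^\dagger)_p\cong\Ho_{\dim_R K-\dim R\slash p}\bigl((K_p)^\dagger\bigr)$. By~\eqref{sdagger} over $R_p$ the complex $(K_p)^\dagger$ has supremum $\dim_{R_p}K_p$, so this module vanishes whenever $\dim_R K-\dim R\slash p>\dim_{R_p}K_p$ and equals the top homology of $(K_p)^\dagger$ exactly when $\dim_{R_p}K_p+\dim R\slash p=\dim_R K$. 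Together with the general inequality $\dim_{R_p}K_p+\dim R\slash p\le\dim_R K$ this identifies $\Supp_R\Ho_{\dim_R K}(K^\dagger)$ with the set of $p$ attaining equality.

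To pass from support to associated primes I would use $p\in\Ass_R N\iff\depth_{R_p}N_p=0$ and work over $R_p$ with the module $L:=K_p$ of dimension $c:=\dim_{R_p}K_p$. Local duality~\eqref{LDT} over $R_p$, applied to $L$ and with $E$ the injective hull of the residue field of $R_p$, shows that the Matlis dual of the top homology $\Ho_c(L^\dagger)$ is the top local cohomology module $\Ho^c_{pR_p}(L)$. The key input is the classical description $\operatorname{Att}_{R_p}\Ho^c_{pR_p}(L)=\{q\mid\dim R_p\slash q=c\}$ of its attached primes, which, combined with $\operatorname{Att}\bigl(\Hom_{R_p}(\Ho_c(L^\dagger),E)\bigr)=\Ass_{R_p}\Ho_c(L^\dagger)$, gives $pR_p\in\Ass_{R_p}\Ho_c(L^\dagger)\iff c=0$. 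With the support computation this yields $p\in\Ass_R\Ho_s(M)\iff\dim R\slash p=\dim_R K=\dim_R M+s$, as desired. I expect the main obstacle to be exactly this last step, where the top homology must be handled through local duality and the attached-prime calculation for top local cohomology invoked correctly; the reduction to a module and the support identification are essentially formal. The ``in particular'' clause follows at once: every $p\in\Supp_R\Ho_s(M)$ satisfies $\dim R\slash p\le\dim_R M+s$ with equality precisely on the associated primes, so $\dim_R\Ho_s(M)=\dim_R M+s$ and $\Ass_R\Ho_s(M)=\Assh_R\Ho_s(M)$.
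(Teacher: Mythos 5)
Your argument has one genuine gap of hypothesis: the proposition is stated for an arbitrary local ring $(R,m)$, with no dualizing complex assumed, whereas your entire proof runs through $(-)^\dagger={\bf R}\Hom_R(-,D_R)$, formula~(\ref{daggerloc}) and local duality, all of which presuppose that $R$ admits a dualizing complex. This is not cosmetic: the obvious repair (base change to $\hat R$) is itself delicate, because one must track how the condition $\dim R/p=\dim_RM+s$ and the set $\Ass_R\Ho_s(M)$ behave under completion (minimal primes of $p\hat R$ need not all have dimension $\dim R/p$), and you do not address this. The paper's own proof avoids duality altogether: it quotes the general fact~\cite[(A.6.1.2)]{C.L.N} that for $p\in\Supp_RM$ one has $p\in\Ass_R\Ho_s(M)$ if and only if $\depth_{R_p}M_p=-s$, and then combines $\depth_{R_p}M_p=\dim_{R_p}M_p$ (Cohen--Macaulayness) with the dimension formula~(\ref{ineq}) to finish in three lines, valid over any local ring.

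Granting a dualizing complex, your reduction is correct and the pieces fit together: $M\simeq\Sigma^{-d}K^\dagger$ with $K=K_M$, the identification of $\Ho_s(M)$ with the top homology of $K^\dagger$, the support computation via~(\ref{daggerloc}), and the final passage from support to associated primes via $p\in\Ass_RN\Leftrightarrow\depth_{R_p}N_p=0$, local duality, and the MacDonald--Sharp description of the attached primes of top local cohomology together with $\operatorname{Att}\bigl(\Hom_{R_p}(N,E)\bigr)=\Ass_{R_p}N$ for finitely generated $N$. But this imports considerably heavier machinery than the statement requires, and in effect re-derives the criterion $p\in\Ass_R\Ho_s(M)\Leftrightarrow\depth_{R_p}M_p=-s$ through a duality detour. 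I would recommend either invoking that criterion directly, or, if you want to keep the duality route, adding the hypothesis that $R$ admits a dualizing complex (which suffices for every application of the proposition made later in the paper) and stating that restriction explicitly.
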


\begin{proof} Let $p\in \Supp_RM$. By~\cite[(A.6.1.2)]{C.L.N} we know that
$p\in\Ass_R\Ho_s(M)$ if and only if $\depth_{R_p}M_p=-s$. Since $M$ is Cohen-Macaulay, we have $\depth_{R_p}M_p=\dim_{R_p}M_p$. 
It then follows from formula~$(\ref{ineq})$ that $p\in\Ass_R\Ho_s(M)$ if and only if $\dim_RM=\dim R\slash p-s$. 

\end{proof}

As in the case of modules, one can characterize Cohen-Macaulay complexes in terms of vanishing local cohomology:

\begin{Proposition}\label{delta and cmd} Let $R$ be a ring. If $M\in D^f_b(R)$, then the following statements are equivalent:
\begin{itemize}
\item[a)]$M$ is Cohen-Macaulay;
\item[b)]$\Ho^i_{pR_p}(M_p)=0$ for all $p\in\Spec R$ and $i\neq\dim_{R_p}M_p$;
\item[c)]$\Ho^i_{mR_m}(M_m)=0$ for all $m\in\Max R$ and $i\neq\dim_{R_m}M_m$.
\end{itemize} 
\end{Proposition}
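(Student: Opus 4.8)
The plan is to prove the chain of equivalences $a) \Rightarrow b) \Rightarrow c) \Rightarrow a)$, reducing the global statement to a local one. The equivalence $b) \Leftrightarrow c)$ is essentially a matter of reducing from all primes to maximal ideals, so the heart of the matter is the local statement: for a local ring $(R,\mathfrak{m})$ and $M \in D^f_b(R)$, $M$ is Cohen-Macaulay if and only if $\Ho^i_{\mathfrak{m}}(M) = 0$ for all $i \neq \dim_R M$. I would therefore first isolate and prove this local characterization, then deduce the general statement by localizing.

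For the local characterization, the key tool is the pair of formulas~(\ref{lidagger}) and~(\ref{lsdagger}), which identify $-\inf {\bf R}\Gamma_{\mathfrak{m}}(M) = \dim_R M$ and $-\sup {\bf R}\Gamma_{\mathfrak{m}}(M) = \depth_R M$. Translating into the cohomological indexing $H^i_{\mathfrak{m}}(-) = H_{-i}({\bf R}\Gamma_{\mathfrak{m}}(-))$, these say precisely that $\dim_R M = \max\{i \mid \Ho^i_{\mathfrak{m}}(M) \neq 0\}$ and $\depth_R M = \min\{i \mid \Ho^i_{\mathfrak{m}}(M) \neq 0\}$. Now $M$ is Cohen-Macaulay if and only if $\cmd_R M = \dim_R M - \depth_R M = 0$, i.e. the largest and smallest indices of nonvanishing local cohomology coincide. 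Since local cohomology of a complex in $D^f_b(R)$ is itself bounded, this equality of endpoints is equivalent to $\Ho^i_{\mathfrak{m}}(M)$ being concentrated in the single degree $i = \dim_R M = \depth_R M$, which is the assertion. I would write this out carefully, noting that $M \not\simeq 0$ forces both the sup and inf of ${\bf R}\Gamma_{\mathfrak{m}}(M)$ to be finite so the endpoints are genuinely attained.

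To pass from this local statement to b), I would apply it after localizing at an arbitrary prime $p$. The definition of Cohen-Macaulay recorded just before the proposition says $M$ is Cohen-Macaulay precisely when $M_p$ is Cohen-Macaulay for every $p \in \Supp_R M$ (and trivially $\Ho^i_{pR_p}(M_p) = 0$ for all $i$ when $p \notin \Supp_R M$, since then $M_p \simeq 0$). Applying the local characterization to the local ring $R_p$ and the complex $M_p$, Cohen-Macaulayness of $M_p$ is equivalent to $\Ho^i_{pR_p}(M_p) = 0$ for $i \neq \dim_{R_p} M_p$; quantifying over all $p$ gives the equivalence $a) \Leftrightarrow b)$. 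For $b) \Rightarrow c)$ one simply restricts the quantifier to maximal ideals, which is immediate; for $c) \Rightarrow a)$ I would use that Cohen-Macaulayness in the non-local case is defined by testing at maximal ideals in $\Max R \cap \Supp_R M$, so the local characterization applied at each such $\mathfrak{m}$ recovers $a)$.

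The main obstacle, such as it is, lies in the bookkeeping around the endpoints of ${\bf R}\Gamma_{\mathfrak{m}}(M)$: one must be sure that the supremum and infimum are finite and attained, so that ``$\Ho^i_{\mathfrak{m}}(M) = 0$ for $i$ outside a single value'' is genuinely equivalent to ``$\dim_R M = \depth_R M$'' rather than merely implied in one direction. This rests on the fact that ${\bf R}\Gamma_{\mathfrak{m}}(M)$ has degreewise finitely generated (indeed Artinian) homology and bounded amplitude when $M \in D^f_b(R)$, together with the nonvanishing guaranteed by $M \not\simeq 0$. Once this is secured, the argument is purely formal, driven entirely by the dimension and depth formulas~(\ref{lidagger}) and~(\ref{lsdagger}) and the definition of the Cohen-Macaulay defect.
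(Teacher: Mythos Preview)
Your proposal is correct and follows exactly the approach the paper indicates: the paper's proof consists of the single line ``Immediate from formulas~(\ref{lidagger}) and~(\ref{lsdagger}),'' and you have simply unpacked what that immediacy means, including the reduction to the local case and the bookkeeping around endpoints. There is no substantive difference in method.
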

\begin{proof}Immediate from formulas~$(\ref{lidagger})$ and~$(\ref{lsdagger})$.
\end{proof}

\begin{Remark}\label{hartrmk}Let $R$ be a ring. If $X\subseteq\Spec R$, then a filtration of $X$ is a descending sequence $$\mathcal{F}^{\bullet}\colon\quad \ldots \supseteq F^{i-1}\supseteq F^i\supseteq F^{i+1} \supseteq\ldots $$ of subsets of $X$ such that $\bigcap_i F^i=\emptyset$, $F^i=X$ for some $i\in \mathbb Z$ and each $p\in F^i \setminus  F^{i+1}$ is a minimal element of $F^i$ with respect to inclusion. 
In~\cite[p.~238]{hart} a complex $M\in D^f_b(R)$ is defined to be Cohen-Macaulay with respect to $\mathcal{F}^{\bullet}$ if $\Ho^n_{pR_p}(M_p)=0$ 
for all $n\neq i$ and $p\in F^i \setminus F^{i+1}$.

Given a complex $M\in D^f_b(R)$, set $$F^i=\left\{p\in\Supp_RM\mid \dim_{R_p}M_p\geq i\right\}$$ for all $i\in \mathbb Z$. It is easily checked that this gives a filtration of $\Supp M$ (the so called ``$M$-height-filtration'').  Proposition~\ref{delta and cmd} then implies that $M$ is Cohen-Macaulay in the sense mentioned earlier if and only if $M$ is Cohen-Macaulay with respect to this filtration.  
Let $E(M)$ denote the corresponding Cousin complex. Recall that $E(M)$ is a complex $\ldots \rightarrow E(M)^i \rightarrow E(M)^{i+1}\rightarrow\ldots $ with $$E(M)^i=\bigoplus_{\dim_{R_p}M_p=i} \Ho^i_{pR_p}(M_p).$$ Contrary to our convention, we follow here the general tradition and grade the Cousin complex cohomologically. For more details about Cousin complexes
we refer to~\cite[3.2]{lnsfunctorial}. It now follows from~\cite[Chapter IV, Proposition 3.1]{hart} that $M\simeq E(M)$ if and only if $M$ is Cohen-Macaulay complex. Note that if $M$ is a module, then the Cousin complex studied by Sharp (see~\cite{sharptang}, for example) is the complex $0\rightarrow M\rightarrow E(M)$. 

\end{Remark}

In order to investigate the structure of a Cohen-Macaulay complex, it is useful to introduce the notion of the module of deficiency of a complex.
In the module case this was done by P.~Schenzel in~\cite[p.~60]{schenzellec}.

\begin{Definition}\label{kanonicalforcomplex}
Let $(R,m)$ be a local ring admitting a dualizing complex and let $M\in D^f_b(R)$. For every $i\in \mathbb Z$, set $K^i_M= \Ho_i(M^\dagger)$. The modules $K^i_M$ are called 
the modules of deficiency of the complex $M$. Moreover, we set $K_M=K^{\dim_R M}_M$, and say that $K_M$ is the canonical module of $M$. 
\end{Definition}

\begin{Remark} The modules of deficiency are clearly finitely generated. Using formulas $(\ref{sdagger})$ and $(\ref{idagger})$, we get
$K^i_M=0$ for $i<\depth_RM$ and $i>\dim_RM$. More precisely, by local duality $\Ho^i_m(M)\cong \Hom_R(K_M^i,E_R(k))$
for all $i\in \mathbb Z$.
\end{Remark}

\begin{Lemma}\label{changcanonical}
Let $(R,m)$ be a local ring admitting a dualizing complex and let $M\in D^f_b(R).$ Then 

\begin{itemize}
\item[a)] $(K^i_M)_p\cong K^{i-\dim R\slash p}_{M_p}$ for every $p\in \Supp_RM$;
\item[b)]If $p\in\Supp_RM$ with $\dim_RM=\dim_{R_p}M_p+\dim R\slash p$, then $(K_M)_p\cong K_{M_p}$. In particular, this holds if
$M$ is Cohen-Macaulay.
\end{itemize}
\end{Lemma}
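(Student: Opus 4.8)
The plan is to reduce both statements to the localization formula~$(\ref{daggerloc})$ for the dagger dual together with the fact that homology commutes with localization. Throughout, recall that localizing an $R$-complex is an exact operation, so $(\Ho_i(X))_p\cong \Ho_i(X_p)$ for any $X\in D(R)$, and that the suspension convention $(\Sigma M)_n=M_{n-1}$ gives $\Ho_i(\Sigma^s N)\cong \Ho_{i-s}(N)$.

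First I would prove a). Fix $p\in\Supp_RM$. By exactness of localization we have $(K^i_M)_p=(\Ho_i(M^\dagger))_p\cong \Ho_i((M^\dagger)_p)$. Formula~$(\ref{daggerloc})$ reads $(M_p)^\dagger\simeq \Sigma^{-\dim R\slash p}(M^\dagger)_p$, where the dagger on the left is taken with respect to the normalized dualizing complex of $R_p$; rearranging gives $(M^\dagger)_p\simeq \Sigma^{\dim R\slash p}(M_p)^\dagger$. Applying $\Ho_i$ and using the shift identity above yields $(K^i_M)_p\cong \Ho_{i-\dim R\slash p}((M_p)^\dagger)=K^{i-\dim R\slash p}_{M_p}$, which is exactly the claim.

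For b), under the hypothesis $\dim_RM=\dim_{R_p}M_p+\dim R\slash p$ I specialize part a) to $i=\dim_RM$. Since $K_M=K^{\dim_RM}_M$ by definition, part a) gives $(K_M)_p\cong K^{\dim_RM-\dim R\slash p}_{M_p}=K^{\dim_{R_p}M_p}_{M_p}=K_{M_p}$, the last equality again being the defining formula for the canonical module. The ``in particular'' clause follows at once, because when $M$ is Cohen-Macaulay formula~$(\ref{ineq})$ supplies precisely the dimension identity $\dim_RM=\dim_{R_p}M_p+\dim R\slash p$ for every $p\in\Supp_RM$.

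The argument carries no real obstacle once~$(\ref{daggerloc})$ is granted: all of the mathematical content sits in that localization formula, which is stated in the Preliminaries. The only point demanding care is the bookkeeping of suspensions---matching the normalization conventions for the dualizing complexes of $R$ and of $R_p$ so that the shift $\Sigma^{-\dim R\slash p}$ in~$(\ref{daggerloc})$ is the correct one, and then tracking this shift through the index substitution in $\Ho_i(\Sigma^s N)\cong \Ho_{i-s}(N)$. I expect this indexing to be the step most prone to sign or shift errors, but it is otherwise a direct computation.
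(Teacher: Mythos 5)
Your argument is correct and is essentially identical to the paper's own proof: part a) is the same direct computation from formula~$(\ref{daggerloc})$ together with exactness of localization and the suspension shift, and part b) is the same specialization of a) at $i=\dim_RM$ with the ``in particular'' clause supplied by formula~$(\ref{ineq})$. The suspension bookkeeping you flag as the delicate point is handled correctly.
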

\begin{proof}

\smallskip
\noindent
$a)$ By formula~$(\ref{daggerloc})$ $$(K^i_M)_p\cong \Ho_i((M^\dagger)_p)\cong \Ho_{i-\dim R\slash p}((M_p)^{\dagger})= K^{i-\dim R\slash p}_{M_p}.$$
  
\smallskip
\noindent
 
$b)$ By a) we immediately get $$(K_M)_p=(K_M^{\dim_{R_p}M_p+\dim R\slash p})_p\cong K_{M_p}.$$ The last statement
is then a consequence of formula~$(\ref{ineq})$. 

\end{proof}

\begin{Notation}Let $R$ be a ring. Let $t\in \mathbb Z$. We denote  by $D_{t-CM}(R)$ the full subcategory of $D^f_b(R)$ of Cohen-Macaulay complexes of dimension $t$. 
\end{Notation}

\begin{Proposition}\label{yz} Let $(R,m)$ be a local ring admitting a dualizing complex and let $M\in D^f_b(R)$. Then the following statements are equivalent:
\begin{itemize}
\item[a)]$M$ is Cohen-Macaulay;
\item[b)]$M^\dagger\simeq \Sigma^{\dim_RM}K_M$;
\item[c)]$M^\dagger\simeq \Sigma^{t}N$ for some finitely generated $R$-module $N$ and $t\in \mathbb Z$.
\end{itemize}
It follows that the functors
$$\xymatrixcolsep{8pc}\xymatrix{ 
D_{t-CM}(R) 
\ar@<1ex>[r]^{K_{-}}
& (R\mathrm{-mod})^{\mathrm{opp}}
\ar@<1ex>[l]^{\Sigma^{-t}(-)^\dagger} 
}
$$ are quasi-inverses of each other, and thus provide an equivalence of categories.

\end{Proposition}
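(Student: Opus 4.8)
The plan is to prove the equivalence of a), b), c) first, and then derive the functorial statement from b).

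First I would prove the implications. The cleanest direction is b) $\Rightarrow$ c), which is immediate with $N=K_M$ and $t=\dim_R M$. For c) $\Rightarrow$ a), suppose $M^\dagger\simeq\Sigma^t N$ for a module $N$ and some $t$. Then $\sup M^\dagger=\inf M^\dagger=t$, so formulas~$(\ref{sdagger})$ and~$(\ref{idagger})$ give $\dim_R M=\sup M^\dagger=t=\inf M^\dagger=\depth_R M$, whence $\cmd_R M=0$ and $M$ is Cohen-Macaulay. The main work is a) $\Rightarrow$ b). Assuming $M$ is Cohen-Macaulay, I would again invoke $(\ref{sdagger})$ and $(\ref{idagger})$: since $\dim_R M=\depth_R M$, we get $\sup M^\dagger=\inf M^\dagger=\dim_R M$, so $M^\dagger$ is concentrated in a single degree and therefore $M^\dagger\simeq\Sigma^{\dim_R M}\Ho_{\dim_R M}(M^\dagger)$. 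By Definition~\ref{kanonicalforcomplex} the homology module here is exactly $K_M=K_M^{\dim_R M}$, giving $M^\dagger\simeq\Sigma^{\dim_R M}K_M$.

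Next I would establish the functorial claim. The point is that dagger duality $(-)^\dagger$ is a contravariant functor on $D^f_b(R)$ that is its own quasi-inverse up to the biduality isomorphism $M\simeq M^{\dagger\dagger}$ recalled in the Preliminaries. For $M\in D_{t\text{-}CM}(R)$ we have $\dim_R M=t$, so b) reads $M^\dagger\simeq\Sigma^t K_M$, i.e.\ $\Sigma^{-t}M^\dagger\simeq K_M$; this shows the functor $\Sigma^{-t}(-)^\dagger$ sends $D_{t\text{-}CM}(R)$ into modules, landing in the opposite category because $(-)^\dagger$ is contravariant. Conversely, starting from a finitely generated module $N$, the complex $\Sigma^t N$ has $\sup=\inf=t$, so by c) (applied with dagger duality, using $M=(\Sigma^t N)^\dagger\simeq\Sigma^{-t}N^\dagger$ or directly) its dagger dual is Cohen-Macaulay of dimension $t$; concretely the composite in the other direction sends $N$ to $(\Sigma^t N)^\dagger$, and one checks its dimension equals $t$ via $(\ref{sdagger})$.

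To see the two functors are quasi-inverse, I would compose them and compare with biduality. Starting from $M\in D_{t\text{-}CM}(R)$, applying $\Sigma^{-t}(-)^\dagger$ gives $K_M$; applying the return functor (which I read off the diagram as $N\mapsto(\Sigma^t N)^\dagger=\Sigma^{-t}N^\dagger$ followed by the identification, so effectively $\Sigma^{t}(-)^{\dagger}$ composed appropriately) recovers $\Sigma^{-t}\bigl(\Sigma^{-t}M^\dagger\bigr)^\dagger$; using that $(-)^\dagger$ commutes with suspension as $(\Sigma^s X)^\dagger\simeq\Sigma^{-s}X^\dagger$ together with biduality $M^{\dagger\dagger}\simeq M$, this simplifies to $M$. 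The other composite is handled symmetrically using biduality for modules viewed as complexes. The main obstacle I anticipate is purely bookkeeping: correctly tracking the suspension shifts so that $(\Sigma^s X)^\dagger\simeq\Sigma^{-s}X^\dagger$ and the two copies of $\Sigma^{-t}$ cancel against the single biduality, and checking that these isomorphisms are natural in $M$ (resp.\ $N$) rather than merely objectwise. Naturality follows because the biduality morphism $M\to M^{\dagger\dagger}$ is a morphism of functors, so the delicate part is really just verifying the degree shifts are consistent throughout.
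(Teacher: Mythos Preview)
Your proof of the equivalence of a), b), c) is correct and matches the paper's argument essentially line for line: the paper proves $a)\Rightarrow b)$ by noting $K^i_M=0$ for $i\neq\dim_RM$ (which is your observation that $M^\dagger$ is concentrated in a single degree), takes $b)\Rightarrow c)$ as trivial, and does $c)\Rightarrow a)$ via formulas~$(\ref{sdagger})$ and~$(\ref{idagger})$ exactly as you do. The paper does not spell out the functorial statement at all---it simply asserts that the equivalence of categories follows---so your discussion of biduality and suspension bookkeeping is additional detail rather than a different approach; your computation $\Sigma^{-t}(\Sigma^{-t}M^\dagger)^\dagger\simeq M$ is correct, though the parenthetical about ``$\Sigma^{t}(-)^{\dagger}$ composed appropriately'' is garbled and should be dropped in favor of just applying $\Sigma^{-t}(-)^\dagger$ directly to $K_M\simeq\Sigma^{-t}M^\dagger$.
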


\begin{proof}

\smallskip
\noindent$a)\Rightarrow b)$: Because $K^i_M=0$ if $i\neq\dim_RM$, we get $M^{\dagger}\simeq\Sigma^{\dim_RM}K_M$.

\smallskip

\noindent$b)\Rightarrow c)$: This is trivial.

\noindent$c)\Rightarrow a)$: Since now $\sup M^{\dagger}=\inf M^{\dagger}$, we have $\cmd_RM=0$ by formulas~$(\ref{sdagger})$ and $(\ref{idagger})$
implying that $M$ is Cohen-Macaulay.

\end{proof}

The equivalence of categories of Proposition~\ref{yz} is due to Yekutieli and Zhang (see~\cite[Theorem 6.2]{YZ}). In particular, we also recover
the following (see~\cite[Remark 6.3]{YZ}):

\begin{Corollary}\label{abeli}Let $(R,m)$ be a local ring admitting a dualizing complex. Then $D_{t-CM}(R)$ is an abelian subcategory of $D^f_b(R).$
\end{Corollary}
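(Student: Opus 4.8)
The plan is to deduce this directly from the equivalence of categories established in Proposition~\ref{yz}, using the fact that the property of being an abelian category is invariant under equivalence. The point is purely formal: if $\mathcal{A}$ is abelian and $F\colon\mathcal{B}\to\mathcal{A}$ is an equivalence with quasi-inverse $G$, then one transports the zero object, biproducts, kernels and cokernels of $\mathcal{A}$ back to $\mathcal{B}$ along $G$, and the exactness axioms (every monomorphism a kernel, every epimorphism a cokernel) are preserved because they are stated entirely in categorical language that $F$ and $G$ respect.

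Concretely, I would proceed in three steps. First, since $R$ is Noetherian, the category $R\mathrm{-mod}$ of finitely generated $R$-modules is abelian. Second, the opposite $(R\mathrm{-mod})^{\mathrm{opp}}$ is then also abelian, as the opposite of any abelian category is abelian, with the roles of kernels and cokernels interchanged. Third, Proposition~\ref{yz} exhibits the functors $K_-$ and $\Sigma^{-t}(-)^\dagger$ as mutually quasi-inverse equivalences between $D_{t-CM}(R)$ and $(R\mathrm{-mod})^{\mathrm{opp}}$; transporting the abelian structure of the latter along $\Sigma^{-t}(-)^\dagger$ endows $D_{t-CM}(R)$ with the structure of an abelian category. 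If one wishes, the resulting kernels and cokernels in $D_{t-CM}(R)$ can be described explicitly by applying the dagger functor to the cokernels and kernels of the corresponding module maps, but this is not needed to establish the assertion.

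The only genuinely delicate point is interpreting the statement correctly rather than any real mathematical obstacle. Here \emph{abelian subcategory} must be read as asserting that the full subcategory $D_{t-CM}(R)$, taken with its own intrinsic categorical structure, is abelian; it does \emph{not} mean that it inherits an abelian or exact structure from the ambient triangulated category $D^f_b(R)$, which is itself not abelian. Once this reading is fixed, the conclusion follows immediately from the equivalence, and the corollary is essentially a reformulation of Proposition~\ref{yz}. The main work has therefore already been carried out in establishing that equivalence; the present statement records only its categorical consequence.
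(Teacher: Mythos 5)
Your argument is correct and is exactly the route the paper takes: Corollary~\ref{abeli} is presented as an immediate formal consequence of the equivalence of Proposition~\ref{yz} with the abelian category $(R\mathrm{-mod})^{\mathrm{opp}}$, with no further argument beyond citing Yekutieli--Zhang. Your remark on how to read ``abelian subcategory'' is also consistent with the paper, which only later (Lemma~\ref{123}) upgrades this to the statement that $D_{t-CM}(R)$ is the heart of a t-structure on $D^f_b(R)$.
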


\begin{Corollary}\label{propkkm1}Let $(R,m)$ be a local ring admitting a dualizing complex and let $M\in D^f_b(R)$ be a Cohen-Macaulay complex. Then
\begin{itemize} 
\item[a)]$\dim_RK_M=\dim_RM+\sup M$;
\item[b)]$\depth_RK_M=\dim_RM+\inf M$.
\end{itemize}
In particular, $K_M$ is Cohen-Macaulay if and only if $M$ is a module up to a suspension.
\end{Corollary}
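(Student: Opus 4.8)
The plan is to deduce both equalities from a single application of the dagger duality to the module $K_M$, using the formulas $(\ref{sdagger})$ and $(\ref{idagger})$. The starting point is Proposition~\ref{yz}: since $M$ is Cohen-Macaulay, we have $M^\dagger\simeq\Sigma^{\dim_RM}K_M$. Dagger-dualizing this isomorphism and invoking the dagger duality $M\simeq M^{\dagger\dagger}$, together with the elementary identity $(\Sigma^s L)^\dagger\simeq\Sigma^{-s}L^\dagger$ coming from ${\bf R}\Hom_R(\Sigma^s L,D_R)\simeq\Sigma^{-s}{\bf R}\Hom_R(L,D_R)$, should give
$$K_M^\dagger\simeq\Sigma^{\dim_RM}M.$$
This is the one formula I would verify carefully, since it carries all the content of the statement; the rest is bookkeeping with suspensions.

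Granting this, I would apply the identities $\sup K_M^\dagger=\dim_RK_M$ and $\inf K_M^\dagger=\depth_RK_M$, which are precisely $(\ref{sdagger})$ and $(\ref{idagger})$ for the \emph{module} $K_M$. Combining these with $\sup(\Sigma^{\dim_RM}M)=\dim_RM+\sup M$ and $\inf(\Sigma^{\dim_RM}M)=\dim_RM+\inf M$, the displayed isomorphism yields $a)$ and $b)$ simultaneously.

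For the final assertion I would simply subtract: by $a)$ and $b)$,
$$\cmd_RK_M=\dim_RK_M-\depth_RK_M=\sup M-\inf M=\amp M.$$
Thus $K_M$ is Cohen-Macaulay exactly when $\amp M=0$, i.e.\ when $M$ has homology concentrated in a single degree, which is precisely the condition that $M$ be a module up to a suspension. The main, and rather modest, obstacle is keeping the suspension exponents and sign conventions straight in the step producing $K_M^\dagger\simeq\Sigma^{\dim_RM}M$; once that isomorphism is secured the remainder of the proof is purely formal.
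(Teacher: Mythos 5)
Your proposal is correct and follows essentially the same route as the paper: the paper's one-line proof also starts from $K_M\simeq\Sigma^{-\dim_RM}M^\dagger$ (equivalently, $K_M^\dagger\simeq\Sigma^{\dim_RM}M$ after biduality) and reads off both equalities from formulas $(\ref{sdagger})$ and $(\ref{idagger})$. Your key isomorphism and the bookkeeping with suspensions are all accurate, and the concluding $\cmd_RK_M=\amp M$ argument matches the intended justification of the final assertion.
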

\begin{proof} Since $K_M\simeq \Sigma^{-\dim_RM}M^\dagger$ by Proposition~\ref{yz}, the claim follows	from formulas $(\ref{sdagger})$ and $(\ref{idagger})$.
\end{proof}

\begin{Proposition}\label{supprtcm}Let $(R,m)$ be a local ring admitting a dualizing complex and let $M\in D^f_b(R)$ be a Cohen-Macaulay complex. Then 
\begin{itemize}
\item[a)] $\Supp_RK_M=\Supp_RM$;
\item[b)] $\Assh_RK_M=\Ass_R\Ho_s(M)$.
\end{itemize}
\end{Proposition}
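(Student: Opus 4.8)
The plan is to prove both statements by reducing to the canonical module via Proposition~\ref{yz}, which gives $M^\dagger\simeq\Sigma^{\dim_RM}K_M$ for a Cohen-Macaulay complex, together with the localization behaviour of the deficiency modules recorded in Lemma~\ref{changcanonical}. For part a), I would first observe that $\Supp_RK_M\subseteq\Supp_RM$ because $K_M=\Ho_{\dim_RM}(M^\dagger)$ and $\Supp_RM^\dagger=\Supp_RM$ (the dagger dual has the same support, as $D_R$ is dualizing). For the reverse inclusion, take $p\in\Supp_RM$. Since $M$ is Cohen-Macaulay, Lemma~\ref{changcanonical}b) applies and gives $(K_M)_p\cong K_{M_p}$. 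Now $M_p$ is a nonzero Cohen-Macaulay complex over $R_p$, so its canonical module $K_{M_p}$ is nonzero (the canonical module of a nonzero Cohen-Macaulay complex cannot vanish, since by Proposition~\ref{yz} we have $M_p^\dagger\simeq\Sigma^{\dim_{R_p}M_p}K_{M_p}$ and $M_p^\dagger\not\simeq 0$). Hence $(K_M)_p\neq 0$, so $p\in\Supp_RK_M$. This gives the equality in a).

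For part b), I would compute the dimension of $K_M$. By Corollary~\ref{propkkm1}a), $\dim_RK_M=\dim_RM+\sup M = \dim_RM + s$, where $s=\sup M$. By definition, $$\Assh_RK_M=\{p\in\Supp_RK_M\mid \dim R\slash p=\dim_RK_M\}.$$ Using part a) to replace $\Supp_RK_M$ by $\Supp_RM$, and substituting $\dim_RK_M=\dim_RM+s$, the condition $\dim R\slash p=\dim_RK_M$ becomes $\dim R\slash p=\dim_RM+s$. On the other hand, Proposition~\ref{hsk} tells us precisely that $$\Ass_R\Ho_s(M)=\{p\in\Supp_RM\mid \dim R\slash p=\dim_RM+s\}.$$ Comparing the two descriptions, which now coincide, yields $\Assh_RK_M=\Ass_R\Ho_s(M)$, as claimed.

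The main obstacle, such as it is, lies in justifying that the canonical module of a nonzero Cohen-Macaulay complex is itself nonzero locally, so that a) delivers genuine equality rather than just one inclusion; this is the only place where one must invoke the Cohen-Macaulay hypothesis in an essential way through Lemma~\ref{changcanonical}b), together with the nonvanishing of $M_p^\dagger$. Once part a) is secured, part b) is a bookkeeping comparison of the two explicit set descriptions supplied by Corollary~\ref{propkkm1} and Proposition~\ref{hsk}, with no further difficulty. I would therefore structure the write-up so that a) is established first and then freely used in the identification of supports appearing in b).
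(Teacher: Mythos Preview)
Your proof is correct and, for part b), identical to the paper's. For part a) you take a slightly longer route than necessary: the paper observes that Proposition~\ref{yz} gives $M^\dagger\simeq\Sigma^{\dim_RM}K_M$ as complexes, so $\Supp_RK_M=\Supp_RM^\dagger$ is an immediate \emph{equality}, not just an inclusion; it then shows $\Supp_RM^\dagger=\Supp_RM$ via formula~(\ref{daggerloc}) and biduality, exactly the fact you invoke. Your separate argument for $\Supp_RM\subseteq\Supp_RK_M$ via Lemma~\ref{changcanonical}b) and the nonvanishing of $K_{M_p}$ is valid but redundant once you have both $\Supp_RK_M=\Supp_RM^\dagger$ and $\Supp_RM^\dagger=\Supp_RM$ in hand.
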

\begin{proof}
$a)$ By Proposition~\ref{yz} $M^{\dagger}\simeq\Sigma^{\dim_RM}K_M$. So $\Supp_RK_M=\Supp_RM^{\dagger}$. On the other hand,
because of formula~$(\ref{daggerloc})$, we have $\Supp_RM^{\dagger}\subseteq\Supp_RM$. Then $\Supp_RM\subseteq\Supp_RM^{\dagger}$  by biduality so that $\Supp_RM=\Supp_RM^{\dagger}=\Supp_RK_M$. 

$b)$ By a) and Proposition~\ref{propkkm1} $\Assh_R K_M$ consists of $p\in \Supp M$ satisfying $\dim R\slash p=\dim_RM+\sup M$. The claim then follows from Proposition~\ref{hsk}.
\end{proof}

\section{Properties of $G$-Gorenstein complexes}

Recall from~\cite[p.~248]{hart} that a complex $M\in D^f_b(R)$ is called a Gorenstein complex if it is Cohen-Macaulay and the local cohomology modules $\Ho^i_{pR_p}(M_p)$ are injective 
$R_p$-modules for all $i\in \mathbb Z$ and $p\in\Spec R$. Motivated by this, we now give 

\begin{Definition}\label{G-gorcom}
Let $R$ be a ring. A complex $M\in D^f_b(R)$ is called a G-Gorenstein complex if it is a Cohen-Macaulay and the local cohomology modules $\Ho^i_{pR_p}(M_p)$ are Gorenstein injective $R_p$-modules for all $i\in \mathbb Z$ 
and $p\in\Spec R$.
\end{Definition}

\begin{Remark}Suppose that $R$ admits a dualizing complex. Then $\Ho^i_{pR_p}(M_p)$ is Gorenstein injective as an $R_p$-module if and only if it is Gorenstein injective as an $R$-module (use~\cite[Lemma 3.2]{zagh} 
and~\cite[Proposition 5.5]{c.h.f.functurial}). Furthermore, we know by~\cite[Theorem 6.9]{c.h.f.functurial} and~\cite[Theorem 2.6]{hghd} that the class of Gorenstein injective $R$-modules is closed under direct sums and summands. Let $E(M)$ denote the Cousin complex of $M$ with respect to the ``$M$-height filtration'' as in Remark~\ref{hartrmk}. The condition $\Ho^i_{pR_p}(M_p)$ is Gorenstein injective for all $i\in \mathbb Z$ and $p\in\Spec R$ is thus equivalent to the components of $E(M)$ being Gorenstein injective. Recalling from Remark~\ref{hartrmk} that $M$ is Cohen-Macaulay if and only if $M\simeq E(M)$, we conclude that $M$ is $G$-Gorenstein if and only if its Cousin complex $E(M)$ provides a Gorenstein injective resolution of $M$. In particular, Definition~\ref{G-gorcom} generalizes the definition of a $G$-Gorenstein module Aghajani and Zakeri gave in~\cite[Definition 3.1]{zagh}. 
\end{Remark}

In the presence of a dualizing complex we could reformulate Definition~\ref{G-gorcom} as follows by using only maximal ideals:

\begin{Proposition}\label{G-Gor for local}Let $R$ be a ring admitting a dualizing complex and let $M\in D^f_b(R)$. Then $M$ is a G-Gorenstein complex if and only if $M$ is Cohen-Macaulay and 
the local cohomology modules $\Ho^i_m(M)$ are Gorenstein injective $R_m$-modules for all $m\in\Max(R)$ and $i\in \mathbb Z$.
\end{Proposition}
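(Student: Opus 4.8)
The plan is to observe first that both conditions include the requirement that $M$ be Cohen-Macaulay, so I may assume this throughout and need only compare the two conditions on local cohomology. The forward implication is then immediate: a maximal ideal is in particular a prime, and for $m\in\Max R$ the module $\Ho^i_m(M)$ is $m$-torsion, so $\Ho^i_m(M)\cong\Ho^i_{mR_m}(M_m)$ as $R_m$-modules, which is Gorenstein injective over $R_m$ by Definition~\ref{G-gorcom}.

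For the converse I would fix a prime $p\in\Supp_RM$ (the case $p\notin\Supp_RM$ being trivial) and an integer $i$, and show that $\Ho^i_{pR_p}(M_p)$ is Gorenstein injective over $R_p$. Since $M$ is Cohen-Macaulay, Proposition~\ref{delta and cmd} shows $\Ho^i_{pR_p}(M_p)=0$ whenever $i\neq d_p:=\dim_{R_p}M_p$, so only $i=d_p$ needs to be treated. I would then choose a maximal ideal $m\supseteq p$ and localize at it: setting $(S,\fn)=(R_m,mR_m)$, which still admits a dualizing complex, and $L=M_m$, the complex $L$ is Cohen-Macaulay over $S$, satisfies $L_{pS}=M_p$, and has $\Ho^i_{\fn}(L)\cong\Ho^i_m(M)$ Gorenstein injective over $S$ for all $i$ by hypothesis. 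This reduces the problem to the local ring $S$.

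Over $S$ the strategy is to pass to the modules of deficiency. Taking homology in the local duality isomorphism~$(\ref{LDT})$ gives $\Ho^i_{\fn}(L)\cong\Hom_S(K^i_L,E_S(k(m)))$, so in particular the Matlis dual $\Hom_S(K_L,E_S(k(m)))$ of the finitely generated canonical module $K_L=K_L^{\dim_S L}$ is Gorenstein injective. The key input, in the module case already established by Aghajani and Zakeri, is that for a finitely generated module over a local ring with a dualizing complex, membership in the $G$-class is equivalent to its Matlis dual being Gorenstein injective (see~\cite{zagh}); this yields $K_L\in\G(S)$. Because the $G$-class is stable under localization (see~\cite{AB}), $(K_L)_{pS}\in\G(R_p)$, and since $L$ is Cohen-Macaulay Lemma~\ref{changcanonical}(b) identifies $(K_L)_{pS}\cong K_{M_p}$, whence $K_{M_p}\in\G(R_p)$. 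Applying the same equivalence over $R_p$ in the reverse direction, $\Hom_{R_p}(K_{M_p},E_{R_p}(k(p)))$ is Gorenstein injective over $R_p$; by~$(\ref{LDT})$ over $R_p$ this module is exactly $\Ho^{d_p}_{pR_p}(M_p)$, which completes the argument.

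I expect the main obstacle to be the duality used in the third paragraph, and specifically its nontrivial direction: that a finitely generated module whose Matlis dual is Gorenstein injective must itself lie in the $G$-class. This is where the dualizing complex is genuinely needed, as it rests on the Foxby-type interplay between Gorenstein injective dimension and $G$-dimension under Matlis duality, via the identification of $\bas(S)$ with the complexes of finite Gorenstein injective dimension; I would extract it from~\cite{zagh} in the module case, or from~\cite{c.h.f.functurial}. The opposite direction is elementary over any Noetherian ring, since applying the exact functor $\Hom_S(-,E_S(k(m)))$ to the complete resolution defining $K_L\in\G(S)$ produces an exact complex of injective modules which, using $F\otimes_S J\cong\Hom_S(\Hom_S(F,S),J)$ for finitely generated free $F$ and injective $J$, remains exact under $\Hom_S(J,-)$.
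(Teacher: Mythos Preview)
Your proof is correct and follows essentially the same route as the paper's: reduce to the local case, use local duality to translate Gorenstein injectivity of $\Ho^i_m(M)$ into a Gorenstein property of the deficiency module, localize that property, and translate back over $R_p$. The only differences are cosmetic --- the paper phrases the key duality as ``$K^i_M$ Gorenstein flat $\Leftrightarrow$ its Matlis dual Gorenstein injective'' via~\cite[Theorem~6.4.2]{C.L.N} and handles every index $i$ at once using Lemma~\ref{changcanonical}(a), whereas you invoke the $G$-class and use Cohen--Macaulayness with Lemma~\ref{changcanonical}(b) to treat only the single nonvanishing index.
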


\begin{proof} Let $m\in\Max(R)$ and $i\in \mathbb Z$. It is enough to show that if $\Ho^i_{m}(M)$ is Gorenstein injective, then $\Ho^i_{pR_p}(M_p)$ is Gorenstein injective for all $p\in\Spec R$ with
$p\subset m$. 
Since $R$ admits a dualizing complex, it follows from~\cite[Proposition 5.5]{c.h.f.functurial} that $\Ho^i_{mR_m}(M_m) \cong (\Ho^i_m(M))_m$ is Gorenstein injective. We may thus assume that $R$ is local. We have
$\Ho^i_m(M)\cong\Hom_R(K^i_M,\E_R(k))$ The module $\Ho^i_m(M)$ now being Gorenstein injective, this implies by~\cite[Theorem 6.4.2]{C.L.N} that $K^i_M$ is Gorenstein flat. By
Lemma~\ref{changcanonical} a) $K^i_{M_p}\cong(K^{i+\dim R\slash p}_M)_p$. 
So $K^i_{M_p}$ is Gorenstein flat. Using~\cite[Theorem 6.4.2]{C.L.N} again shows that  $\Ho^i_{pR_p}(M_p)\cong\Hom_{R_p}(K^i_{M_p},\E_{R_p}(R_p\slash{pR_p}))$ is Gorenstein injective as wanted.
\end{proof}

In analogy with Sharp's result~\cite[Theorem 3.11 (vi)]{sharp} on Gorenstein modules, we want to characterize $G$-Gorenstein complexes in terms of Gorenstein injective dimension. First we need two lemmas.

\begin{Lemma}\label{gdim}
Let $(R,m)$ be a local ring admitting a dualizing complex and let $M\in D^f_b(R)$. Then $\Gid_RM=\Gid_R{\bf R}\Gamma_m(M)$.
\end{Lemma}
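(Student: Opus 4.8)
The plan is to transport the computation to the dagger dual by means of local duality, and then to evaluate the two sides by Bass-type formulas. By local duality~(\ref{LDT}), ${\bf R}\Gamma_m(M)\simeq\Hom_R(M^\dagger,E_R(k))$ is the Matlis dual of $M^\dagger\in D^f_b(R)$. The module-level statement of~\cite[Theorem 6.4.2]{C.L.N}, to the effect that Matlis duality interchanges Gorenstein flatness and Gorenstein injectivity, upgrades to a numerical equality of Gorenstein injective and Gorenstein flat dimensions under the functor $\Hom_R(-,E_R(k))$ on $D^f_b(R)$, valid in $\mathbb Z\cup\{\pm\infty\}$. This gives $\Gid_R{\bf R}\Gamma_m(M)=\Gid_R\Hom_R(M^\dagger,E_R(k))=\Gfd_RM^\dagger$, and reduces the lemma to the identity $\Gfd_RM^\dagger=\Gid_RM$.

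For the finiteness bookkeeping I would invoke the standard fact that $(-)^\dagger$ interchanges the Auslander and Bass classes (see~\cite{C.L.N}): combined with the characterisations of $\Aus(R)$ and $\bas(R)$ recalled in the Preliminaries, this shows that $\Gfd_RM^\dagger<\infty$ iff $M^\dagger\in\Aus(R)$ iff $M\in\bas(R)$ iff $\Gid_RM<\infty$, so the two quantities are finite simultaneously. In the finite case I would compute each side explicitly. Since $M\in D^f_b(R)$ has finite Gorenstein injective dimension, the Gorenstein analogue of Bass's formula gives $\Gid_RM=\depth R-\inf M$. Since $M^\dagger\in D^f_b(R)$ has finite Gorenstein projective (equivalently, Gorenstein flat) dimension, the Auslander--Bridger formula gives $\Gfd_RM^\dagger=\Gpd_RM^\dagger=\depth R-\depth_RM^\dagger$; and applying formula~(\ref{idagger}) to the complex $M^\dagger$ together with the biduality $M^{\dagger\dagger}\simeq M$ yields $\depth_RM^\dagger=\inf M^{\dagger\dagger}=\inf M$. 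Hence both sides equal $\depth R-\inf M$, and the lemma follows.

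The main obstacle is the first step: one must check that the module-level Matlis statement of~\cite[Theorem 6.4.2]{C.L.N} genuinely promotes to the numerical identity $\Gid_R\Hom_R(-,E_R(k))=\Gfd_R(-)$ on all of $D^f_b(R)$, including the joint finiteness, since the entire reduction rests on it; everything afterwards is an assembly of standard formulas. A secondary point requiring care is the dagger bookkeeping: formula~(\ref{idagger}) must be applied to $M^\dagger$ and not to $M$, so that it delivers $\depth_RM^\dagger=\inf M$ rather than $\inf M^\dagger=\depth_RM$, and it is precisely at this exchange of $\inf$ and $\depth$ under dagger duality that an index slip is easiest to commit.
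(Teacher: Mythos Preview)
Your argument is correct and takes a genuinely different route from the paper's. The paper stays on the ``$M$-side'': it invokes \cite[Theorem 5.9]{c.h.f.functurial} for simultaneous finiteness, then applies the Chouinard-type formula \cite[Theorem 6.8]{c.h.f.functurial} to ${\bf R}\Gamma_m(M)$ (whose support is $\{m\}$) to get $\Gid_R{\bf R}\Gamma_m(M)=\depth R-\width_R{\bf R}\Gamma_m(M)$, and computes this width via the \v{C}ech description ${\bf R}\Gamma_m(M)\simeq C_m(R)\otimes^L_R M$ together with $\width_R M=\inf M$; comparison with $\Gid_R M=\depth R-\inf M$ finishes the proof. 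You instead pass through the dagger dual via local duality and reduce to $\Gfd_R M^\dagger=\Gid_R M$, which you then recompute from Bass and Auslander--Bridger formulas and the $\inf$/$\depth$ swap under $(-)^\dagger$. Two remarks sharpen your route. First, your ``main obstacle'' is not one: \cite[Theorem 6.4.2]{C.L.N} is already stated for complexes in $D^f_b(R)$ and gives the numerical identity $\Gid_R\Hom_R(N,E_R(k))=\Gfd_R N$ together with simultaneous finiteness, so no upgrade is needed. Second, the identity $\Gid_R M=\Gpd_R M^\dagger$ that you re-derive is exactly \cite[Corollary 6.4]{c.h.f.functurial} (used in the paper's proof of Lemma~\ref{G-dim}); quoting it directly collapses your argument to three lines. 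What each approach buys: the paper's computation is self-contained and avoids dagger bookkeeping, while yours exhibits the result as a formal consequence of the Matlis/dagger dictionary between Gorenstein flat and Gorenstein injective dimensions, which is conceptually illuminating and reusable.
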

\begin{proof} Since $R$ admits a dualizing complex, we know by~\cite[Theorem 5.9]{c.h.f.functurial} that $\Gid_R{\bf R}\Gamma_m(M)$ and $ \Gid_RM$ are simultaneously finite. So we can suppose that both of
them are finite. We will use~\cite[Theorem 6.8]{c.h.f.functurial} according to which $$\Gid N=\sup\{\depth R_p-\width_{R_p}N_p\mid p\in \Spec R\}$$ for any $N\in D_b(R)$. Here $\width_{R_p}N_p=\infty$
if $p\not\in \Supp_R N$. Noting that $\Supp_R{\bf R}\Gamma_m(M)=\left\{m\right\}$, it then follows that $$\Gid_R{\bf R}\Gamma_m(M)=\depth R-\width _R{\bf R}\Gamma_m(M).$$ Recall from~\cite[Proposition 3.1.2]{lipman2}, for example, that ${\bf R}\Gamma_m(M)\simeq \Co_m(R)\otimes^L_R M$, where $\Co_m(R)$ denotes the \v{C}ech complex on $m$. Because $\width_R C_m(R)=0$, \cite[(A.6.5)]{C.L.N} implies that $\width_R{\bf R}\Gamma_m(M)=\width_RM$. Furthermore, we have $\width _R M=\inf M$, since $M\in D^f_b(R)$. On the other hand, by \cite [Theorem 6.3]{c.h.f.functurial}
$\Gid M=\depth R-\inf M$. We can thus conclude that $\Gid_R{\bf R}\Gamma_m(M)=\Gid_R M$, as wanted.
\end{proof}

\begin{Lemma}\label{G-dim}Let $(R,m)$ be a local ring admitting a dualizing complex. If $M\in D^f_b(R)$ has finite Gorenstein injective dimension, then
$\Gid_RM\geq\dim_RM$.
 \end{Lemma}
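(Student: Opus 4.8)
The plan is to read the desired bound off the \emph{preceding} Lemma~\ref{gdim}, combined with the local cohomology description of dimension in~(\ref{lidagger}) and the elementary inequality $\Gid_R N\ge -\inf N$. The point is that although $\dim_R M$ and $\Gid_R M$ are a priori unrelated, the functor ${\bf R}\Gamma_m$ converts dimension into an infimum of homology while leaving Gorenstein injective dimension unchanged, so the two quantities can be compared on the complex ${\bf R}\Gamma_m(M)$.

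First I would record the formal bound $\Gid_R N\ge -\inf N$, valid for every $N\in D_b(R)$. If $\Gid_R N=\infty$ there is nothing to prove, so assume it is finite and choose a complex $I$ of Gorenstein injective modules with $I\simeq N$ that realizes the Gorenstein injective dimension, i.e.\ with $I_i=0$ for $i<-\Gid_R N$. Since $\Ho_i(N)\cong \Ho_i(I)$ vanishes for $i<-\Gid_R N$, we get $\inf N\ge -\Gid_R N$, that is, $\Gid_R N\ge -\inf N$. Applying this to $N={\bf R}\Gamma_m(M)$, and using that $\Gid_R M<\infty$ forces $\Gid_R{\bf R}\Gamma_m(M)<\infty$ by Lemma~\ref{gdim}, the assertion falls out of the chain
\[
\Gid_R M=\Gid_R{\bf R}\Gamma_m(M)\ge -\inf {\bf R}\Gamma_m(M)=\dim_R M,
\]
where the first equality is Lemma~\ref{gdim} and the last is formula~(\ref{lidagger}).

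There is no genuine obstacle once this packaging is in place, since Lemma~\ref{gdim} has already been established; the only point requiring care is the grading bookkeeping in the definition of $\Gid$, so that a Gorenstein injective complex attaining $\Gid_R N$ really carries no homology in degrees below $-\Gid_R N$. I would, however, flag one tempting but insufficient route: invoking the Bass-type equality $\Gid_R M=\depth R-\inf M$ (extracted in the proof of Lemma~\ref{gdim}). Together with the general inequality $\dim_R M+\inf M\le \dim R$ this only yields $\Gid_R M\ge \dim_R M-\cmd_R R$, and would therefore hinge on the Cohen--Macaulayness of $R$, which is exactly the open point recalled in the introduction. Passing through ${\bf R}\Gamma_m(M)$, whose support is $\{m\}$ and whose infimum is governed directly by $\dim_R M$, sidesteps this issue entirely.
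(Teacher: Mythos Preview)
Your proof is correct and takes a different route from the paper's. The paper argues via the dagger dual: it invokes the identity $\Gid_R M = \Gpd_R M^\dagger$ from \cite[Corollary~6.4]{c.h.f.functurial}, then uses the obvious bound $\Gpd_R M^\dagger \geq \sup M^\dagger$ together with formula~(\ref{sdagger}). You instead pass through local cohomology, combining Lemma~\ref{gdim} with formula~(\ref{lidagger}) and the equally obvious bound $\Gid_R N \geq -\inf N$. The two arguments are essentially Matlis-dual to one another via~(\ref{LDT}), which yields exactly $-\inf {\bf R}\Gamma_m(M) = \sup M^\dagger$, so the ``obvious bounds'' on the two sides are the same inequality read through that duality. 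Your route has the advantage of being entirely internal to the paper, using only the already-established Lemma~\ref{gdim}; the paper's route is independent of that lemma but imports the identity $\Gid_R(-) = \Gpd_R(-)^\dagger$ from an external source. Your caution about the grading bookkeeping is warranted: the condition in the Preliminaries should read $I_i = 0$ for $i < -n$ rather than $i > -n$ (as the shift formula $\Gid_R\Sigma^s M = -s + \Gid_R M$ stated there confirms), and your argument correctly uses this version.
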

\begin{proof} 
One has $\Gid_RM=\Gpd_RM^{\dagger}$ by~\cite[Corollary 6.4]{c.h.f.functurial}. Obviously we have $\Gpd_RM^{\dagger}\geq\sup M^{\dagger}$. So the claim results from formula~$(\ref{sdagger})$.
\end{proof}

We are now ready to prove

\begin{Proposition}\label{d=gi}Let $(R,m)$ be a local ring admitting a dualizing complex, and let $M\in D^{f}_b(R).$ Then the following statements are equivalent:\begin {itemize}
\item[a)] $M$ is a G-Gorenstein complex;
\item[b)] $\dim_R M = \depth_R M = \Gid_RM$;
\item[c)] The Gorenstein injective dimension of $M$ is finite and $$\depth M=\depth R-\inf M.$$ 
\end{itemize}
\end{Proposition}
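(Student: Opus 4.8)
The plan is to prove the equivalence by establishing a cycle of implications, leveraging the dualizing complex and the characterizations of Gorenstein injective dimension quoted from~\cite{c.h.f.functurial}. The cornerstone is Lemma~\ref{gdim}, which reduces the computation of $\Gid_R M$ to that of the derived local cohomology $\mathbf{R}\Gamma_m(M)$, whose support is concentrated at $m$.

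For the implication $a)\Rightarrow b)$, I would first record that $M$ being Cohen-Macaulay means $\dim_R M=\depth_R M$, so only the equality with $\Gid_R M$ requires work. By Proposition~\ref{G-Gor for local} the modules $H^i_m(M)$ are Gorenstein injective; combined with formulas $(\ref{lidagger})$ and $(\ref{lsdagger})$ and the Cohen-Macaulay hypothesis, $\mathbf{R}\Gamma_m(M)$ is concentrated in the single homological degree $-\dim_R M$, hence isomorphic to $\Sigma^{-\dim_R M} H^{\dim_R M}_m(M)$ with a single Gorenstein injective module. This shows $\Gid_R \mathbf{R}\Gamma_m(M)=\dim_R M$ directly from the definition of Gorenstein injective dimension, and then Lemma~\ref{gdim} gives $\Gid_R M=\dim_R M$, completing $b)$.

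For $b)\Rightarrow c)$, finiteness of $\Gid_R M$ is immediate, and the depth formula follows by substituting $\Gid_R M=\depth_R M$ into the identity $\Gid_R M=\depth R-\inf M$ supplied by~\cite[Theorem 6.3]{c.h.f.functurial} (valid once $\Gid_R M$ is finite), which rearranges to $\depth_R M=\depth R-\inf M$. For $c)\Rightarrow a)$ I would again invoke $\Gid_R M=\depth R-\inf M$, so the hypothesis gives $\Gid_R M=\depth_R M$; together with Lemma~\ref{G-dim}, which yields $\Gid_R M\ge\dim_R M$, and the general inequality $\dim_R M\ge\depth_R M$ (i.e.\ $\cmd_R M\ge0$), we are squeezed into $\dim_R M=\depth_R M$, so $M$ is Cohen-Macaulay. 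It then remains to deduce that each $H^i_m(M)$ is Gorenstein injective and apply Proposition~\ref{G-Gor for local}; since $M$ is now Cohen-Macaulay, $\mathbf{R}\Gamma_m(M)$ reduces to a single module sitting in one degree, and $\Gid_R M=\dim_R M$ forces that module to be Gorenstein injective.

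The main obstacle I anticipate is the final step of $c)\Rightarrow a)$: passing from the \emph{finiteness plus depth} data to the genuine Gorenstein injectivity of every local cohomology module, rather than merely a numerical equality of dimensions. The point is that once Cohen-Macaulayness collapses $\mathbf{R}\Gamma_m(M)$ to $\Sigma^{-\dim_R M}H^{\dim_R M}_m(M)$, the equality $\Gid_R \mathbf{R}\Gamma_m(M)=\dim_R M$ (via Lemma~\ref{gdim}) pins the Gorenstein injective dimension of a \emph{single} module concentrated in its top degree to equal that degree, which by the definition of $\Gid$ means the module itself is Gorenstein injective. One must be careful that $M$ is local throughout (the hypothesis guarantees this) so that Proposition~\ref{G-Gor for local} applies verbatim.
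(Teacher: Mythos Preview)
Your proposal is correct and follows essentially the same route as the paper: the paper also uses Lemma~\ref{gdim} to reduce $\Gid_R M$ to $\Gid_R\mathbf{R}\Gamma_m(M)$, collapses $\mathbf{R}\Gamma_m(M)$ to a single shifted module via Cohen--Macaulayness, and uses~\cite[Theorem 6.3]{c.h.f.functurial} together with Lemma~\ref{G-dim} for the numerical squeeze. The only cosmetic difference is that the paper organizes the argument as two biconditionals $a)\Leftrightarrow b)$ and $b)\Leftrightarrow c)$ (noting that both $a)$ and $b)$ already force $M$ Cohen--Macaulay, so the local-cohomology argument runs symmetrically), whereas you run a cycle $a)\Rightarrow b)\Rightarrow c)\Rightarrow a)$ and therefore establish Cohen--Macaulayness inside $c)\Rightarrow a)$ via the squeeze; the content is identical.
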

\begin{proof}
$a) \Leftrightarrow b)$: Set $\dim_RM=t$. In any case, $M$ is Cohen-Macaulay. So ${\bf R}\Gamma_mM\simeq\Sigma^{-t}\Ho^t_m(M)$ by Proposition~\ref{delta and cmd}. By Lemma~\ref{gdim} we then have $$\Gid_RM=\Gid_R\textstyle  \Sigma^{-t}\Ho^t_m(M)=t+\Gid_R\Ho^t_m(M).$$ This shows that $\Ho^t_m(M)$ is Gorenstein injective if and only if $\Gid_RM=t$, as needed. 

$b) \Leftrightarrow c)$: Because $\Gid M$ is finite, we know from~\cite[Theorem 6.3]{c.h.f.functurial} that $\Gid M=\depth R-\inf M$. Since $\dim_R M\ge \depth_R M$, it 
follows from Lemma~\ref{G-dim} that $\depth_R M=\depth R-\inf M$ if and only if $\dim_R M=\depth_R M=\Gid M$. 

\end{proof}

We immediately recover~\cite[Theorem 3.8]{zagh}.

\begin{Corollary}\label{Zakeri} Let $(R,m)$ be a local ring of dimension $d$ admitting a dualizing complex. If $R$ is Cohen-Macaulay, then a finitely generated $R$-module is 
$G$-Gorenstein if and only if it is a maximal Cohen-Macaulay module of finite Gorenstein injective dimension.
\end{Corollary}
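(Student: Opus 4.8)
The plan is to deduce the corollary directly from Proposition~\ref{d=gi}, matching the conditions listed there against the notion of a maximal Cohen-Macaulay module. Throughout let $M$ be a finitely generated $R$-module, regarded as a complex concentrated in degree zero; assuming $M\neq 0$ we then have $\inf M=\sup M=0$, and the complex-theoretic invariants $\dim_R M$ and $\depth_R M$ specialize to the ordinary Krull dimension and depth of the module $M$. Since $R$ is Cohen-Macaulay, $\depth R=\dim R=d$.

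First I would record the relevant numerical identities. For any finitely generated module one has $\depth_R M\le\dim_R M\le\dim R=d$, so $M$ is maximal Cohen-Macaulay precisely when $\depth_R M=d$, and this equality automatically forces $\dim_R M=\depth_R M=d$. On the other hand, because $\inf M=0$, the depth condition occurring in Proposition~\ref{d=gi}(c) reads $\depth_R M=\depth R-\inf M=d$. Thus the two conditions ``$M$ is maximal Cohen-Macaulay'' and ``$\depth_R M=\depth R-\inf M$'' coincide in the present setting.

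With these observations in hand the corollary is immediate, and I would argue both implications through condition (c) of Proposition~\ref{d=gi}. If $M$ is $G$-Gorenstein, then $\Gid_R M<\infty$ and $\depth_R M=\depth R-\inf M=d$; by the previous paragraph the latter says exactly that $M$ is a maximal Cohen-Macaulay module, and it has finite Gorenstein injective dimension. Conversely, if $M$ is maximal Cohen-Macaulay of finite Gorenstein injective dimension, then $\depth_R M=d=\depth R-\inf M$ together with $\Gid_R M<\infty$ are precisely the hypotheses of Proposition~\ref{d=gi}(c), so $M$ is $G$-Gorenstein.

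Since the whole argument is a translation of definitions, I do not expect any substantive obstacle. The only point deserving care is the reduction carried out in the first paragraph, namely verifying that for a module placed in degree zero the invariants $\dim_R M$, $\depth_R M$ and $\inf M$ reduce to the classical notions, so that Proposition~\ref{d=gi}(c) genuinely recovers the maximal Cohen-Macaulay condition once the Cohen-Macaulayness of $R$ is used to replace $\depth R$ by $d$.
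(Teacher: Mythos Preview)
Your argument is correct and matches the paper's intent: the paper records the corollary as an immediate consequence of Proposition~\ref{d=gi} without further proof, and your derivation via condition~(c) of that proposition is exactly the intended specialization. The only difference is that you spell out the translation $\inf M=0$, $\depth R=d$ explicitly, which the paper leaves implicit.
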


We also observe the following:

\begin{Corollary}\label{CMcor} Let $(R,m)$ be a local ring of dimension admitting a dualizing complex. If $R$ admits a G-Gorenstein module with $\dim_RM=\dim R$,  
then $R$ is Cohen-Macaulay.
\end{Corollary}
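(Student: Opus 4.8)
The plan is to reduce everything to Proposition~\ref{d=gi} applied to the given module. Since $M$ is a (nonzero) finitely generated $R$-module regarded as a complex concentrated in degree $0$, it satisfies $\inf M=\sup M=0$; I would record this at the outset, as it is the only module-specific input needed.

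Because $M$ is $G$-Gorenstein, Proposition~\ref{d=gi} applies. From condition (b) I would extract the chain of equalities $\dim_R M=\depth_R M=\Gid_R M$. From condition (c) I would extract that $\Gid_R M$ is finite and, crucially, that $\depth_R M=\depth R-\inf M$. Substituting $\inf M=0$, this gives $\depth_R M=\depth R$.

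The final step is to combine these with the hypothesis $\dim_R M=\dim R$. From (b) together with the hypothesis, $\depth_R M=\dim_R M=\dim R$; from (c) with $\inf M=0$, $\depth_R M=\depth R$. Chaining these yields $\depth R=\dim R$, and since one always has $\depth R\le\dim R$, this equality means precisely that $R$ is Cohen-Macaulay.

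There is essentially no obstacle here: the corollary is a direct numerical consequence of the characterization in Proposition~\ref{d=gi}, and the only thing one must not overlook is that the module hypothesis forces $\inf M=0$, which is what converts the relation $\depth_R M=\depth R-\inf M$ into the clean identity $\depth_R M=\depth R$. (This also contrasts with the complex case, where a nonzero $\inf M$ would obstruct the same conclusion.)
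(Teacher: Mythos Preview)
Your argument is correct and is exactly the one the paper has in mind: the corollary is stated immediately after Proposition~\ref{d=gi} (and Corollary~\ref{Zakeri}) as a direct observation, with no separate proof given. Your unpacking of conditions (b) and (c) of Proposition~\ref{d=gi} together with $\inf M=0$ is precisely how the implication is meant to be read.
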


\begin{Proposition}\label{ass w}Let $(R,m)$ be a local ring admitting a dualizing complex and let $M$ be a G-Gorenstein complex. Then 
$$\left\{p\in\Supp_RM\mid \dim R\slash p-\inf M_p=\dim_R M\right\}=\Ass R\cap\Supp_RM.$$
\end{Proposition}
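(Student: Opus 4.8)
The plan is to prove the asserted set equality pointwise, i.e.\ to show that for a fixed prime $p\in\Supp_RM$ one has $\dim R\slash p-\inf M_p=\dim_RM$ if and only if $p\in\Ass R$. Both sides are subsets of $\Supp_RM$, so this suffices. The first step is to simplify the defining condition of the left-hand side. Since $M$ is Cohen-Macaulay, formula~$(\ref{ineq})$ applies to every $p\in\Supp_RM$ and gives $\dim_RM=\dim_{R_p}M_p+\dim R\slash p$. Substituting this, the equation $\dim R\slash p-\inf M_p=\dim_RM$ reduces to the clean local identity $\dim_{R_p}M_p=-\inf M_p$. Thus the task becomes to characterize the primes $p\in\Supp_RM$ for which $\dim_{R_p}M_p=-\inf M_p$.

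The next step is to localize. The key point I would establish is that $M_p$ is again a $G$-Gorenstein complex, now over $R_p$: the ring $R_p$ inherits a normalized dualizing complex, $M_p$ is Cohen-Macaulay because $M$ is, and the Gorenstein injectivity requirement of Definition~\ref{G-gorcom} for $M_p$ over $R_p$ ranges only over the primes $q\subseteq p$, so it is a subfamily of the conditions already satisfied by $M$ over $R$ (using $(R_p)_{qR_p}=R_q$ and $(M_p)_{qR_p}=M_q$). With this in hand I apply Proposition~\ref{d=gi} to $M_p$ over $R_p$. Its condition b) gives $\dim_{R_p}M_p=\depth_{R_p}M_p$, and its condition c) gives $\depth_{R_p}M_p=\depth R_p-\inf M_p$.

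Combining these two equalities, $\dim_{R_p}M_p=-\inf M_p$ holds if and only if $\depth R_p-\inf M_p=-\inf M_p$, that is, if and only if $\depth R_p=0$. Since $R$ is Noetherian, $\depth R_p=0$ is equivalent to $pR_p\in\Ass_{R_p}R_p$, hence to $p\in\Ass R$; in other words $\Ass R=\{p\mid\depth R_p=0\}$. Tracing the equivalences backwards, $p$ lies in the left-hand set precisely when $p\in\Ass R$, which together with $p\in\Supp_RM$ yields the claimed equality of sets.

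I expect the only genuinely delicate point to be the verification that $M_p$ is $G$-Gorenstein over $R_p$; once that localization principle is secured, the remainder is a direct manipulation of the dimension and depth identities from Proposition~\ref{d=gi} together with the standard description of $\Ass R$. Accordingly I would concentrate the care on the localization claim, in particular on checking that the Gorenstein injectivity of the relevant local cohomology modules is preserved under localization, which is exactly where the hypothesis that $R$ admits a dualizing complex (via the remark following Definition~\ref{G-gorcom}) renders the distinction between ``$R_p$-module'' and ``$R$-module'' harmless.
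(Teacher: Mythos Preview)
Your proof is correct and follows essentially the same route as the paper's: localize to $M_p$, invoke Proposition~\ref{d=gi} to obtain $\dim_{R_p}M_p=\depth R_p-\inf M_p$, and observe that $\depth R_p=0$ characterizes $p\in\Ass R$. The only cosmetic difference is that for the equivalence between $\dim_{R_p}M_p=-\inf M_p$ and $\dim R\slash p-\inf M_p=\dim_RM$ the paper cites an external result of Christensen, whereas you deduce it directly from formula~(\ref{ineq}), which is already recorded in the paper for Cohen-Macaulay complexes.
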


\begin{proof}Let $p \in \Supp_RM$. Since $M_p$ is $G$-Gorenstein, we now have $$\dim_{R_p}M_p=\depth R_p-\inf M_p$$ 
by Proposition~\ref{d=gi}. Thus $p\in\Ass R$ if and only if $\dim_{R_p}M_p=-\inf M_p$. But $M$ being Cohen-Macaulay,
we know by ~\cite[Theorem 2.3 (d)]{sequence2} that this is further equivalent to $\dim R\slash p-\inf M_p=\dim_R M$. 
\end{proof}

\begin{Proposition}\label{cmdim} Let $(R,m)$ be a local ring admitting a dualizing complex.  If $R$ satisfies Serre's condition $S_2$ and $M\in D^f_b(R)$ is 
a G-Gorenstein complex, then $\dim_RM=\dim R-\sup M$. It follows that $\amp M=\cmd R$. In particular, if $R$ is Cohen-Macaulay, then  any G-Gorenstein complex is isomorphic to a module up to a suspension.
\end{Proposition}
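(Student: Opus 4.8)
The plan is to reduce all three assertions to the single statement that the canonical module satisfies $\dim_R K_M=\dim R$, and to extract this from the $S_2$ hypothesis. Throughout set $d=\dim R$ and $t=\dim_R M$, and assume $M\not\simeq 0$. Since $M$ is $G$-Gorenstein, Proposition~\ref{d=gi} gives $\dim_R M=\depth_R M=\Gid_R M$ together with $\depth_R M=\depth R-\inf M$; in particular $\inf M=\depth R-t$. By Corollary~\ref{propkkm1}(a) one has $\dim_R K_M=t+\sup M$, so the asserted equality $\dim_R M=\dim R-\sup M$ is equivalent to $\dim_R K_M=\dim R$. Granting this, the remaining claims are formal: $\amp M=\sup M-\inf M=(\dim R-t)-(\depth R-t)=\dim R-\depth R=\cmd R$, and when $R$ is Cohen-Macaulay $\cmd R=0$ forces $\sup M=\inf M$, so that $\Ho_i(M)=0$ for $i\neq\sup M$ and $M$ is a module up to a suspension.

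First I would show that $K_M$ lies in the $G$-class. As $M$ is Cohen-Macaulay, Proposition~\ref{yz} gives $M^\dagger\simeq\Sigma^{t}K_M$, whence $\Gpd_R M^\dagger=t+\Gpd_R K_M$. Combining the identity $\Gid_R M=\Gpd_R M^\dagger$ (see~\cite[Corollary 6.4]{c.h.f.functurial}) with $\Gid_R M=t$ yields $\Gpd_R K_M=0$, i.e. $K_M\in G(R)$ (note $K_M=\Ho_t(M^\dagger)\neq 0$ by formula~$(\ref{sdagger})$). A module in the $G$-class is totally reflexive, hence embeds in a finite free module, so $\Ass_R K_M\subseteq\Ass_R R$. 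Since $S_2$ implies $S_1$, the ring has no embedded primes and $\Ass_R R=\Min R$; therefore $\Ass_R K_M\subseteq\Min R$.

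The heart of the matter is to prove that $R$ is equidimensional, i.e. $\dim R\slash q=d$ for every $q\in\Min R$; this is where $S_2$ enters. Fix $q\in\Min R$. Taking $M=R$ in formula~$(\ref{daggerloc})$ gives $D_{R_q}\simeq\Sigma^{-\dim R\slash q}(D_R)_q$, and since $R_q$ is Artinian its normalized dualizing complex $D_{R_q}$ is concentrated in degree $0$; hence $(D_R)_q$ is concentrated in homological degree $\dim R\slash q$. Because $\Ho_i(D_R)=K^i_R$, this means $q\in\Supp_R K^i_R$ precisely when $i=\dim R\slash q$. Now recall Schenzel's characterization of $S_2$ in terms of the deficiency modules of $R$ (see~\cite{schenzellec}): $\dim_R K^i_R\le i-2$ for all $i<d$. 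If some $q\in\Min R$ had $c:=\dim R\slash q<d$, then $q\in\Supp_R K^{c}_R$ would force $c\le\dim_R K^{c}_R\le c-2$, a contradiction. Thus every minimal prime has coheight $d$.

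It remains to combine these two ingredients. Since $K_M\neq 0$ we may choose $p\in\Ass_R K_M\subseteq\Min R$; by equidimensionality $\dim R\slash p=d$, and as $p\in\Supp_R K_M$ this gives $\dim_R K_M=d=\dim R$. By the first paragraph this is exactly $\dim_R M=\dim R-\sup M$, and the remaining assertions follow. I expect the equidimensionality step to be the main obstacle: identifying, via the localization behaviour of the dualizing complex together with Schenzel's $S_2$-bound on the deficiency modules, that no minimal prime of small coheight can survive. The verification that $K_M\in G(R)$ is routine once the identity $\Gid_R M=\Gpd_R M^\dagger$ and its behaviour under suspension are in hand.
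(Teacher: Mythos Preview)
Your reduction to $\dim_R K_M=\dim R$ is exactly the paper's strategy, and your derivation of the remaining assertions from it is the same. The paper reaches $\dim_R K_M=\dim R$ by a different path: it invokes Proposition~\ref{ass w} to see that any $p\in\Supp_R M$ realizing $\dim_R M$ lies in $\Ass R\cap\Supp_R M$, cites \cite[Lemma~1.1]{aoyama} ($S_2\Rightarrow\Ass R=\Assh R$) to force $\dim R/p=\dim R$, and finishes via $\Supp_R K_M=\Supp_R M$ (Proposition~\ref{supprtcm}). Your route---proving $K_M\in G(R)$ directly (this anticipates Theorem~\ref{asly1}), whence $\Ass_R K_M\subseteq\Ass R=\Min R$, and then invoking equidimensionality of $R$---is a legitimate alternative once equidimensionality is secured.

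The gap is in your justification of equidimensionality. Schenzel's bound $\dim_R K^i_R\le i-2$ for $i<d$ is formulated for \emph{equidimensional} rings; without that hypothesis the implication from $S_2$ fails. Indeed, your own computation shows that every minimal prime $q$ lies in $\Supp_R K^{\dim R/q}_R$, so if some $q$ had $c:=\dim R/q<d$ then already $\dim_R K^c_R\ge c$, and the bound cannot hold---it presupposes exactly what you are extracting from it. (The case $k=1$ makes this vivid: $S_1$ certainly does not force equidimensionality, yet the identical formal manoeuvre would ``derive'' it from the bound $\dim_R K^i_R\le i-1$.) The repair is to cite the fact you actually need: for a local ring admitting a dualizing complex, $S_2$ implies $\Ass R=\Assh R$, hence $R$ is equidimensional. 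This is \cite[Lemma~1.1]{aoyama}, precisely the input the paper uses. With that single substitution your argument is complete.
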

\begin{proof}
Recall first that Serre's condition $S_2$ for $R$ implies that $\Ass R=\Assh R$ (see e.g.~\cite[Lemma 1.1]{aoyama}). This together with Proposition~\ref{ass w}
then shows that $\dim R\slash p=\dim R$ for any $p\in\Supp_RM$ with $\dim R\slash p-\inf M_p=\dim_R M$. Because $\Supp_RM=\Supp_RK_M$ by 
Corollary~\ref{supprtcm} a), we get $\dim_RK_M=\dim R$. Thereby the desired formula $\dim_RM=\dim R-\sup M$ follows from Corollary~\ref{propkkm1} a).
Since $\dim_RM=\depth R-\inf M$ by Proposition~\ref{d=gi}, this shows that $\amp M=\cmd R$. The last statement is now obvious.
\end{proof}

This gives immediately the following
\begin{Corollary}\label{s2cm}Let $(R,m)$ be a local ring admitting a dualizing complex and satisfying Serre's condition $S_2.$ If $R$ admits a G-Gorenstein module, then $R$ is Cohen-Macaulay.
\end{Corollary}

Let $R$ be a ring. Recall from~\cite[Definition 2.1]{Auscat} that a complex $C\in D^f_b(R)$ is said to be \textit{semi-dualizing} for $R$ if the homothety morphism $R\rightarrow{\bf R}\Hom_R(C,C)$ is an isomorphism in $D(R)$. It is natural to ask when a G-Gorenstein complex is semi-dualizing. 

\begin{Proposition}\label{semidualg-gor}Let $(R,m)$ be a local ring admitting a dualizing complex and let $M\in D^f_b(R)$ be a G-Gorenstein complex. Then the following statements are equivalent:
\begin{itemize}
\item[a)]$M$ is a semi-dualizing complex;
\item[b)]$M$ is a dualizing complex;
\item[c)]$K_M \cong R$;
\item[d)]$K_M$ is a semi-dualizing module.
\end{itemize}
\end{Proposition}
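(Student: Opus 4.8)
The plan is to establish the four statements as a single cycle $a)\Rightarrow b)\Rightarrow c)\Rightarrow d)\Rightarrow a)$, resting on two consequences of $M$ being $G$-Gorenstein. First, $M$ is Cohen-Macaulay, so Proposition~\ref{yz} gives $M^\dagger\simeq\Sigma^tK_M$ with $t=\dim_RM$; equivalently $K_M\simeq\Sigma^{-t}M^\dagger$, and by dagger duality $M\simeq(M^\dagger)^\dagger\simeq\Sigma^{-t}K_M^\dagger$. Second, by Proposition~\ref{d=gi} the complex $M$ has finite Gorenstein injective dimension. Only the latter fact goes beyond Cohen-Macaulayness, and it will be invoked exactly once, in the step $a)\Rightarrow b)$.

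For the key implication $a)\Rightarrow b)$ I would apply the result of Christensen recalled in the introduction (see~\cite[Proposition 8.4]{Auscat}): for a complex of finite $\Gid$, the biduality morphism $L\to{\bf R}\Hom_R({\bf R}\Hom_R(L,M),M)$ can be an isomorphism for some $L\in D^f_b(R)$ only if $M$ is dualizing. I would take $L=R$. Then ${\bf R}\Hom_R(R,M)\simeq M$, so the biduality morphism at $L=R$ is precisely the homothety $R\to{\bf R}\Hom_R(M,M)$, which is an isomorphism exactly because $M$ is semidualizing. Since $\Gid_RM<\infty$, Christensen's theorem forces $M$ to be dualizing.

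For $b)\Rightarrow c)$: a dualizing complex over a local ring is unique up to suspension, so $M\simeq\Sigma^nD_R$, whence $M^\dagger\simeq\Sigma^{-n}{\bf R}\Hom_R(D_R,D_R)\simeq\Sigma^{-n}R$. Comparing with $M^\dagger\simeq\Sigma^tK_M$ and using that $K_M$ is a genuine module (homology concentrated in degree $0$), the two can agree only if $K_M\cong R$ (and $n=-t$). The implication $c)\Rightarrow d)$ is immediate, since $R$ is a semidualizing module. For $d)\Rightarrow a)$, note that $\Sigma^tK_M\simeq M^\dagger$ is semidualizing as soon as $K_M$ is, and that the dagger functor $(-)^\dagger$ is a contravariant self-duality of $D^f_b(R)$ (by the biduality $M\simeq M^{\dagger\dagger}$); hence ${\bf R}\Hom_R(M,M)\simeq{\bf R}\Hom_R(M^\dagger,M^\dagger)\simeq R$ compatibly with the homothety morphisms, so $M$ is semidualizing.

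The main obstacle is the step $a)\Rightarrow b)$. Its content is the identification of the biduality morphism at $L=R$ with the homothety of $M$, so that the cited theorem applies, together with the verification that the finiteness of $\Gid_RM$ is genuinely used there; this is the one place where the $G$-Gorenstein hypothesis contributes more than Cohen-Macaulayness. The remaining implications are formal manipulations with the dagger functor, the identity $M^\dagger\simeq\Sigma^tK_M$, and the uniqueness of the dualizing complex up to suspension.
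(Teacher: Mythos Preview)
Your proof is correct and follows essentially the same cycle $a)\Rightarrow b)\Rightarrow c)\Rightarrow d)\Rightarrow a)$ as the paper, invoking \cite[Proposition~8.4]{Auscat} for $a)\Rightarrow b)$ and the identification ${\bf R}\Hom_R(M,M)\simeq{\bf R}\Hom_R(M^\dagger,M^\dagger)$ for $d)\Rightarrow a)$. The only cosmetic difference is that the paper makes the latter isomorphism explicit via swap, ${\bf R}\Hom_R(M^\dagger,M^\dagger)\simeq{\bf R}\Hom_R(M,M^{\dagger\dagger})\simeq{\bf R}\Hom_R(M,M)$, whereas you appeal directly to $(-)^\dagger$ being a self-duality; these are the same argument.
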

\begin{proof}
\smallskip
\noindent$a)\Rightarrow b)$: Because $M$ has finite Gorenstein injective dimension by Proposition~\ref{d=gi}, we know by~\cite[Proposition 8.4]{Auscat} that 
$M$ must be a dualizing complex.

\smallskip
\noindent$b)\Rightarrow c)$: By the uniqueness of the dualizing complex, we have $M\simeq \Sigma^{-t}D_R$ for some integer $t$. Then 
$M^{\dagger}\simeq \Sigma^{t}R$. Hence $\dim_RM=t$ by formula~$(\ref{sdagger})$ so that $K_M\cong\Ho_t( \textstyle  \Sigma^{t}R)\cong R$.

\smallskip
\noindent$c)\Rightarrow d)$: This is clear.

\smallskip
\noindent $d)\Rightarrow a)$: By Proposition~\ref{yz} $K_M\simeq\Sigma^{-\dim_RM}M^{\dagger}$. Using ``swap'' (see~\cite[A.4.22]{C.L.N}) we then obtain
\begin{align*}
{\bf R}\Hom_R(K_M,K_M)&\simeq{\bf R}\Hom_R(M^{\dagger},M^{\dagger})\\
&\simeq{\bf R}\Hom_R(M,M^{\dagger\dagger})\\&\stackrel{}\simeq {\bf R}\Hom_R(M,M),
\end{align*}
which implies the claim. 
\end{proof}

Let $(R,m)$ be a local ring  admitting a dualizing complex and let $M\in D^f_b(R)$ be a $G$-Gorenstein complex. We know by Proposition~\ref{semidualg-gor} 
that the biduality morphism $L\longrightarrow {\bf R}\Hom_R({\bf R}\Hom_R(L,M),M)$ cannot be an isomorphism for $L\in D^f_b(R)$ unless $M$ is dualizing. 
However, we will prove in Theorem~\ref{2} below that $\depth L$ is nevertheless preserved if $L$ has finite projective or injective dimension, and that this 
property characterizes Gorenstein complexes among the complexes of finite Gorenstein injective dimension. We first need a lemma.

\begin{Lemma}\label{1}Let $(R,m)$ be a local ring admitting a dualizing complex. If a complex $M\in D^f_b(R)$ has finite Gorenstein injective dimension, then
$$\width_R{\bf R}\Hom_R(L,M)=\depth_RL-\Gid_RM$$ for all complexes $L\in D_b(R)$ of finite projective or injective dimension. 
\end{Lemma}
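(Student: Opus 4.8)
The plan is to reduce the stated equality to a symmetric depth--width identity and then treat the two finiteness hypotheses on $L$ separately. Since $M$ has finite Gorenstein injective dimension, $\Gid_R M=\depth R-\inf M$ by \cite[Theorem 6.3]{c.h.f.functurial}, while $\width_R M=\inf M$ because $M\in D^f_b(R)$. Hence the asserted formula is equivalent to
$$\width_R{\bf R}\Hom_R(L,M)=\depth_R L+\width_R M-\depth R. \qquad (\star)$$
First I would record two reductions valid for \emph{every} $L\in D_b(R)$. Writing $(-)^\vee={\bf R}\Hom_R(-,E_R(k))$ for the Matlis dual, the isomorphism ${\bf R}\Hom_R(k,X^\vee)\simeq(k\otimes^L_R X)^\vee$ yields the width--depth interchange $\width_R X=\depth_R X^\vee$. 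Moreover, since $E_R(k)$ has injective dimension $0$, the Hom-evaluation morphism \cite[A.4.24]{C.L.N} gives ${\bf R}\Hom_R(L,M)^\vee\simeq L\otimes^L_R M^\vee$. Combining these, $\width_R{\bf R}\Hom_R(L,M)=\depth_R(L\otimes^L_R M^\vee)$ and $\depth_R M^\vee=\width_R M$.

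If $L$ has finite projective dimension, then it has finite flat dimension, so the depth formula for a derived tensor product with one factor of finite flat dimension (see \cite{C.L.N}) gives $\depth_R(L\otimes^L_R M^\vee)=\depth_R L+\depth_R M^\vee-\depth R$. Together with $\depth_R M^\vee=\width_R M$ this is exactly $(\star)$. Note that this argument establishes $(\star)$ for \emph{every} $M\in D_b(R)$, not just for $M$ of finite Gorenstein injective dimension; I will exploit this below.

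For the finite injective dimension case the plan is to transport the problem to the previous one through Foxby equivalence. A complex of finite injective dimension lies in $\bas(R)$, and ${\bf R}\Hom_R(D_R,-)$ carries it to $L'={\bf R}\Hom_R(D_R,L)\in\Aus(R)$ of finite projective dimension, with $L\simeq D_R\otimes^L_R L'$. Adjunction then gives ${\bf R}\Hom_R(L,M)\simeq{\bf R}\Hom_R(L',N)$ with $N={\bf R}\Hom_R(D_R,M)$, so the instance of $(\star)$ already proved applies to the pair $(L',N)$, giving $\width_R{\bf R}\Hom_R(L,M)=\depth_R L'+\width_R N-\depth R$. It then remains to convert $\depth_R L'$ and $\width_R N$ back. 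Here $\width_R D_R=\inf D_R=\depth R$ by the normalization of $D_R$, and $\depth_R D_R=-\sup{\bf R}\Hom_R(k,D_R)=0$, since ${\bf R}\Hom_R(k,D_R)=k^\dagger$ has $\sup k^\dagger=\dim_R k=0$ by formula~$(\ref{sdagger})$. The tensor--width formula \cite[(A.6.5)]{C.L.N} applied to $M\simeq D_R\otimes^L_R N$ then gives $\width_R N=\width_R M-\depth R$, while the tensor--depth formula applied to $L\simeq D_R\otimes^L_R L'$ (with $L'$ of finite flat dimension) gives $\depth_R L'=\depth_R L+\depth R$. Substituting these recovers $(\star)$.

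The main obstacle is the finite injective dimension case: one cannot apply the tensor--depth formula to $L\otimes^L_R M^\vee$ directly, so the Foxby detour is unavoidable, and it forces careful bookkeeping of the shift contributions of the dualizing complex, encoded in the two equalities $\depth_R D_R=0$ and $\width_R D_R=\depth R$. A secondary point to verify is that ${\bf R}\Hom_R(D_R,-)$ genuinely sends complexes of finite injective dimension to complexes of finite projective dimension; this is where the standing hypothesis that $R$ admits a dualizing complex is essential.
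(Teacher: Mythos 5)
Your route is genuinely different from the paper's. The paper settles the finite injective dimension case in one line from the width formula of Christensen--Holm \cite[Theorem 6.3(iii)]{ascent} together with $\Gid_RM=\depth R-\inf M$, and then derives the finite projective dimension case from it by ``swap'', comparing ${\bf R}\Hom_R(D_R,{\bf R}\Hom_R(L,M))$ with ${\bf R}\Hom_R(L,{\bf R}\Hom_R(D_R,M))$. You go in the opposite direction: you do the projective dimension case first, via Matlis duality and Hom-evaluation reducing $\width_R{\bf R}\Hom_R(L,M)$ to $\depth_R(L\otimes^L_RM^\vee)$ and then the derived depth formula, and you transport the injective dimension case to it through Foxby equivalence and adjunction. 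The bookkeeping with $\depth_RD_R=0$ and $\width_RD_R=\depth R$ is right, and the final arithmetic does recover the identity $\width_R{\bf R}\Hom_R(L,M)=\depth_RL+\width_RM-\depth R$.

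There is, however, a genuine gap in generality. The lemma is stated for $L\in D_b(R)$, but the Hom-evaluation isomorphism $L\otimes^L_R{\bf R}\Hom_R(M,E_R(k))\simeq{\bf R}\Hom_R({\bf R}\Hom_R(L,M),E_R(k))$ of \cite[A.4.24]{C.L.N} requires $L$ to have degreewise finitely generated homology; it already fails for $L=R^{(\mathbb{N})}$, where the left-hand side is a direct sum of copies of $M^\vee$ while the right-hand side is the Matlis dual of a product. A bounded complex of finite injective dimension is typically not in $D^f_b(R)$ (its components are infinite direct sums of modules $E_R(R/p)$), and your injective-dimension case is reduced to the projective-dimension case applied to $L'={\bf R}\Hom_R(D_R,L)$, which inherits this non-finiteness. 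So your argument proves the lemma only for $L\in D^f_b(R)$. That is enough for the specializations $L=R$ and $L=D_R$ used in Theorem~\ref{2} to deduce a) from c) or d), but not for the full implication a)$\Rightarrow$b) there, which quantifies over all $L\in D_b(R)$. Repairing this requires width and depth formulas that do not assume finiteness of homology --- which is precisely what the cited results \cite[Theorems 6.2 and 6.3]{ascent} supply and why the paper leans on them --- or else a restriction of the statement to $L\in D^f_b(R)$. A secondary point: pin down the reference for $\depth_R(L\otimes^L_RM^\vee)=\depth_RL+\depth_RM^\vee-\depth R$; you need the version of the depth formula valid for an arbitrary, non-finitely generated second factor, since $M^\vee$ has Artinian homology.
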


\begin{proof}If $L$ has finite injective dimension, then~\cite[Theorem 6.3 (iii)]{ascent} and~\cite[Theorem 6.3]{c.h.f.functurial} immediately yield 
\begin{align*}
\width_R {\bf R}\Hom_R(L,M)&=\depth_R L +\width_R M-\depth R\\
&=\depth_RL-\Gid_RM.
\end{align*}

In the case $L$ has finite projective dimension, we know by~\cite[Theorem 4.7 (ii)]{ascent} that $\Gid {\bf R}\Hom_R(L,M)$ has finite Gorenstein injective dimension. 
So another application of~\cite[Theorem 6.3 (iii)]{ascent} gives
$$\width_R{\bf R}\Hom_R(D,{\bf R}\Hom_R(L,M))= \width_R{\bf R}\Hom_R(L,M) - \depth R,$$
since $\depth_RD=0$. On the other hand, by~\cite[Theorem 6.2 (ii)]{ascent} 
\begin{align*}
\width_R{\bf R}\Hom_R(L,&{\bf R}\Hom_R(D,M))\\
&=\depth_R L +\width_R{\bf R}\Hom_R(D,M) - \depth R\\
&=\depth_R L-\Gid_RM-\depth R,
\end{align*}
where the second inequality is by the already established case (take $L=D$). Since 
${\bf R}\Hom_R(D,{\bf R}\Hom_R(L,M))$ and ${\bf R}\Hom_R(L,{\bf R}\Hom_R(D,M))$ are
isomorphic by ``swap'' (see~\cite[A.4.22]{C.L.N}), we get
$\width_R{\bf R}\Hom_R(L,M)=\depth_R L-\Gid_RM$, as wanted.

\end{proof}

\begin{Theorem}\label{2}Let $(R,m)$ be a local ring admitting a dualizing complex and let $M\in D^f_b(R)$ be a complex of finite Gorenstein injective dimension. Then
the following statements are equivalent:
\begin{itemize}
\item[a)]$M$ is G-Gorenstein.
\item[b)]If $L\in D_b(R)$ has finite projective or injective dimension, then $$\depth_R {\bf R}\Hom_R({\bf R}\Hom_R(L,M),M) = \depth_RL;$$
\item[c)]$\depth_R {\bf R}\Hom_R(M,M) =  \depth R$;
\item[d)]$\depth_R {\bf R}\Hom_R({\bf R}\Hom_R(D_R,M),M) = 0$.
\end{itemize}
\end{Theorem}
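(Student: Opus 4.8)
The plan is to reduce all four conditions to a single depth formula for $\mathbf{R}\Hom$. The fact I would establish first is that, for every $N\in D^f_b(R)$,
\begin{equation}
\depth_R {\bf R}\Hom_R(N,M)=\width_R N+\depth_R M.
\tag{$\star$}
\end{equation}
Writing $X^\vee={\bf R}\Hom_R(X,E_R(k))$ for the Matlis dual, the identity $\depth_R X=\width_R X^\vee$ holds for every $X\in D_b(R)$: indeed, by Hom-evaluation~\cite{C.L.N} one has $k\otimes_R^L X^\vee\simeq{\bf R}\Hom_R(k,X)^\vee$, whence $\width_R X^\vee=\inf(k\otimes_R^L X^\vee)=-\sup{\bf R}\Hom_R(k,X)=\depth_R X$. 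Applying this to $X={\bf R}\Hom_R(N,M)$, then Hom-evaluation ${\bf R}\Hom_R(N,M)^\vee\simeq N\otimes_R^L M^\vee$ (valid because $N$ is homologically finite), then the width formula $\width_R(Y\otimes_R^L Z)=\width_R Y+\width_R Z$~\cite{C.L.N}, and finally $\width_R M^\vee=\depth_R M$ (the same Matlis identity applied to $M$), gives
\[
\depth_R {\bf R}\Hom_R(N,M)=\width_R\bigl(N\otimes_R^L M^\vee\bigr)=\width_R N+\width_R M^\vee=\width_R N+\depth_R M,
\]
which is $(\star)$. In each use below, $N$ is $M$, ${\bf R}\Hom_R(L,M)$ or ${\bf R}\Hom_R(D_R,M)$; these all lie in $D^f_b(R)$, since $M$ has finite Gorenstein injective dimension and $L$, $D_R$ have finite projective or injective dimension, so the Hom-complexes are bounded (cf.~\cite[Theorem 4.7]{ascent}).

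With $(\star)$ in hand, $a)\Rightarrow b)$ is immediate. For $L$ of finite projective or injective dimension set $N={\bf R}\Hom_R(L,M)\in D^f_b(R)$; combining $(\star)$ with Lemma~\ref{1} yields
\[
\depth_R {\bf R}\Hom_R({\bf R}\Hom_R(L,M),M)=\width_R{\bf R}\Hom_R(L,M)+\depth_R M=\depth_R L-\Gid_R M+\depth_R M.
\]
Since $M$ is $G$-Gorenstein, $\depth_R M=\Gid_R M$ by Proposition~\ref{d=gi}, so the right-hand side equals $\depth_R L$, which is $b)$.

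The remaining implications are short substitutions. Taking $L=R$ in $b)$ gives ${\bf R}\Hom_R(R,M)\simeq M$ and hence $c)$; taking $L=D_R$, which has finite injective dimension and satisfies $\depth_R D_R=0$ (because $D_R^\dagger\simeq R$, so $\depth_R D_R=\inf D_R^\dagger=\inf R=0$ by formula~$(\ref{idagger})$), gives $d)$. For $c)\Rightarrow a)$ I apply $(\star)$ with $N=M$ and use $\width_R M=\inf M$: then $c)$ reads $\inf M+\depth_R M=\depth R$, i.e.\ $\depth_R M=\depth R-\inf M$, which is condition $c)$ of Proposition~\ref{d=gi} and therefore makes $M$ a $G$-Gorenstein complex. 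For $d)\Rightarrow a)$ I apply $(\star)$ with $N={\bf R}\Hom_R(D_R,M)$ together with Lemma~\ref{1} for $L=D_R$, obtaining $\depth_R{\bf R}\Hom_R({\bf R}\Hom_R(D_R,M),M)=-\Gid_R M+\depth_R M$; now $d)$ forces $\depth_R M=\Gid_R M$, and since $\Gid_R M\ge\dim_R M\ge\depth_R M$ by Lemma~\ref{G-dim} and $\cmd_R M\ge0$, all three coincide, so $M$ is $G$-Gorenstein by Proposition~\ref{d=gi}. This closes the cycles $a)\Rightarrow b)\Rightarrow c)\Rightarrow a)$ and $b)\Rightarrow d)\Rightarrow a)$.

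I expect the main obstacle to be the clean justification of $(\star)$, specifically verifying that all the $\mathbf{R}\Hom$-complexes involved are homologically finite and bounded so that Hom-evaluation and the width formula genuinely apply; once $(\star)$ is secured, the four conditions differ only by the choice of $L$ and by the equality $\depth_R M=\Gid_R M$ that characterizes $G$-Gorensteinness.
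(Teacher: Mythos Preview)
Your overall strategy is essentially the paper's: both proofs reduce everything to the identity
\[
\depth_R {\bf R}\Hom_R(N,M)=\width_R N+\depth_R M
\]
combined with Lemma~\ref{1}, and then observe that each of b), c), d) collapses to the single condition $\depth_R M=\Gid_R M$. The paper simply cites this identity from \cite[Proposition~4.6]{iyengar}; you reprove it as $(\star)$ via Matlis duality, and your handling of $c)\Rightarrow a)$ and $d)\Rightarrow a)$ is a harmless variant of the paper's observation that b) for any single $L$ already forces $\depth_R M=\Gid_R M$.

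There is, however, a gap in your justification that $(\star)$ applies in the implication $a)\Rightarrow b)$. Your Matlis-duality argument for $(\star)$ uses the tensor-evaluation isomorphism ${\bf R}\Hom_R(N,M)^\vee\simeq N\otimes_R^L M^\vee$, which genuinely needs $N\in D^f_b(R)$. You then claim that $N={\bf R}\Hom_R(L,M)$ lies in $D^f_b(R)$ for every $L\in D_b(R)$ of finite projective or injective dimension, but this is false: statement b) quantifies over $L\in D_b(R)$, not $L\in D^f_b(R)$, and for such $L$ the complex ${\bf R}\Hom_R(L,M)$ is bounded but its homology is not finitely generated in general. Your reference \cite[Theorem~4.7]{ascent} yields boundedness, not homological finiteness. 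The repair is easy: $(\star)$ actually holds for arbitrary $N$, since swap and adjunction give
\[
{\bf R}\Hom_R(k,{\bf R}\Hom_R(N,M))\simeq{\bf R}\Hom_k\bigl(k\otimes_R^L N,\,{\bf R}\Hom_R(k,M)\bigr),
\]
and taking suprema over the field $k$ yields $(\star)$ with no finiteness hypothesis on $N$. This is exactly what \cite[Proposition~4.6]{iyengar} records and what the paper invokes; once you use this unrestricted version, the rest of your argument goes through unchanged.
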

\begin{proof}

In order to see the equivalence of a) and b) note that  
\begin{align*}\depth_R{\bf R}\Hom_R({\bf R}\Hom_R(L,M),M)&\stackrel{}=\width_R{\bf R}\Hom_R(L,M)+\depth_RM\\&\stackrel{}=\depth_RL-\Gid_RM+\depth_RM.
\end{align*}
The first equality comes from~\cite[Proposition 4.6]{iyengar} while the second one follows from Lemma~\ref{1}. Hence the equation \begin{equation*}\depth_R {\bf R}\Hom_R({\bf R}\Hom_R(L,M),M) = \depth_{R}L \end{equation*} is equivalent to $\depth_RM =\Gid_RM$. Noting that $\Gid_RM=\depth R-\inf M$ by~\cite[Theorem 6.3]{c.h.f.functurial}, the equivalence of $a)$ and $b)$ is then clear by Proposition~\ref{d=gi}. In fact, we observe that in order to a) hold, it is enough that b) holds from some $L$ of finite projective or injective dimension. In particular, we can take $L=R$ or $L=D_R$. So both $c)$ and $d)$ imply a). Since b) trivially implies both c) and d), we are done.
\end{proof}

\section{Two equivalences of categories}

\begin{Notation} Let $R$ be a ring. Let $t\in \mathbb Z$. We denote by $D_{t-GGor}(R)$ the full subcategory of $D^f_b(R)$ of G-Gorenstein complexes of dimension $t$. 
\end{Notation}

Our purpose is to show that both the equivalence of Yekutieli and Zhang considered in Proposition~\ref{yz} and Foxby equivalence restrict to an equivalence between the category 
$D_{t-GGor}(R)$ and the $G$-class $G(R)$.

\begin{Theorem}\label{asly1}Let $(R,m)$ be a local ring admitting a dualizing complex. For any $t\in \mathbb Z$, the equivalence of Proposition~\ref{yz}
induces an equivalence of categories
\[
\xymatrixcolsep{10pc}
\xymatrixrowsep{1pc}
\xymatrix{ 
D_{t-GGor}(R) 
\ar@<1ex>[r]^{K_{-}}& \G(R)^{\mathrm{opp}}\ar@<1ex>[l]^{\Sigma^{-t}(-)^{\dagger}}.\\
} 
\]
Furthermore, the following statements are equivalent for a complex $M\in D^f_b(R)$:
\begin {itemize}
\item[a)] $M\in D_{t-GGor}(R)$;
\item[b)] $M\simeq(\Sigma^{t}K_M)^\dagger$ and $K_M\in \G(R)$;
\item[c)] $M\simeq(\Sigma^{t}K)^\dagger$ for some $K\in \G(R)$.
\end{itemize}

\end{Theorem}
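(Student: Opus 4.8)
The plan is to deduce everything from the Yekutieli--Zhang equivalence of Proposition~\ref{yz}. Since every $G$-Gorenstein complex is by definition Cohen--Macaulay, $D_{t-GGor}(R)$ is a full subcategory of $D_{t-CM}(R)$, on which $K_{-}$ and $\Sigma^{-t}(-)^\dagger$ already furnish quasi-inverse equivalences with $(R\mathrm{-mod})^{\mathrm{opp}}$. To extract the asserted equivalence with $\G(R)^{\mathrm{opp}}$, it therefore suffices to prove the single identification: for $M\in D_{t-CM}(R)$, the complex $M$ is $G$-Gorenstein if and only if $K_M\in\G(R)$. Granting this, I would check that $K_{-}$ carries $D_{t-GGor}(R)$ into $\G(R)^{\mathrm{opp}}$ and that $\Sigma^{-t}(-)^\dagger$ carries $\G(R)^{\mathrm{opp}}$ back into $D_{t-GGor}(R)$ (the latter via the implication c)$\Rightarrow$a) below), so that the Proposition~\ref{yz} equivalence simply restricts; the quasi-inverse property and the commutativity are then inherited.

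The heart of the matter is computing $\Gid_R M$ through $K_M$. By Proposition~\ref{yz}~b) a Cohen--Macaulay complex of dimension $t$ satisfies $M^\dagger\simeq\Sigma^{t}K_M$. Combining the identity $\Gid_R M=\Gpd_R M^\dagger$ (already used in the proof of Lemma~\ref{G-dim}, from~\cite[Corollary 6.4]{c.h.f.functurial}) with the behaviour of Gorenstein projective dimension under suspension, $\Gpd_R\Sigma^{t}K_M=t+\Gpd_R K_M$, I obtain
\[
\Gid_R M = \Gpd_R M^\dagger = \Gpd_R \Sigma^{t} K_M = t+\Gpd_R K_M.
\]
As $M$ is Cohen--Macaulay of dimension $t$ we have $\dim_R M=\depth_R M=t$, so Proposition~\ref{d=gi} says that $M$ is $G$-Gorenstein precisely when $\Gid_R M=t$, i.e. when $\Gpd_R K_M=0$. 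Since $M\not\simeq 0$ forces $K_M\neq 0$, this is exactly the condition $K_M\in\G(R)$. This chain is the key step and also, I expect, the only delicate point: one must keep the suspension bookkeeping consistent and apply Proposition~\ref{d=gi} noting that finiteness of $\Gid_R M$ is automatic once $\Gpd_R K_M=0$ (a nonzero finitely generated Gorenstein projective module has $\Gpd=0$).

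For the equivalence of a), b), c) I would argue cyclically. For a)$\Rightarrow$b): if $M$ is $G$-Gorenstein of dimension $t$, then $M^\dagger\simeq\Sigma^{t}K_M$ by Proposition~\ref{yz}, and dagger biduality gives $M\simeq M^{\dagger\dagger}\simeq(\Sigma^{t}K_M)^\dagger$, while $K_M\in\G(R)$ by the identification just proved. The step b)$\Rightarrow$c) is trivial, taking $K=K_M$. For c)$\Rightarrow$a): from $M\simeq(\Sigma^{t}K)^\dagger$, biduality yields $M^\dagger\simeq\Sigma^{t}K$, concentrated in the single degree $t$, so $\sup M^\dagger=\inf M^\dagger=t$; hence $M$ is Cohen--Macaulay of dimension $t$ by Proposition~\ref{yz}~c) and formula~$(\ref{sdagger})$, and $K_M=\Ho_t(M^\dagger)=K\in\G(R)$. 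The identification then returns that $M$ is $G$-Gorenstein, closing the cycle and completing the proof.
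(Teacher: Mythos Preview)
Your proof is correct, but the key step differs from the paper's. Both arguments reduce to showing that for $M\in D_{t-CM}(R)$ one has $M\in D_{t-GGor}(R)$ if and only if $K_M\in\G(R)$; you obtain this numerically via the duality $\Gid_R M=\Gpd_R M^\dagger$ and Proposition~\ref{d=gi}, computing $\Gid_R M=t+\Gpd_R K_M$, whereas the paper works directly from the definition: by local duality $\Ho^t_m(M)\cong\Hom_R(K_M,E_R(k))$, and then~\cite[Theorem~6.4.2]{C.L.N} says that $\Ho^t_m(M)$ is Gorenstein injective precisely when $K_M$ is Gorenstein flat, i.e.\ $K_M\in\G(R)$. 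Your route has the advantage of packaging the whole argument into a single dimension equality and reusing the characterization of Proposition~\ref{d=gi}, while the paper's route stays closer to Definition~\ref{G-gorcom} and avoids the suspension bookkeeping for $\Gpd$. Either way, the equivalence of a), b), c) then falls out from Proposition~\ref{yz} and biduality exactly as you describe.
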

\begin{proof} 

The equivalence of a),b) and c) is clear as soon as we have established the claimed equivalence of categories. To do the latter, we need to show that
the restriction of the equivalence of Proposition~\ref{yz} makes sense. 

Suppose therefore that $M\in D_{t-GGor}(R)$. Of course $M\in D_{t-CM}(R)$. Now $\Ho^t_m(M)=\Hom_R(K_M,E_R(k))$. Since 
$\Ho^t_m(M)$ is Gorenstein injective, an application of~\cite[Theorem 6.4.2]{C.L.N} shows that $K_M$ is Gorenstein flat. So $K_M\in G(R)$.  

Conversely, take $K\in \G(R)$ and set $M= \Sigma^{-t}K^{\dagger}$. Then $M\in D_{t-CM}(R)$. By local duality $\Ho^t_m(M)\cong\Hom_R(K, E_R(k))$, so that $\Ho^t_m(M)$ is Gorenstein injective by~\cite[Theorem 6.4.2]{C.L.N}. Hence $M\in D_{t-GGor}(R)$ by Corollary~\ref{G-Gor for local} as wanted.

\end{proof}

Let us then consider the Foxby equivalence.

\begin{Theorem}\label{asly2}Let $(R,m)$ be a local ring admitting a dualizing complex. For any $t\in \mathbb Z$, Foxby equivalence induces an equivalence of categories
\[
\xymatrixcolsep{10pc}
\xymatrixrowsep{1pc}
\xymatrix{ 
D_{t-GGor}(R) 
\ar@<1ex>[r]^{H_{-t}({\bf R}\Hom_R(D_R,-))}& \G(R)\ar@<1ex>[l]^{\Sigma^{-t}D_R\otimes^L_R-}.\\
} 
\]
Furthermore, the following statements are equivalent for a complex $M\in D^f_b(R)$:
\begin {itemize}
\item[a)] $M\in D_{t-GGor}(R)$;
\item[b)] $M\simeq \Sigma^{-t}D_R\otimes ^{L}_RN$ for some $N\in \G(R)$;
\item[c)] ${\bf R}\Hom_R(\Sigma^{-t}D_R,M)\simeq N$ for some $N\in \G(R)$.
\end{itemize}
\end{Theorem}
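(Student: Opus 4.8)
The plan is to deduce the asserted equivalence from that of Theorem~\ref{asly1} by composing it with the $R$-dual $\Hom_R(-,R)$. Recall that for $N\in\G(R)$ one has $\Hom_R(N,R)\in\G(R)$, that $\Ext^i_R(N,R)=0$ for $i>0$, and that the biduality map $N\to\Hom_R(\Hom_R(N,R),R)$ is an isomorphism; thus $\Hom_R(-,R)$ is a contravariant self-equivalence of $\G(R)$. The technical device tying the two pictures together is the natural isomorphism
\begin{equation*}
{\bf R}\Hom_R(D_R,M)\simeq{\bf R}\Hom_R(M^{\dagger},R),\qquad M\in D^f_b(R),
\end{equation*}
which follows by writing $M\simeq M^{\dagger\dagger}={\bf R}\Hom_R(M^{\dagger},D_R)$, applying ``swap'' (see~\cite[A.4.22]{C.L.N}), and using that ${\bf R}\Hom_R(D_R,D_R)\simeq R$ because $D_R$ is dualizing.

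I would first settle the implications $a)\Rightarrow b),c)$ and $b)\Rightarrow a)$, which are computational. If $M\in D_{t-GGor}(R)$, then $M$ has finite Gorenstein injective dimension by Proposition~\ref{d=gi}, so $M$ lies in the Bass class and $M\simeq D_R\otimes^L_R{\bf R}\Hom_R(D_R,M)$. Since $M$ is Cohen-Macaulay of dimension $t$, Proposition~\ref{yz} gives $M^{\dagger}\simeq\Sigma^t K_M$, with $K_M\in\G(R)$ by Theorem~\ref{asly1}; feeding this into the displayed isomorphism yields ${\bf R}\Hom_R(D_R,M)\simeq\Sigma^{-t}\Hom_R(K_M,R)$. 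Setting $N=\Hom_R(K_M,R)\in\G(R)$ gives both $M\simeq\Sigma^{-t}D_R\otimes^L_R N$ and ${\bf R}\Hom_R(\Sigma^{-t}D_R,M)\simeq N$, that is, b) and c). Conversely, for $N\in\G(R)$ and $M=\Sigma^{-t}D_R\otimes^L_R N$, the adjunction ${\bf R}\Hom_R(D_R\otimes^L_R N,D_R)\simeq{\bf R}\Hom_R(N,{\bf R}\Hom_R(D_R,D_R))$ computes $M^{\dagger}\simeq\Sigma^t\Hom_R(N,R)$, whence $M$ is Cohen-Macaulay of dimension $t$ with $K_M=\Hom_R(N,R)\in\G(R)$, so $M\in D_{t-GGor}(R)$ by Theorem~\ref{asly1}.

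These computations also identify the two functors: on $D_{t-GGor}(R)$ the lower functor sends $M$ to $H_{-t}({\bf R}\Hom_R(D_R,M))\cong\Hom_R(K_M,R)$, so it is the composite $\Hom_R(-,R)\circ K_{-}$ of the equivalence of Theorem~\ref{asly1} with the self-equivalence $\Hom_R(-,R)$ of $\G(R)$; the two composites of $\Sigma^{-t}D_R\otimes^L_R-$ with $H_{-t}({\bf R}\Hom_R(D_R,-))$ are then seen to be naturally isomorphic to the identities, using the Foxby counit on $D_{t-GGor}(R)$ and the biduality isomorphism for modules in $\G(R)$. Hence the displayed pair is an equivalence of categories.

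The remaining and, I expect, only delicate implication is $c)\Rightarrow a)$, where $M\in D^f_b(R)$ is a priori arbitrary. Using the displayed isomorphism, the hypothesis ${\bf R}\Hom_R(\Sigma^{-t}D_R,M)\simeq N\in\G(R)$ reads ${\bf R}\Hom_R(M^{\dagger},R)\simeq\Sigma^{-t}N$. Since $\Sigma^{-t}N$ has finite Gorenstein projective dimension, the duality theory of Gorenstein dimension for complexes forces $M^{\dagger}$ itself to have finite Gorenstein dimension, hence to be $R$-reflexive; biduality with respect to $R$ then gives $M^{\dagger}\simeq{\bf R}\Hom_R(\Sigma^{-t}N,R)=\Sigma^t\Hom_R(N,R)$ with $\Hom_R(N,R)\in\G(R)$. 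Proposition~\ref{yz} shows $M$ is Cohen-Macaulay of dimension $t$ with $K_M=\Hom_R(N,R)$, and Theorem~\ref{asly1} finishes. The crux is precisely this reflexivity step, namely detecting finite Gorenstein dimension of $M^{\dagger}$ from that of its $R$-dual, which is what lets one recover $M^{\dagger}$, equivalently place $M$ in the Bass class, without assuming at the outset that $M$ has finite Gorenstein injective dimension.
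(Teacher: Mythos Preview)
Your treatment of $a)\Rightarrow b),c)$ and $b)\Rightarrow a)$ is correct and is essentially the paper's argument: both compute, via swap and Theorem~\ref{asly1}, that ${\bf R}\Hom_R(D_R,M)\simeq{\bf R}\Hom_R(M^{\dagger},R)\simeq\Sigma^{-t}\Hom_R(K_M,R)$ and that $(\Sigma^{-t}D_R\otimes^L_RN)^\dagger\simeq\Sigma^t\Hom_R(N,R)$. Your identification of the two functors with $\Hom_R(-,R)\circ K_{-}$ and its inverse is also fine and matches Remark~\ref{GandG}.

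The gap is in $c)\Rightarrow a)$. You correctly isolate the crux --- deducing that $M^{\dagger}$ has finite $G$-dimension from the fact that ${\bf R}\Hom_R(M^{\dagger},R)$ does --- but you do not prove it; ``the duality theory of Gorenstein dimension for complexes forces\ldots'' names no result. This implication is \emph{not} part of the standard $R$-reflexivity package: the functor ${\bf R}\Hom_R(-,R)$ is not faithful on $D^f_b(R)$ when $R$ is not Gorenstein, so knowing that ${\bf R}\Hom_R(L,R)$ is bounded and of finite $G$-dimension does not, by any direct reflexivity argument, force $L$ to be $R$-reflexive. The statement does hold over a ring with a dualizing complex, but the only route I see is to rewrite ${\bf R}\Hom_R(M^{\dagger},R)\simeq{\bf R}\Hom_R(D_R,M)$ via your own swap identity, note that this lies in $\Aus(R)$, and invoke \cite[Theorem~3.3.2(b)]{C.L.N} to conclude $M\in\bas(R)$; from there $\Gpd_R M^{\dagger}=\Gid_R M<\infty$. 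But this is exactly the step the paper takes, applied directly rather than after a detour.

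Indeed, the paper's $c)\Rightarrow a)$ is a single line: since $N\in\G(R)\subset\Aus(R)$, the hypothesis gives ${\bf R}\Hom_R(\Sigma^{-t}D_R,M)\in\Aus(R)$, hence $M\in\bas(R)$ by \cite[Theorem~3.3.2(b)]{C.L.N}, and the Foxby counit $\Sigma^{-t}D_R\otimes^L_R{\bf R}\Hom_R(\Sigma^{-t}D_R,M)\to M$ is then an isomorphism, reducing $c)$ to the already established $b)$. Your route through $R$-reflexivity of $M^{\dagger}$, even once the missing step is filled in, amounts to the same citation wrapped in an extra layer of dagger duality. I would replace your last paragraph with this direct appeal to the Auslander--Bass class result.
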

\begin{proof}

Let us first check that the restriction of Foxby equivalence makes sense. Take $M\in D_{t-GGor}(R)$. Since $M$  by Proposition~\ref{d=gi} is of finite Gorenstein injective dimension,
we know that $M\in B^f(R)$. By Theorem~\ref{asly1} b) we have $M\simeq\Sigma^{-t}K_M^{\dagger}$, where $K_M\in \G(R)$. By~\cite[Lemma 2.7]{ff} and~\cite[Proposition 2.2.2]{C.L.N} we get 
$${\bf R}\Hom_R(\textstyle \Sigma^{-t}D_R,M) \simeq {\bf R}\Hom_R(D_R,K_M^{\dagger}) \simeq {\bf R}\Hom_R(K_M,R) \simeq\Hom_R(K_M,R).$$ 
This shows that $H_{-t}({\bf R}\Hom_R(D_R,M)) \in \G(R)$, as desired.

Conversely, let $N\in G(R)$. Set $M =\Sigma^{-t}D_R\otimes_R^LN$. By~\cite[Lemma 2.7]{ff} and~\cite[Proposition 2.2.2]{C.L.N}
$$M^{\dagger} \simeq\textstyle  \Sigma^t(D_R\otimes_R^LN)^{\dagger}\simeq \textstyle  \Sigma^t {\bf R}\Hom_R(N,R)\simeq\textstyle  \Sigma^t\Hom_R(N,R).$$
By formula~$(\ref{sdagger})$ $\dim M=t$. Since $\Hom_R(N,R)\in G(R)$, we have $M\in D_{t-GGor}(R)$ by Theorem~\ref{asly1} b).

The equivalence of a) and b) is now immediate. It is also clear that b) implies c). To see the converse, recall from~\cite[Theorem 3.3.2 (b)]{C.L.N} that ${\bf R}\Hom_R(\Sigma^{-t}D_R,M)\in A(R)$ implies $M\in B(R)$. By Foxby equivalence one then
has $$M\simeq \textstyle\Sigma^{-t}D_R\otimes_R^L {\bf R}\Hom_R(\textstyle\Sigma^{-t}D_R,M).$$

\end{proof}
\begin{Remark}\label{GandG}It follows from the above proof that the equivalences of Theorem~\ref{asly1} and Theorem~\ref{asly2} are compatible in the sense 
that the diagram mentioned in the Introduction is commutative up to a canonical isomorphisms. In fact, this compatibility can also be seen as a special case of~\cite[Lemma 2.7]{ff}.
\end{Remark}

\begin{Remark}It is easily checked that in the equivalences of Theorem~\ref{asly1} and Theorem~\ref{asly2} Gorenstein complexes correspond to finitely generated free modules.
In particular, this illustrates the fact that Gorenstein complexes form a proper subcategory of the category of $G$-Gorenstein complexes.
\end{Remark}

We will now look at the special case where $R$ is a Cohen-Macaulay ring admitting a canonical module $K_R$. Recall from Proposition~\ref{cmdim} that in this case every $G$-Gorenstein complex is isomorphic
to a module up to a suspension. Moreover, any $G$-Gorenstein module has dimension $\dim R$.

\begin{Notation} If $R$ is a ring, we denote by $GGor(R)$  the category of all G-Gorenstein modules.  
\end{Notation}

\begin{Corollary}\label{firstformodule}Let $(R,m)$ be a Cohen-Macaulay local ring admitting a canonical module $K_R.$ Then there exists a diagram
\[
\xymatrixcolsep{10pc}
\xymatrixrowsep{4pc}
\xymatrix{ 
GGor(R) 
\ar@<1ex>[r]^{K_{-}}
\ar[d]_{}
& \G(R)^{\mathrm{opp}}
\ar@<1ex>[l]^{K_{-}} 
\ar[d]^{\Hom_R(-,R)}
\\
GGor(R)
\ar@<1ex>[u]^{\id}
\ar@<1ex>[r]^{\Hom_R(K_R,-)}
& \G(R) 
\ar@<1ex>[l]^{K_R\otimes_R-} 
\ar@<1ex>[u]^{}
} 
\]
of equivalences of categories, 
where the horizontal arrows are quasi-inverses of each other. The diagram is commutative up to canonical isomorphisms. Furthermore,  if $M$ is a finitely generated $R$-module, then the following statements are equivalent:
\begin{itemize}

\item[a)]$M$ is a G-Gorenstein module;
\item[b)]$M$ is an equidimensional module satisfying  Serre's condition $S_2$ and $K_M\in G(R)$;
\item[c)]$M\cong K_R\otimes_R N$ for some $N\in G(R)$;
\item[d)]$\Hom_R(K_R,M)\in G(R)$.
\end{itemize} 
\end{Corollary}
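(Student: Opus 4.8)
The plan is to deduce the whole statement by specializing Theorem~\ref{asly1}, Theorem~\ref{asly2} and Remark~\ref{GandG} to the Cohen--Macaulay situation with $t=d:=\dim R$, after rewriting every functor of the diagram in the Introduction in elementary module-theoretic terms. First I would note that, $R$ being Cohen--Macaulay, the normalized dualizing complex has $\sup D_R=\inf D_R=d$, so $D_R\simeq\Sigma^{d}K_R$ and $\Sigma^{-t}D_R\simeq K_R$. By Proposition~\ref{cmdim} every $G$-Gorenstein complex is a module up to suspension, and a $G$-Gorenstein module has dimension $d$; hence $D_{d-GGor}(R)$ is identified with $GGor(R)$. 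Keeping track of suspension degrees then shows that for a module $M$ with $\dim_RM=d$ one has $K_M=H_d(M^{\dagger})=\Hom_R(M,K_R)$, while $\Sigma^{-t}K^{\dagger}\simeq\Hom_R(K,K_R)$ for $K\in\G(R)$, so both functors of the top row become the canonical dual $\Hom_R(-,K_R)$; likewise $H_{-t}({\bf R}\Hom_R(D_R,-))=\Hom_R(K_R,-)$ and $\Sigma^{-t}D_R\otimes^{L}_R-=K_R\otimes_R-$. The four arrows then form the equivalences of Theorem~\ref{asly1} and Theorem~\ref{asly2}, the right-hand vertical arrow $\Hom_R(-,R)\colon\G(R)^{\mathrm{opp}}\to\G(R)$ being the standard duality on the $G$-class, and commutativity up to canonical isomorphism is Remark~\ref{GandG}, concretely the ``swap'' isomorphism $\Hom_R(K_R,\Hom_R(K_M,K_R))\cong\Hom_R(K_M,R)$.

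A point to settle throughout is that the relevant derived functors collapse to ordinary ones. For any $N\in\G(R)$, the complex $K_R\otimes^{L}_RN=\Sigma^{-t}D_R\otimes^{L}_RN$ is, by Theorem~\ref{asly2} and Proposition~\ref{cmdim}, a $G$-Gorenstein module concentrated in degree $0$, so $\Tor^R_{i}(K_R,N)=0$ for $i>0$ and $K_R\otimes^{L}_RN\simeq K_R\otimes_RN$; this makes a)$\Leftrightarrow$c) a direct translation of Theorem~\ref{asly2}(b). For d), the implication a)$\Rightarrow$d) is immediate, since a $G$-Gorenstein $M$ lies in the Bass class, whence $\Ext^i_R(K_R,M)=0$ for $i>0$ and $\Hom_R(K_R,M)$ computes ${\bf R}\Hom_R(K_R,M)\in\G(R)$ by Theorem~\ref{asly2}(c). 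For the converse I would first show that $\Hom_R(K_R,M)\in\G(R)$ forces $M\in B(R)$, so that once more $\Hom_R(K_R,M)\simeq{\bf R}\Hom_R(K_R,M)$, and then conclude by Theorem~\ref{asly2}(c).

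The genuinely new content is a)$\Leftrightarrow$b), and the plan here rests on two ingredients. First, for a finitely generated module the top module of deficiency equals the canonical dual, $K_M=\Hom_R(M,K_R)$, as soon as $\dim_RM=d$; and since a nonzero module in $\G(R)$ is maximal Cohen--Macaulay of dimension $d$, the hypothesis $K_M\in\G(R)$ already forces $\dim_RM=d$. Second, writing $(-)^{\vee}=\Hom_R(-,K_R)$, the classical theory of canonical modules gives that $M$ is equidimensional and satisfies Serre's condition $S_2$ if and only if the canonical biduality $M\to M^{\vee\vee}$ is an isomorphism (cf.~\cite{aoyama}), while every maximal Cohen--Macaulay module, in particular every module in $\G(R)$, is canonically reflexive. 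Granting these, for a)$\Rightarrow$b) I would invoke Theorem~\ref{asly1}(b) to get $K_M\in\G(R)$ and $M\cong\Hom_R(K_M,K_R)=K_M^{\vee}$, whence $M^{\vee\vee}\cong K_M^{\vee\vee\vee}\cong K_M^{\vee}\cong M$ because $K_M$ is canonically reflexive, so $M$ is equidimensional and $S_2$. For b)$\Rightarrow$a) the biduality $M\cong M^{\vee\vee}=\Hom_R(K_M,K_R)$ with $K_M\in\G(R)$ exhibits $M$ in the form of Theorem~\ref{asly1}(c), so $M\in GGor(R)$; this also matches Corollary~\ref{Zakeri}, by which a $G$-Gorenstein module is exactly a maximal Cohen--Macaulay module of finite Gorenstein injective dimension.

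I expect the two main obstacles to be the inputs just described: correctly identifying $K_M$ with the canonical dual $\Hom_R(M,K_R)$ (via Lemma~\ref{changcanonical} and the local-duality isomorphism $\Ho^i_m(M)\cong\Hom_R(K^i_M,E_R(k))$) together with the classical equivalence between ``equidimensional $+$ $S_2$'' and canonical biduality $M\cong M^{\vee\vee}$, and, in the converse half of a)$\Leftrightarrow$d), the reduction of ${\bf R}\Hom_R(K_R,M)$ to the ordinary $\Hom_R(K_R,M)$ by placing $M$ in the Bass class. Once these identifications are secured, the remaining work---the suspension bookkeeping, the collapse of the forward derived functors, and the verification of commutativity---is routine.
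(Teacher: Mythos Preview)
Your approach is essentially the paper's: specialize Theorems~\ref{asly1} and~\ref{asly2} and Remark~\ref{GandG} to $t=d=\dim R$, collapse the derived functors to ordinary ones on the relevant domains, and handle a)$\Leftrightarrow$b) via the characterization of equidimensional $S_2$ modules by canonical biduality $M\cong K_{K_M}=M^{\vee\vee}$. The only differences are in the citations---the paper invokes \cite[Corollaries~2.12 and~2.16]{c.h.f.functurial} to collapse ${\bf R}\Hom_R(D_R,-)$ and $D_R\otimes^L_R-$, and \cite[Lemma~1.9, Proposition~1.1.4]{schenzel} rather than \cite{aoyama} for the $S_2$/biduality equivalence---and your caution about d)$\Rightarrow$a) is apt, since the paper simply asserts this ``follows directly from Theorem~\ref{asly2}'' without isolating the passage from $\Hom_R(K_R,M)$ to ${\bf R}\Hom_R(K_R,M)$.
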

\begin{proof}

Set $d=\dim R$. This is the diagram mentioned in Remark~\ref{GandG} in the case $t=d$. Indeed, $D_R\simeq\Sigma^dK_R$ by the Cohen-Macaulayness of $R$. If $N \in G(R)$, then by the Auslander-Bridger 
formula (see~\cite[Theorem 1.4.8]{C.L.N}) 
$N$ is a Cohen-Macaulay module of dimension $d$. So $\Sigma^{-d}N^{\dagger}\simeq K_N$. Moreover, using~\cite[Corollary 2.12]{c.h.f.functurial}, we now observe that 
$${\bf R}\Hom_R(D_R,M)\simeq \Hom_R(D_R,M) \simeq \Sigma^{-d}\Hom_R(K_R,M)$$
whereas by~\cite[Corollary 2.16]{c.h.f.functurial} 
$$\Sigma^{-d}D_R\otimes^L_RN\simeq \Sigma^{-d}D_R\otimes_R N \simeq K_R\otimes_R N$$
for all $N \in G(R)$.

To see the equivalence of a) and b), we can use the diagram. Indeed, if $M$ is $G$-Gorenstein, then $M\cong K_{K_M}$, where $K_M\in G(R)$. Note that the module 
 $K_{K_M}$ is equidimensional and satisfies $S_2$ by~\cite[Lemma 1.9, c) and e)]{schenzel}. Conversely, if $M$ is an equidimensional module satisfying  
$S_2$, then $M\cong K_{K_M}$ by~\cite[Proposition 1.1.4]{schenzel}. The equivalence of a), c) and d) follows directly from Theorem~\ref{asly2}. 

\end{proof}

As an application of Theorem~\ref{asly2} we will give one more criterium for a complex of finite Gorenstein injective dimension to be $G$-Gorenstein. For this,
we need the following well-known lemma, which we prove here for the convenience of the reader.

\begin{Lemma}\label{zero}Let $(R,m)$ be a local ring admitting a dualizing complex. If $M\in D^f_b(R)$, then
$${\bf R}\Hom_R(E_R(k),M)\simeq{\bf R}\Hom_R(D_R,M)\otimes_R\hat{R}.$$
\end{Lemma}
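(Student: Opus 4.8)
The plan is to realize $E_R(k)$ as the local cohomology of the dualizing complex and then to transport the completion across the $\Hom$ by adjunction. First I would establish that $E_R(k)\simeq {\bf R}\Gamma_m(D_R)$. This follows from the local duality formula~$(\ref{LDT})$ applied to $M=D_R$: since $D_R$ is dualizing, the homothety morphism gives $D_R^{\dagger}={\bf R}\Hom_R(D_R,D_R)\simeq R$, so that ${\bf R}\Gamma_m(D_R)\simeq\Hom_R(D_R^{\dagger},E_R(k))\simeq\Hom_R(R,E_R(k))=E_R(k)$, the middle step using that $E_R(k)$ is injective. Combining this with the \v{C}ech description ${\bf R}\Gamma_m(-)\simeq \Co_m(R)\otimes^L_R-$ already invoked in the proof of Lemma~\ref{gdim}, I obtain $E_R(k)\simeq \Co_m(R)\otimes^L_R D_R$.

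Next I would substitute this into the left-hand side and rearrange by adjunction. Using the standard tensor--Hom adjointness (see~\cite{C.L.N}), and the commutativity of $\otimes^L_R$,
\begin{align*}
{\bf R}\Hom_R(E_R(k),M)&\simeq {\bf R}\Hom_R(\Co_m(R)\otimes^L_R D_R,M)\\
&\simeq {\bf R}\Hom_R(D_R,{\bf R}\Hom_R(\Co_m(R),M)).
\end{align*}
The advantage of taking $D_R$ to the outside is that the inner term ${\bf R}\Hom_R(\Co_m(R),M)$ is now a completion applied to $M$ itself, which lies in $D^f_b(R)$ and is therefore as well-behaved as possible.

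Finally I would identify the inner term and pull out the flat base change. Since $\Co_m(R)$ represents ${\bf R}\Gamma_m(R)$, the complex ${\bf R}\Hom_R(\Co_m(R),M)$ is the derived $m$-adic completion of $M$; as $M$ has finitely generated homology and $\hat R$ is flat over $R$, the local homology vanishes in nonzero degrees and one gets ${\bf R}\Hom_R(\Co_m(R),M)\simeq M\otimes_R\hat R$. Because $D_R\in D^f_b(R)$ admits a resolution by finite free modules and $\hat R$ is flat, the tensor-evaluation morphism is an isomorphism, yielding ${\bf R}\Hom_R(D_R,M\otimes_R\hat R)\simeq{\bf R}\Hom_R(D_R,M)\otimes_R\hat R$, which is the desired formula. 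The main obstacle is the identification of ${\bf R}\Hom_R(\Co_m(R),M)$ with the completion $M\otimes_R\hat R$: this is the one genuinely nonformal input (essentially Greenlees--May duality together with the vanishing of higher local homology of a finitely generated module), whereas the realization of $E_R(k)$, the adjunction, and the flat base change are all routine once that step is in place.
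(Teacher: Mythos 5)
Your proposal is correct and follows essentially the same route as the paper: realize $E_R(k)$ as ${\bf R}\Gamma_m(D_R)$ via local duality, use the \v{C}ech description and Hom--tensor adjunction to move $D_R$ outside, identify ${\bf R}\Hom_R(\Co_m(R),M)$ with $M\otimes_R\hat R$ (the paper cites~\cite[Corollary 4.1.1 (ii)]{lipman2} for exactly this Greenlees--May step), and finish with tensor evaluation. You have merely spelled out the intermediate justifications in more detail than the paper's four-line chain of isomorphisms.
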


\begin{proof}By local duality, adjointness, \cite[Corollary 4.1.1 (ii)]{lipman2} and tensor evaluation (see~\cite[A.4.23]{C.L.N}), we get
\begin{align*}
{\bf R}\Hom_R(E_R(k),M)
&\simeq{\bf R}\Hom_R({\bf R}\Gamma_m(D_R),M)\\
&\simeq{\bf R}\Hom_R(D_R,{\bf R}\Hom_R({\bf R}\Gamma_m(R),M))\\
&\simeq{\bf R}\Hom_R(D_R,M\otimes_R\hat{R})\\
&\simeq{\bf R}\Hom_R(D_R,M)\otimes_R\hat{R}.
\end{align*} 
\end{proof}

We are now ready to prove the promised criterium. It is related to~\cite[Theorem 3.11 (v)]{sharp}.

\begin{Proposition}Let $(R,m)$ be a local ring admitting a dualizing complex. If $M\in D^f_b(R)$ has finite Gorenstein injective dimension, then the
following statements are equivalent:
\begin{itemize}
\item[a)]$M$ is G-Gorenstein of dimension $t$;
\item[b)] There exists a Gorenstein injective module $I$ and natural isomorphisms
\begin{equation*}{\bf R}\Hom_R(L,M)\simeq\textstyle  \Sigma^{-t}\Hom_R(L,I)
\end{equation*}
for all bounded complexes $L$ with $\Supp_RL=\left\{m\right\}$ consisting of either injective modules or projective modules.
\end{itemize}
\end{Proposition}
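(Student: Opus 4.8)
The plan is to take $I=\Ho^t_m(M)$ as the required Gorenstein injective module in the forward direction, and to recover the $G$-Gorenstein property from the single test complex $L=E_R(k)$ in the converse. For $a)\Rightarrow b)$, assume $M$ is $G$-Gorenstein of dimension $t$. Then $M$ is Cohen-Macaulay, so ${\bf R}\Gamma_m(M)\simeq\Sigma^{-t}\Ho^t_m(M)$ by Proposition~\ref{delta and cmd}, and $I:=\Ho^t_m(M)$ is Gorenstein injective by Proposition~\ref{G-Gor for local}. I would then prove a support lemma: if $\Supp_R L=\{m\}$ then the natural map ${\bf R}\Hom_R(L,{\bf R}\Gamma_m(M))\to{\bf R}\Hom_R(L,M)$ is an isomorphism. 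Writing $Y$ for the cone of ${\bf R}\Gamma_m(M)\to M$, one has ${\bf R}\Gamma_m(Y)\simeq 0$ since ${\bf R}\Gamma_m$ is idempotent; and each admissible $L$ satisfies $L\simeq{\bf R}\Gamma_m(L)$, because its terms are either $m$-torsion injectives (copies of $E_R(k)$) or it is a perfect complex supported at $m$. Using ${\bf R}\Gamma_m(L)\simeq{\bf R}\Gamma_m(R)\otimes^L_R L$, adjunction and the Matlis--Greenlees--May isomorphism $L\Lambda_m(Y)\simeq L\Lambda_m({\bf R}\Gamma_m(Y))\simeq 0$ give ${\bf R}\Hom_R(L,Y)\simeq{\bf R}\Hom_R(L,L\Lambda_m(Y))\simeq 0$, proving the lemma.

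Combining the lemma with the Cohen-Macaulay structure yields ${\bf R}\Hom_R(L,M)\simeq\Sigma^{-t}{\bf R}\Hom_R(L,I)$, so it remains to reduce ${\bf R}\Hom_R(L,I)$ to $\Hom_R(L,I)$. When $L$ is a bounded complex of projectives this is immediate from $K$-projectivity; when $L$ is a bounded complex of copies of $E_R(k)$ it follows from $\Ext^{>0}_R(E_R(k),I)=0$, which is exactly the defining exactness of a complete injective resolution of the Gorenstein injective module $I$ tested against the injective module $E_R(k)$. The resulting isomorphism is natural, being built from the natural maps ${\bf R}\Gamma_m(M)\to M$ and ${\bf R}\Hom_R(L,I)\to\Hom_R(L,I)$.

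For $b)\Rightarrow a)$, apply b) to $L=E_R(k)$, whose support is $\{m\}$. The right-hand side $\Sigma^{-t}\Hom_R(E_R(k),I)$ is a shift of a single module, so ${\bf R}\Hom_R(E_R(k),M)$ has homology concentrated in degree $-t$. By Lemma~\ref{zero}, ${\bf R}\Hom_R(E_R(k),M)\simeq{\bf R}\Hom_R(D_R,M)\otimes_R\hat R$, and faithful flatness of $\hat R$ forces ${\bf R}\Hom_R(D_R,M)\simeq\Sigma^{-t}N$ for $N:=H_{-t}({\bf R}\Hom_R(D_R,M))$. Since $M$ has finite Gorenstein injective dimension we have $M\in\bas^f(R)$, hence ${\bf R}\Hom_R(D_R,M)\in\Aus^f(R)$ and $N$ is finitely generated; moreover ${\bf R}\Hom_R(\Sigma^{-t}D_R,M)\simeq N$ and, comparing the two sides, $\hat N\cong\Hom_R(E_R(k),I)$. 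By Theorem~\ref{asly2}(c) it then suffices to show $N\in\G(R)$.

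This last point is where I expect the real work to lie. I would prove that $\Hom_R(E_R(k),I)$ is Gorenstein flat over $\hat R$, so that faithfully flat descent makes $N$ Gorenstein flat over $R$ and hence, being finitely generated, an object of $\G(R)$. To produce the relevant complete resolution, apply $\Hom_R(E_R(k),-)$ to a complete injective resolution $\mathbf J$ of $I$: because $E_R(k)$ is injective, the defining property of $I$ makes $\Hom_R(E_R(k),\mathbf J)$ exact; each term $\Hom_R(E_R(k),J)$ is a free $\hat R$-module, since $\Hom_R(E_R(k),E_R(R/p))=0$ for $p\neq m$ while $\Hom_R(E_R(k),E_R(k))\cong\hat R$; and $\Hom_R(E_R(k),I)$ appears as a kernel. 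The delicate step, for which I would lean on the Matlis-duality and ascent/descent results of~\cite{C.L.N} and~\cite{ascent} already used in the paper, is verifying that this exact complex of flats is totally acyclic, i.e.\ stays exact after tensoring with an arbitrary injective module; this is the part most likely to demand genuine computation rather than formal manipulation.
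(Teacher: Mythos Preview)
Your approach is essentially the same as the paper's. For $a)\Rightarrow b)$ you set $I=\Ho^t_m(M)$ and pass to ${\bf R}\Gamma_m(M)$ via the support hypothesis, exactly as the paper does (it cites \cite[Proposition~3.2.2]{lipman2} for what you reprove via Greenlees--May); the paper then cites \cite[Corollary~2.12]{c.h.f.functurial} for ${\bf R}\Hom_R(L,I)\simeq\Hom_R(L,I)$, which is your Ext-vanishing argument. One small imprecision: you treat only complexes whose injective terms are copies of $E_R(k)$, but the statement allows arbitrary injective terms. Your argument extends verbatim, since the defining property of a Gorenstein injective module gives $\Ext^{>0}_R(J,I)=0$ for \emph{every} injective $J$, not just $E_R(k)$.

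For $b)\Rightarrow a)$ you again follow the paper's route: test on $L=E_R(k)$, invoke Lemma~\ref{zero}, deduce ${\bf R}\Hom_R(D_R,M)\simeq\Sigma^{-t}N$ with $N$ finitely generated, and finish via Theorem~\ref{asly2}(c). The only substantive difference is in showing $N\in\G(R)$. The paper simply cites \cite[Corollary~3.7(c)]{ascent} for the Gorenstein flatness of $\Hom_R(E_R(k),I)$, then \cite[Lemma~2.6(a)]{ascent} and \cite[Theorem~8.7(5)]{avbeijing} for descent along $R\to\hat R$. You instead try to build the complete flat resolution by hand, and here there is a genuine error: your claim that each $\Hom_R(E_R(k),J)$ is a \emph{free} $\hat R$-module is not correct. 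The functor $\Hom_R(E_R(k),-)$ does not commute with infinite direct sums, so the structure decomposition $J\cong\bigoplus_\alpha E_R(R/p_\alpha)$ does not yield $\bigoplus\hat R$; what one obtains is an inverse limit $\varprojlim_n\bigoplus_\beta R/m^n$, which is flat but generally not free. Since proving flatness of the terms and total acyclicity of the resulting complex is precisely the content of the cited result in \cite{ascent}, you are better off invoking it directly, as the paper does.
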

\begin{proof}
 \smallskip
\noindent$a)\Rightarrow b)$: Set $I=\Ho^t_m(M)$. We then know that $I$ is Gorenstein injective and ${\bf R}\Gamma_m(M)\simeq  \Sigma^{-t}I$. Now~\cite[Proposition 3.2.2]{lipman2}
 and~\cite[Corollary 2.12]{c.h.f.functurial} yield
$${\bf R}\Hom_R(L,M)\simeq{\bf R}\Hom_R(L,{\bf R}\Gamma_m(M))\simeq\textstyle  \Sigma^{-t}\Hom_R(L,I).$$

\smallskip
\noindent$b)\Rightarrow a)$: We want to use Theorem~\ref{asly2} c). Therefore we need to show that ${\bf R}\Hom_R(D_R,M)\simeq\Sigma^{-t}N$ for some $N\in G(R)$. 
We now have $${\bf R}\Hom_R(D_R,M)\otimes_R\hat{R}\simeq {\bf R}\Hom_R(E_R(k),M)$$ by Lemma~\ref{zero}. By assumption $${\bf R}\Hom_R(E_R(k),M)\simeq\textstyle  
\Sigma^{-t}\Hom_R(E_R(k),I).$$ It follows that ${\bf R}\Hom_R(D_R,M)\simeq\Sigma^{-t}N$ for some finitely generated $R$-module $N$. Now $\Hom_R(E_R(k),I)$ is Gorenstein 
flat by~\cite[Corollary 3.7 (c)]{ascent}. So $N\otimes_R\hat{R}$ is Gorenstein flat as an $R$-module. By~\cite[Lemma 2.6 (a)]{ascent} it is then Gorenstein flat also 
as an $\hat{R}$-module.  Therefore $N\in G(R)$ by~\cite[Theorem 8.7, (5)]{avbeijing}.

\end{proof}

\section{$G$-Gorenstein complexes as Gorenstein objects}

Let $\mathcal{C}$ be a class of objects in an abelian category $\mathcal{A}$. Consider an exact complex $$X:\quad \cdots\rightarrow X_{i+1}\stackrel{d_{i+1}}{\rightarrow} X_i \stackrel{d_{i}}{\rightarrow} X_{i-1}\stackrel{d_{i-1}}{\rightarrow}\cdots$$ in $\mathcal{A}$, where $X_i\in \mathcal C$ for all $i\in \mathbb Z$. Recall that $X$ is called \textit{$\mathcal{C}$-totally acyclic} if it is both $\Hom_{\mathcal{A}}(\mathcal{C},-)$-exact and $\Hom_{\mathcal{A}}(-,\mathcal{C})$-exact, i.e., the complexes $\Hom_{\mathcal{A}}(C,X)$ and $\Hom_{\mathcal{A}}(X,C)$ are exact in the category of abelian groups for any object $C$ in $\mathcal{C}$. 
A \textit{$\mathcal{C}$-Gorenstein object} is an object in $\mathcal{A}$ appearing as a kernel in a  $\mathcal{C}$-totally acyclic complex. In this section we want to show that in a certain sense $G$-Gorenstein complexes can be considered as Gorenstein objects in the nonabelian category  $D(R)$. 

We first need a suitable notion of exactness in a triangulated category. Our definition is a special case of the one Beligiannis gives in~\cite[Definition 4.7]{bel} (see also~\cite{asad}). In the definition $\Delta$ refers to the class of all exact triangles in a triangulated category $\mathcal D$ (see~\cite[Example 2.3]{bel}). We will always denote the suspension functor by $\Sigma$.

\begin{Definition} Let $\mathcal D$ be a triangulated category. A $\Delta$-exact complex in $\mathcal D$ is a diagram $$X\colon\quad   \cdots\rightarrow X_{i+1}\stackrel{d_{i+1}}{\rightarrow} X_i \stackrel{d_{i}}{\rightarrow} X_{i-1}\stackrel{d_{i-1}}{\rightarrow}\cdots  $$ of objects and morphisms in $\mathcal D$ such that there exists for all $i\in \mathbb Z$ an exact triangle $$M_{i}\stackrel{f_{i}}{\rightarrow} X_i\stackrel{g_{i}}{\rightarrow} M_{i-1}\rightarrow\textstyle  \Sigma M_{i}$$ where $d_i=f_{i-1}g_i$.
\end{Definition}

\begin{Remark}\label{differentialremark} By~\cite[Proposition 2.4 (a)]{asad}, one has $d_{i-1}d_i=0$ for all $i\in \mathbb Z$. Thus a diagram $X$ as above is indeed a complex.
\end{Remark}

The next two definitions are inspired by~\cite[Definition 3.2 and Definition 3.3]{asad}.

\begin{Definition}\label{exacttriangle} Let $\mathcal D$ be a triangulated category. Let $\mathcal{C}$ be a class of objects in $\mathcal D$. We say that an exact triangle $N\rightarrow M\rightarrow L\rightarrow\Sigma N$ in $\mathcal D$ is $\Hom_{\mathcal D}(\mathcal{C},-)$-exact if the induced sequence of abelian groups \begin{equation*}0\rightarrow\Hom_{\mathcal D}(C,N)\rightarrow\Hom_{\mathcal D}(C,M)\rightarrow\Hom_{\mathcal D}(C,L)\rightarrow 0\end{equation*} is exact for all $C$ in $\mathcal{C}$. The notion of a $\Hom_{\mathcal D}(-,\mathcal{C})$-exact triangle is defined analogously.
\end{Definition}

\begin{Definition} Let $\mathcal D$ be a triangulated category. Let $\mathcal{C}$ be a class of objects in $\mathcal D$. Consider a $\Delta$-exact complex 
$$X:\quad \cdots\rightarrow X_{i+1}\stackrel{d_{i+1}}{\rightarrow} X_i \stackrel{d_{i}}{\rightarrow} X_{i-1}\stackrel{d_{i-1}}{\rightarrow}\cdots  $$ 
in $\mathcal D$, where $X_i\in \mathcal C$ for all $i\in \mathbb Z$. We say that $X$ is totally $\mathcal{C}$-acyclic if all the associated exact triangles 
$$M_{i}\stackrel{f_{i}}{\rightarrow} X_i\stackrel{g_{i}}{\rightarrow} M_{i-1}\rightarrow\textstyle  \Sigma M_{i}$$ are both $\Hom_{\mathcal D}(\mathcal{C},-)$-exact and $\Hom_{\mathcal D}(-,\mathcal{C})$-exact.
\end{Definition}

\begin{Remark}\label{acyclicremark}If $X$ is a $\Delta$-exact complex in $\mathcal D$ whose associated triangles are $\Hom_{\mathcal D}(\mathcal{C},-)$-exact (resp.~$\Hom_{\mathcal D}(-,\mathcal{C})$-exact), then 
by pasting together the corresponding exact sequences of abelian groups, we see that the complex $\Hom_{\mathcal D}(C,X)$ (resp.~$\Hom_{\mathcal D}(X,C)$) is exact for all $C$ in $\mathcal{C}$. 
\end{Remark}

Let $R$ be a ring. Let $t\in \mathbb Z$. We aim next to investigate the relationship  between the notion of $\Delta$-exactness in $D^f_b(R)$ and the usual exactness in the abelian category $D_{t-CM}(R)$ of Cohen-Macaulay complexes of dimension $t$. For this we need some basic facts about t-structures.

\smallskip

Recall therefore from~\cite[D\'efinition 1.3.1]{A} that if $\mathcal D$ is a triangulated category, then a \textit{t-structure} on $\mathcal D$ is a pair $(C_{\geq 0},C_{\leq 0})$ of full subcategories of $\mathcal D$
satisfying the conditions:
\begin{itemize}
\item[1)] $\Sigma C_{\geq0}\subset C_{\geq0}$ and $\Sigma^{-1}C_{\leq 0}\subset C_{\leq 0}$;
\item[2)]If $M\in C_{\geq0}$ and $N\in \Sigma^{-1}C_{\leq 0}$ then $\Hom_{\mathcal D}(M,N)=0$;
\item[3)]If $M\in \mathcal D$, then there is an exact triangle $N\rightarrow M\rightarrow L\rightarrow\Sigma N$ with  $N\in C_{\geq0}$ and $L\in \Sigma^{-1}C_{\leq 0}$.
\end{itemize}
Set $C_{\geq n}=\Sigma^n C_{\geq 0}$ and $C_{\leq n}=\Sigma^n C_{\leq 0}$ for all $n\in \mathbb Z$. The \textit{heart} of the above t-structure is $\mathcal H:=C_{\geq 0}\cap C_{\leq 0}$. The heart 
is an abelian category. For the proof of this and the following fact, we refer to~\cite[Th\'eor\`eme 1.3.6]{A}.

\begin{Fact}\label{trianglefact}A sequence $$0\rightarrow X\stackrel{f}{\rightarrow} Y\stackrel{g}{\rightarrow} Z\rightarrow 0$$ in $\mathcal H$ is exact if and only if there exists
a morphism $h$ such that $$X\stackrel{f}{\rightarrow} Y\stackrel{g}{\rightarrow} Z\stackrel{h}{\rightarrow}\textstyle  \Sigma X$$ is an exact triangle in $\mathcal D$. 
\end{Fact}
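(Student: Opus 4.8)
The plan is to reduce everything to the standard cohomological formalism of a t-structure. First I would bring in the truncation functors $\tau_{\leq n}, \tau_{\geq n}$ and the associated cohomology functors $H^n\colon \mathcal{D}\to\mathcal{H}$ attached to the t-structure $(C_{\geq 0},C_{\leq 0})$, together with the two facts that drive the whole argument: (i) an object $M\in\mathcal{D}$ lies in the heart $\mathcal{H}$ precisely when $H^n(M)=0$ for all $n\neq 0$, in which case the canonical morphism $M\to H^0(M)$ is an isomorphism; and (ii) every exact triangle $A\to B\to C\to\Sigma A$ in $\mathcal{D}$ induces a long exact sequence $\cdots\to H^n(A)\to H^n(B)\to H^n(C)\to H^{n+1}(A)\to\cdots$ in the abelian category $\mathcal{H}$. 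Both belong to the foundational package for t-structures supplied by~\cite[Th\'eor\`eme 1.3.6]{A}; since only the bare axioms appear in the excerpt, invoking this package is really the crux of the matter.

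For the implication triangle $\Rightarrow$ exact sequence, I would take the exact triangle $X\xrightarrow{f} Y\xrightarrow{g} Z\xrightarrow{h}\Sigma X$ and feed it into the long exact sequence (ii). Because $X,Y,Z$ all lie in $\mathcal{H}$, their cohomology is concentrated in degree $0$, with $H^0(X)=X$, $H^0(Y)=Y$, $H^0(Z)=Z$ and everything else vanishing; the long exact sequence therefore collapses to $0\to X\xrightarrow{f} Y\xrightarrow{g} Z\to 0$, which is exactly the assertion that this sequence is exact in $\mathcal{H}$.

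For the converse I would start from an exact sequence $0\to X\xrightarrow{f} Y\xrightarrow{g} Z\to 0$ in $\mathcal{H}$ and complete $f$ to an exact triangle $X\xrightarrow{f} Y\xrightarrow{u} C\xrightarrow{w}\Sigma X$ in $\mathcal{D}$. Applying (ii) and using $X,Y\in\mathcal{H}$, the long exact sequence degenerates to $0\to H^{-1}(C)\to X\xrightarrow{f} Y\to H^0(C)\to 0$ with $H^n(C)=0$ for $n\neq -1,0$; hence $H^{-1}(C)=\Ker f$ and $H^0(C)=\Coker f$. Injectivity of $f$ forces $H^{-1}(C)=0$, so $C\in\mathcal{H}$ by (i). Surjectivity of $g$ together with exactness at $Y$ then gives, via the cokernel description, a canonical isomorphism $\varphi\colon C\xrightarrow{\sim} \Coker f\cong Z$ satisfying $\varphi u=g$ (here one uses that under $C\cong H^0(C)=\Coker f$ the morphism $u$ becomes the projection, while $g$ factors through the same projection by exactness). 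Transporting the triangle along the isomorphism $(\id_X,\id_Y,\varphi)$ then produces the desired exact triangle $X\xrightarrow{f} Y\xrightarrow{g} Z\xrightarrow{h}\Sigma X$ with $h=w\varphi^{-1}$, using that any triangle isomorphic to an exact one is exact.

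The main obstacle is not these two directions, which are formal once the cohomological machinery is available, but the machinery itself: constructing the truncations from axiom (3), checking that the $H^n$ land in $\mathcal{H}$, and—most delicately—producing the long exact sequence (ii) through the octahedral axiom. This is exactly the content of~\cite[Th\'eor\`eme 1.3.6]{A}, so in practice I would quote that result rather than reprove it. The only genuinely new bookkeeping is the identification $\varphi u=g$ of the third morphism, which must be checked so that the resulting triangle carries the prescribed maps $f$ and $g$, and not merely maps abstractly isomorphic to them.
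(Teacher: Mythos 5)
Your proposal is correct. Bear in mind, though, that the paper does not actually prove this Fact: it explicitly defers both the abelianness of the heart and this statement to~\cite[Th\'eor\`eme 1.3.6]{A}, where the claim appears essentially verbatim as part of the theorem's statement. What you do instead is outsource only the lower-level t-structure machinery (truncations, the cohomological functors $H^n$, the long exact sequence) and then derive the Fact from it; that is the standard proof and your two directions are sound, including the bookkeeping $\varphi u=g$, which follows from naturality of $M\to H^0(M)$ and the universal property of the cokernel. One point deserves care: your claim (i), that $H^n(M)=0$ for all $n\neq 0$ forces $M\in\mathcal H$, fails for arbitrary objects of a degenerate t-structure, and you invoke it to place the cone $C$ in the heart. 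It is nevertheless valid where you use it, because the rotated triangle $Y\rightarrow C\rightarrow \Sigma X\rightarrow \Sigma Y$ exhibits $C$ as an extension of an object of $\Sigma\mathcal H$ by an object of $\mathcal H$, and both halves of a t-structure are closed under extensions; hence $C$ is bounded for the t-structure and the vanishing of $H^{-1}(C)=\Ker f$ does put it in the heart. (In the paper's application the t-structure on $D^f_b(R)$ is non-degenerate, so the issue is moot there, but if you state (i) as a general fact you should either add that hypothesis or argue via extension-closedness as above.) In short: your route buys an actual argument at the cost of re-deriving what the citation already asserts; the paper's route is just the citation.
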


\begin{Lemma}\label{123}Let $(R,m)$ be a local ring admitting a dualizing complex. For any $t\in \mathbb Z$, there is a t-structure on $D^f_b(R)$ whose heart is $D_{t-CM}(R)$. 
\end{Lemma}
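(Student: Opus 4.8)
The plan is to obtain the required t-structure by transporting the standard t-structure on $D^f_b(R)$ along the dagger duality, twisted by a suspension. Concretely, I set $G=\Sigma^{-t}(-)^{\dagger}$. First I would record that $G$ is a contravariant triangulated self-equivalence of $D^f_b(R)$: the functor $(-)^{\dagger}={\bf R}\Hom_R(-,D_R)$ preserves $D^f_b(R)$ and carries exact triangles to exact triangles, it satisfies $(\Sigma M)^{\dagger}\simeq\Sigma^{-1}M^{\dagger}$, and the dagger duality $M\simeq M^{\dagger\dagger}$ is natural. From these one reads off the two structural identities $G\Sigma\simeq\Sigma^{-1}G$ and $G^2\simeq\id$, so that $G$ is a duality and in particular induces $\Hom_{D(R)}(M,N)\cong\Hom_{D(R)}(GN,GM)$.

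Next I recall the standard t-structure $(C^0_{\geq 0},C^0_{\leq 0})$ on $D^f_b(R)$, where $C^0_{\geq 0}=\{X\mid \inf X\geq 0\}$ and $C^0_{\leq 0}=\{X\mid \sup X\leq 0\}$, with truncation triangles $\tau_{\geq n}X\to X\to\tau_{\leq n-1}X\to\Sigma\tau_{\geq n}X$ and heart the category of finitely generated modules. Using $G$ I then define
$$C_{\geq 0}=\{M\mid GM\in C^0_{\leq 0}\},\qquad C_{\leq 0}=\{M\mid GM\in C^0_{\geq 0}\},$$
which, by the formulas $\sup M^{\dagger}=\dim_RM$ and $\inf M^{\dagger}=\depth_RM$, amount to $C_{\geq 0}=\{M\mid \dim_RM\leq t\}$ and $C_{\leq 0}=\{M\mid \depth_RM\geq t\}$. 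Since $\depth_RM\leq\dim_RM$ for $M\not\simeq 0$, the heart $C_{\geq 0}\cap C_{\leq 0}$ consists exactly of the complexes with $\dim_RM=\depth_RM=t$, that is, of $D_{t-CM}(R)$ by Proposition~\ref{yz}.

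It then remains to verify the three axioms. Axioms (1) and (2) follow formally: the inclusions $\Sigma^{-1}C^0_{\leq 0}\subset C^0_{\leq 0}$ and $\Sigma C^0_{\geq 0}\subset C^0_{\geq 0}$ give (1) via $G\Sigma\simeq\Sigma^{-1}G$, while applying $G$ converts the orthogonality $\Hom_{D(R)}(C^0_{\geq 1},C^0_{\leq 0})=0$ of the standard t-structure into axiom (2). For axiom (3) I would, given $M$, dualize the standard truncation triangle of $GM$ split at level one,
$$\tau_{\geq 1}(GM)\to GM\to\tau_{\leq 0}(GM)\to\Sigma\tau_{\geq 1}(GM),$$
applying $G$ to produce an exact triangle $N\to M\to L\to\Sigma N$ with $N=G\,\tau_{\leq 0}(GM)$ and $L=G\,\tau_{\geq 1}(GM)$; here $\tau_{\leq 0}(GM)\in C^0_{\leq 0}$ yields $N\in C_{\geq 0}$, and $\tau_{\geq 1}(GM)\in C^0_{\geq 1}$ yields $L\in\Sigma^{-1}C_{\leq 0}$.

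I expect the only delicate point to be the bookkeeping of suspensions in axiom (3). Because $G$ both reverses a triangle and shifts degrees, splitting the truncation of $GM$ at the wrong level would place $L$ in $C_{\leq 0}$ rather than in the required $\Sigma^{-1}C_{\leq 0}=C_{\leq -1}$; cutting between degrees $0$ and $1$ of $GM$ (equivalently between degrees $t$ and $t+1$ of $M^{\dagger}$) is precisely what compensates the shift introduced by $G\Sigma\simeq\Sigma^{-1}G$. Once this index is pinned down, the remaining checks are routine, and the heart is $D_{t-CM}(R)$ as computed above.
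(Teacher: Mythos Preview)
Your proof is correct and follows essentially the same approach as the paper: both transport the standard t-structure on $D^f_b(R)$ along the dagger duality and identify the heart via the formulas $\sup M^{\dagger}=\dim_R M$ and $\inf M^{\dagger}=\depth_R M$. You supply more detail---explicitly verifying the three axioms and correctly swapping the aisle and coaisle under the contravariant equivalence $G$---whereas the paper simply asserts that dagger duality carries the standard t-structure $(D_{\geq t},D_{\leq t})$ to a new one and then computes its heart.
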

\begin{proof} Let $(D_{\geq t} , D_{\leq t})$ be the so called standard t-structure on $D^f_b(R)$, where 
$$D_{\geq t}=\left\{X\in D^f_b(R)\mid \hbox{$H_i(X)=0$ for $i<t$}\right\}$$
and
$$D_{\leq t}=\left\{X\in D^f_b(R)\mid \hbox{$H_i(X)=0$ for $i>t$}\right\}.$$
By the dagger duality this gives raise to a t-structure $(D'_{\geq t} , D'_{\leq t})$, where
$$D'_{\geq t}=\left\{X\in D^f_b(R)\mid X^\dagger\in D_{\geq t}\right\}$$  
and 
$$D'_{\leq t}=\left\{X\in D^f_b(R)\mid X^\dagger\in D_{\leq t}\right\}.$$
Proposition~\ref{delta and cmd} combined with the local duality now implies that the heart of this t-structure is $D'_{\geq t} \cap D'_{\leq t} =D_{t-CM}(R)$.
\end{proof}

\begin{Proposition}\label{exactexact}Let $(R,m)$ be a local ring admitting a dualizing complex. Let $t\in \mathbb Z$, and set $D=\sum^{-t}D_R$. 
Consider a diagram $$X:\quad \cdots\rightarrow X_{i+1}\stackrel{d_{i+1}}{\rightarrow} X_i \stackrel{d_{i}}{\rightarrow} X_{i-1}\stackrel{d_{i-1}}{\rightarrow}\cdots $$
of objects and morphisms in $D_{t-CM}(R)$. Then $X$ is an exact complex in the abelian category $D_{t-CM}(R)$ if and only if it is a $\Delta$-exact complex in $D^f_b(R)$ with $\Hom_{D(R)}(-,D)$-exact associated triangles. Moreover, the associated triangles are 
$$M_{i}\stackrel{f_{i}}{\rightarrow} X_i\stackrel{g_{i}}{\rightarrow} M_{i-1}\rightarrow\textstyle\Sigma M_{i}$$ where $d_i=f_{i-1}g_i$ and $M_i$ denotes the kernel of $d_i$ in $D_{t-CM}(R)$ for every $i\in \mathbb Z$. 

\end{Proposition}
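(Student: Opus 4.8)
The plan is to transport everything along the contravariant functor $\Phi:={\bf R}\Hom_R(-,D)$ and to read off the answer from the t-structure of Lemma~\ref{123}, whose heart is the abelian category $D_{t-CM}(R)$. Since $D=\Sigma^{-t}D_R$ we have $\Phi\simeq\Sigma^{-t}(-)^{\dagger}$, so for $N\in D^f_b(R)$ formulas~(\ref{sdagger}) and~(\ref{idagger}) give $\sup\Phi(N)=\dim_R N-t$ and $\inf\Phi(N)=\depth_R N-t$; thus the homology of $\Phi(N)$ is concentrated in degree $0$ exactly when $\depth_R N=\dim_R N=t$, that is, when $N\in D_{t-CM}(R)$. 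On the heart $\Phi$ restricts to the Yekutieli--Zhang anti-equivalence $K_{-}$ of Proposition~\ref{yz}, so $\Phi(N)\simeq K_N$ and $\Hom_{D(R)}(N,D)=H_0(\Phi(N))=K_N$ there. This reduces the proposition to deciding when the triangulated syzygies $M_i$ lie in the heart.

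For the forward implication I would suppose $X$ is exact in $D_{t-CM}(R)$ and set $M_i=\Ker(d_i)$ there. Exactness yields short exact sequences $0\to M_i\to X_i\to M_{i-1}\to 0$ in the heart, with $g_i\colon X_i\to M_{i-1}$ epic, $f_{i-1}\colon M_{i-1}\to X_{i-1}$ monic and $d_i=f_{i-1}g_i$. By Fact~\ref{trianglefact} each of these underlies an exact triangle $M_i\to X_i\to M_{i-1}\to\Sigma M_i$, so $X$ is $\Delta$-exact. Applying the triangulated functor $\Phi$ produces triangles all three of whose non-suspended terms $\Phi(M_{i-1}),\Phi(X_i),\Phi(M_i)$ are modules; their long exact homology sequence therefore collapses to $0\to\Hom_{D(R)}(M_{i-1},D)\to\Hom_{D(R)}(X_i,D)\to\Hom_{D(R)}(M_i,D)\to 0$, which is precisely $\Hom_{D(R)}(-,D)$-exactness of the associated triangles.

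The substance lies in the converse, and the main obstacle is that the syzygies $M_i$ coming from the $\Delta$-exact structure a priori live in all of $D^f_b(R)$ and must be forced into the heart. Here I would apply $\Phi$ to the associated triangles to obtain triangles $P_{i-1}\to Q_i\to P_i\to\Sigma P_{i-1}$, where $P_j=\Phi(M_j)$ and $Q_i=\Phi(X_i)=K_{X_i}$ is a \emph{module} because $X_i\in D_{t-CM}(R)$. The hypothesis that the associated triangles are $\Hom_{D(R)}(-,D)$-exact is exactly the exactness of $0\to H_0(P_{i-1})\to H_0(Q_i)\to H_0(P_i)\to 0$. Feeding this into the long exact homology sequence of the triangle, and using that $Q_i$ is concentrated in degree $0$, the injectivity of $H_0(P_{i-1})\to H_0(Q_i)$ forces $H_1(P_i)=0$ while the surjectivity of $H_0(Q_i)\to H_0(P_i)$ forces $H_{-1}(P_{i-1})=0$; since every triangle is $\Hom_{D(R)}(-,D)$-exact, these vanishings hold for all $j$. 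For $n\notin\{0,1\}$ the same long exact sequence gives isomorphisms $H_n(P_i)\cong H_{n-1}(P_{i-1})$, and propagating the vanishing of $H_{\pm 1}(P_j)$ through these isomorphisms---which is legitimate precisely because the complex is indexed over all of $\mathbb Z$, so the vanishing is available at every index at each stage---yields $H_n(P_j)=0$ for all $n\neq 0$ and all $j$. Hence each $P_j$ is a module, and therefore $M_j\in D_{t-CM}(R)$.

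Once all $M_i$ are known to lie in the heart, Fact~\ref{trianglefact} turns the associated triangles back into short exact sequences $0\to M_i\to X_i\to M_{i-1}\to 0$ in $D_{t-CM}(R)$. Since $d_i=f_{i-1}g_i$ with $g_i$ epic and $f_{i-1}$ monic, one reads off $\Ker(d_i)=M_i=\Image(d_{i+1})$, so that $X$ is exact in $D_{t-CM}(R)$ and the $M_i$ are indeed the kernels, giving the ``moreover'' assertion. Beyond this, I expect the only delicate bookkeeping to be the precise identification of $\Hom_{D(R)}(-,D)$-exactness with the degree-zero short exact sequence and the verification that the outward propagation of homological vanishing is valid even though the $P_j$ carry no uniform homological bound.
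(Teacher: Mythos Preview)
Your proof is correct and follows essentially the same route as the paper: both directions hinge on Fact~\ref{trianglefact} for passing between short exact sequences in the heart and exact triangles, and the converse is handled by applying the (shifted) dagger functor to the associated triangles and propagating the vanishing of $H_{\pm 1}$ outward via the isomorphisms $H_n(P_i)\cong H_{n-1}(P_{i-1})$ to force all $M_i$ into $D_{t-CM}(R)$. Your concern about the lack of a uniform homological bound on the $P_j$ is unfounded: the induction is on the degree $n$, and at each step the vanishing holds for \emph{all} indices $j$ simultaneously, so no uniform bound is needed.
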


\begin{proof}
Suppose first that $X$ is exact in $D_{t-CM}(R)$. Let $M_i$ denote the kernel of $d_i$ in $D_{t-CM}(R)$ for every $i\in \mathbb Z$. By Fact~\ref{trianglefact} we get exact triangles $$M_{i}\stackrel{f_{i}}{\rightarrow} X_i\stackrel{g_{i}}{\rightarrow} M_{i-1}\rightarrow\textstyle  \Sigma M_{i},$$ where $d_i=f_{i-1}g_i$. So $X$ is $\Delta$-exact. Let us look at the long exact sequence of homology associated
to the functor $\Hom_{D(R)}(-, D)=H_t((-)^\dagger)$.  Since $M_i\in D_{t-CM}(R)$, we have $K_{M_i}^n=0$ for all $n\not=t$. We thus obtain the exact sequences $$0\rightarrow K_{M_{i-1}}\rightarrow K_{X_i}\rightarrow K_{M_i}\rightarrow 0$$
showing that the triangles are indeed $\Hom_{D(R)}(-,D)$-exact. 

Conversely, let $X$ be $\Delta$-exact complex in $D^f_b(R)$ with $\Hom_{D(R)}(-,D)$-exact associated exact triangles $$M_{i}\stackrel{f_{i}}{\rightarrow} X_i\stackrel{g_{i}}{\rightarrow} M_{i-1}\rightarrow\textstyle\Sigma M_{i}.$$  We will first show that every $M_i\in D_{t-CM}(R)$. Since $K_{X_i}^n=0$ for $n\not=t$, the long exact sequence of homology associated to the functor 
$\Hom_{D(R)}(-,D)$ gives for any $n\not=t$ an isomorphism $K^n_{M_{i}}\cong K^{n-1}_{M_{i-1}}$ and an exact sequence $$0\rightarrow K^{t+1}_{M_{i}}\rightarrow K^{t}_{M_{i-1}}\rightarrow K_{X_i}\rightarrow K^t_{M_i}\rightarrow K^{t-1}_{M_{i-1}}\rightarrow 0.$$ Our triangle now being $\Hom_{D(R)}(-,D)$-exact, we must have $K^{t+1}_{M_{i}}= K^{t-1}_{M_{i-1}}=0$. But then an easy induction shows that $K^n_{M_i}=0$ for all $n\neq t$. Thus $M_i\in D_{t-CM}(R)$. Fact~\ref{trianglefact} then shows that the sequences $$0\rightarrow M_i\stackrel{f_i}{\rightarrow}X_i\stackrel{g_i}{\rightarrow}M_{i-1}\rightarrow 0$$ are exact in $D_{t-CM}(R)$. Finally, we observe that now $\Ker d_{i}=\Ker g_{i}$ and $\Image d_{i+1}=\Image f_{i}$  implying that $X$ is an exact complex in $D_{t-CM}(R)$. 
\end{proof}

We can now prove the promised main result of this section.

\begin{Theorem}\label{dcomplete}Let $(R,m)$ be a local ring admitting a dualizing complex and let $M\in D^f_b(R)$. Let $t\in \mathbb Z$, and set $D=\sum^{-t}D_R$. Then $M$ is a G-Gorenstein complex of dimension $t$ if and only if there exists a $D$-totally acyclic complex $$ \cdots\rightarrow D^{\oplus n_{i+1}}\stackrel{d_{i+1}}{\rightarrow} D^{\oplus n_{i}} \stackrel{d_{i}}{\rightarrow} D^{\oplus n_{i-1}}\stackrel{d_{i-1}}{\rightarrow}\cdots$$ in $D^f_b(R)$ such that $M\simeq M_i$ where $M_i$ belongs to  some associated 
exact triangle $$M_{i}\stackrel{f_{i}}{\rightarrow} D^{\oplus n_{i}}\stackrel{g_{i}}{\rightarrow} M_{i-1}\rightarrow\textstyle  \Sigma M_{i}.$$ 

\end{Theorem}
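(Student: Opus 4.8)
The plan is to transport the entire statement through the contravariant equivalence $K_{-}\colon D_{t-CM}(R)\to(R\mathrm{-mod})^{\mathrm{opp}}$ of Proposition~\ref{yz}, whose quasi-inverse is $\Sigma^{-t}(-)^\dagger$. First I would record that under this equivalence the object $D=\Sigma^{-t}D_R$ corresponds to $R$: indeed $D^\dagger\simeq\Sigma^t{\bf R}\Hom_R(D_R,D_R)\simeq\Sigma^tR$, so $\dim_RD=t$ by formula~(\ref{sdagger}) and $K_D=\Ho_t(D^\dagger)\cong R$. Being an additive equivalence, $K_{-}$ sends $D^{\oplus n}$ to $R^{\oplus n}$; hence any diagram of the $D^{\oplus n_i}$'s in $D_{t-CM}(R)$ corresponds to a diagram of finitely generated free $R$-modules, and by Proposition~\ref{exactexact} a $\Delta$-exact complex whose associated triangles are $\Hom_{D(R)}(-,D)$-exact corresponds to an exact complex $\mathbf F$ of free modules in $R\mathrm{-mod}$.

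The heart of the argument is to match the two homological exactness conditions to the two conditions defining total acyclicity of $\mathbf F$. The first ($\Hom_{D(R)}(-,D)$-exact) already yields exactness of $\mathbf F$, as just noted. For the second I would compute that $\Hom_{D(R)}(D,-)$ corresponds to $\Hom_R(-,R)$. Concretely, for $N\in D_{t-CM}(R)$ we have $N^\dagger\simeq\Sigma^tK_N$, and ``swap'' (see~\cite[A.4.22]{C.L.N}) yields natural isomorphisms
\begin{align*}
{\bf R}\Hom_R(D,N)&\simeq{\bf R}\Hom_R(\Sigma^{-t}D_R,{\bf R}\Hom_R(N^\dagger,D_R))\\
&\simeq{\bf R}\Hom_R(N^\dagger,\Sigma^t{\bf R}\Hom_R(D_R,D_R))\\
&\simeq{\bf R}\Hom_R(K_N,R),
\end{align*}
so that $\Hom_{D(R)}(D,N)\cong\Hom_R(K_N,R)$. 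Applying this to each associated triangle $M_i\to D^{\oplus n_i}\to M_{i-1}\to\Sigma M_i$ --- which corresponds under $K_{-}$ to a short exact sequence $0\to K_{M_{i-1}}\to K_{D^{\oplus n_i}}\to K_{M_i}\to 0$ --- shows that the triangle is $\Hom_{D(R)}(D,-)$-exact precisely when this sequence stays exact after applying $\Hom_R(-,R)$. Splicing, the complex is $\Hom_{D(R)}(D,-)$-exact precisely when $\Hom_R(\mathbf F,R)$ is exact.

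Combining the two translations, a $D$-totally acyclic complex of $D^{\oplus n_i}$'s in $D^f_b(R)$ corresponds under $K_{-}$ to a totally acyclic complex $\mathbf F$ of finitely generated free $R$-modules, the objects $M_i$ corresponding to the syzygies $K_{M_i}$ of $\mathbf F$. Since the syzygies of such complexes are exactly the finitely generated Gorenstein projective modules, i.e.\ the elements of $\G(R)$, I would finish as follows. If $M\simeq M_i$ for some such complex, then $M\in D_{t-CM}(R)$ (by Proposition~\ref{exactexact}) and $K_M=K_{M_i}\in\G(R)$, whence $M$ is $G$-Gorenstein of dimension $t$ by Theorem~\ref{asly1}. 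Conversely, if $M\in D_{t-GGor}(R)$ then $K_M\in\G(R)$ by Theorem~\ref{asly1}; choosing a totally acyclic complex of free modules having $K_M$ as a syzygy and applying the quasi-inverse $\Sigma^{-t}(-)^\dagger$ produces a $D$-totally acyclic complex in which the corresponding object is $\Sigma^{-t}(K_M)^\dagger\simeq M$.

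I expect the main obstacle to be the bookkeeping in the middle step: establishing the isomorphism $\Hom_{D(R)}(D,-)\cong\Hom_R(K_{-},R)$ naturally in $N$, and then checking carefully that $\Hom_{D(R)}(D,-)$-exactness of every associated triangle is equivalent, after splicing, to exactness of the dual complex $\Hom_R(\mathbf F,R)$ --- in other words, that the two triangulated exactness conditions really do correspond to the two conditions (exactness of $\mathbf F$ and of its $R$-dual) in the definition of a totally acyclic complex of projectives. The remaining identifications (additivity of $K_{-}$, transport of short exact sequences under the equivalence, and the characterisation of $\G(R)$ as the class of syzygies of totally acyclic free complexes) are routine.
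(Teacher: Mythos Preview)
Your approach is essentially the same as the paper's: both transport the problem through the contravariant equivalence $K_{-}$ of Proposition~\ref{yz}, reduce to the characterisation of $\G(R)$ via totally acyclic free complexes, and invoke Proposition~\ref{exactexact} to handle the $\Hom_{D(R)}(-,D)$-exactness condition. The one substantive difference lies in how the $\Hom_{D(R)}(D,-)$-exactness is treated. The paper first establishes (via the abelian picture) that each $M_i$ is already $G$-Gorenstein, then invokes Theorem~\ref{asly2}~c) to obtain ${\bf R}\Hom_R(D,M_i)\simeq\Hom_R(K_{M_i},R)$ concentrated in degree~$0$, whence the long exact sequence yields $\Hom_{D(R)}(D,-)$-exactness of each triangle. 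You instead derive the natural isomorphism ${\bf R}\Hom_R(D,N)\simeq{\bf R}\Hom_R(K_N,R)$ directly via swap for \emph{any} $N\in D_{t-CM}(R)$, and use it to match $\Hom_{D(R)}(D,-)$-exactness of the triangles with exactness of $\Hom_R(\mathbf F,R)$ symmetrically in both directions. This is slightly cleaner, as it avoids the mild circularity of appealing to the $G$-Gorensteinness of the $M_i$ inside the proof; note that your swap computation is precisely the isomorphism used in the proof of Theorem~\ref{asly2}, so the two arguments are very close in content.
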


\begin{proof}By Theorem~\ref{asly1} we know that $M\in D_{t-GGor}(R)$ if and only if $M\in D_{t-CM}$ and $K_M\in G(R)$. The latter means that $K_M$ appears as a cokernel in a totally acyclic complex of finitely generated free 
$R$-modules. In the equivalence of categories of Proposition~\ref{yz}
this complex corresponds to a $D$-totally acyclic complex
\begin{equation*}
\cdots\rightarrow D^{\oplus n_{i+1}}\stackrel{d_{i+1}}{\rightarrow} D^{\oplus n_{i}} \stackrel{d_{i}}{\rightarrow} D^{\oplus n_{i-1}}\stackrel{d_{i-1}}{\rightarrow}\cdots
\tag{$\ast$}
\end{equation*}
 in $D_{t-CM}(R)$. It follows that $M\in D_{t-GGor}(R)$ if and only if $M$ is isomorphic to a kernel in this complex.  

In light of Proposition~\ref{exactexact} and Remark~\ref{acyclicremark} it remains to show that if ($\ast$) is $D$-totally acyclic complex in $D_{t-CM}(R)$, then the corresponding
$\Delta$-exact complex in $D^f_b(R)$ has $\Hom_{D(R)}(D,-)$ exact associated triangles. Consider thus the triangles  $$M_{i}\stackrel{f_{i}}{\rightarrow} D^{\oplus n_{i}}\stackrel{g_{i}}{\rightarrow} M_{i-1}\rightarrow\textstyle  \Sigma M_{i},$$ where $d_i=f_{i-1}g_i$ and  $M_i$ is the kernel of $d_i$ in $D_{t-CM}(R)$ for all $i\in \mathbb Z$. Because $K_{M_i}\in G(R)$, the complex $M_i$ is
$G$-Gorenstein.  By Theorem ~\ref{asly2} c) we then have $H_i({\bf R}\Hom_R(D,M_i))=0$ if $i\not=0$. The long exact sequence of homology associated to the functor $\Hom_{D(R)}(D,-)=H_0({\bf R}\Hom_R(D,-))$ therefore yields the exact sequences $$0\rightarrow \Hom_{D(R)}(D,M_i)\rightarrow \Hom_{D(R)}(D,D^{\oplus n_{i}}) \rightarrow \Hom_{D(R)}(D,M_{i-1})\rightarrow 0$$
as needed.

\end{proof}

The following result is an immediate consequence of Theorem~\ref{dcomplete}.
\begin{Corollary}\label{K_R-exact}Let $(R,m)$ be a Cohen-Macaulay local ring admitting a canonical module $K_R.$ Let $M$ be an $R$-module. Then $M$ is a G-Gorenstein module if and only if $M$ is a kernel in an totally $K_R$-acyclic complex 
$$\quad\cdots\rightarrow K_R^{\oplus n_{i+1}}\stackrel{d_{i+1}}{\rightarrow}K_R^{\oplus n_i} \stackrel{d_i}{\rightarrow}K_R^{\oplus n_{i-1}}\rightarrow\cdots $$
of $R$-modules. 
\end{Corollary}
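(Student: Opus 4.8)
The plan is to derive Corollary~\ref{K_R-exact} as the specialization of Theorem~\ref{dcomplete} to the case where $R$ is Cohen-Macaulay with a canonical module, taking $t=\dim R=d$. The first step is to record that the normalized dualizing complex is $D_R\simeq\Sigma^d K_R$, so that $D=\Sigma^{-d}D_R\simeq K_R$; indeed this is exactly the identification used at the start of the proof of Corollary~\ref{firstformodule}. By Proposition~\ref{cmdim}, every $G$-Gorenstein complex over such an $R$ is a module up to a suspension, and any $G$-Gorenstein module has dimension $d$. Thus the condition ``$M$ is a $G$-Gorenstein complex of dimension $t$'' with $t=d$ specializes precisely to ``$M$ is a $G$-Gorenstein module.''

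Next I would translate the $\Delta$-exact complex of triangles appearing in Theorem~\ref{dcomplete} into an honest exact complex of modules. Under the identification $D\simeq K_R$, the objects $D^{\oplus n_i}$ become the modules $K_R^{\oplus n_i}$. The point is that when $t=d$ and $R$ is Cohen-Macaulay, every object of $D_{d\text{-}CM}(R)$ that occurs in the construction is an actual module (concentrated in a single homological degree), so the exact triangles of Proposition~\ref{exactexact} reduce to short exact sequences $0\to M_i\to K_R^{\oplus n_i}\to M_{i-1}\to 0$ in the category of finitely generated $R$-modules, and the $\Delta$-exact complex reduces to a genuine complex of modules $\cdots\to K_R^{\oplus n_{i+1}}\to K_R^{\oplus n_i}\to K_R^{\oplus n_{i-1}}\to\cdots$. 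I would invoke Corollary~\ref{abeli} (or directly Proposition~\ref{yz}), which shows $D_{d\text{-}CM}(R)$ is abelian and, being equivalent to $R$-mod, identifies its exact complexes with exact complexes of modules.

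The third step is to match the two notions of total acyclicity. In Theorem~\ref{dcomplete} the complex is $D$-totally acyclic, meaning its associated triangles are both $\Hom_{D(R)}(D,-)$-exact and $\Hom_{D(R)}(-,D)$-exact. Under $D\simeq K_R$ and the reduction of triangles to short exact sequences of modules, the functor $\Hom_{D(R)}(K_R,-)$ restricted to modules agrees with $\Hom_R(K_R,-)$, and likewise $\Hom_{D(R)}(-,K_R)$ agrees with $\Hom_R(-,K_R)$, since for modules the derived $\Hom$ has no higher cohomology in the relevant degree (this is where one uses that $K_R$ and the $M_i$ are maximal Cohen-Macaulay, so $\Ext^{>0}_R(K_R,M_i)$ and $\Ext^{>0}_R(M_i,K_R)$ vanish appropriately). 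Consequently the two triangle-exactness conditions become exactly the statement that the complex of modules is both $\Hom_R(K_R,-)$-exact and $\Hom_R(-,K_R)$-exact, i.e., that it is totally $K_R$-acyclic in the abelian sense of the opening of Section 6.

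The step I expect to be the main obstacle is the careful verification in the third paragraph that the triangulated-category exactness conditions genuinely coincide with the module-theoretic ones, because one must check that passing from $\Hom_{D(R)}(K_R,-)$ to $\Hom_R(K_R,-)$ loses no information. This amounts to confirming that all the modules $M_i$ arising as kernels are maximal Cohen-Macaulay (which follows since each $M_i\in D_{d\text{-}CM}(R)$ is a module of dimension $d$) and that the higher derived functors vanish on them, so that the short exact sequence of $\Hom_R(K_R,-)$-groups obtained from applying $\Hom_R(K_R,-)$ really is the same as the one coming from $\Hom_{D(R)}(K_R,-)$-exactness, and symmetrically for $\Hom_R(-,K_R)$. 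Once this dictionary is in place, the corollary follows by reading Theorem~\ref{dcomplete} through the equivalences, with no further computation required.
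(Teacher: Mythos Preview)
Your proposal is correct and matches the paper's approach: the paper states the corollary as ``an immediate consequence of Theorem~\ref{dcomplete}'' with no further argument, and your sketch supplies exactly the dictionary one needs to make that reduction explicit (specialize to $t=d$, use $D_R\simeq\Sigma^d K_R$ so $D\simeq K_R$, and observe that the heart $D_{d\text{-}CM}(R)$ consists of modules so that exact triangles become short exact sequences via Fact~\ref{trianglefact}).

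One minor correction: the step you flag as the main obstacle is actually a non-issue. For modules $A,B$ one always has $\Hom_{D(R)}(A,B)=H_0\bigl(\mathbf{R}\Hom_R(A,B)\bigr)=\Hom_R(A,B)$, with no hypothesis on higher $\Ext$ groups. So once you know the $M_i$ are genuine modules, the triangle-exactness condition of Definition~\ref{exacttriangle} (surjectivity of $\Hom_{D(R)}(K_R,X_i)\to\Hom_{D(R)}(K_R,M_{i-1})$, together with the automatic left-exactness) is literally the same statement as $\Hom_R(K_R,-)$-exactness of the short exact sequence $0\to M_i\to K_R^{\oplus n_i}\to M_{i-1}\to 0$, and dually for $\Hom_R(-,K_R)$. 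The only thing that genuinely needs checking is that the $M_i$ are modules, and this is clear in both directions: going forward they are $G$-Gorenstein modules by the proof of Theorem~\ref{dcomplete} (and Corollary~\ref{firstformodule}), and going backward they are kernels of maps of modules.
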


\end{document}